\def\be{\begin{eqnarray}}
\def\ee{\end{eqnarray}}
\def\bea{\begin{eqnarray*}}
\def\eea{\end{eqnarray*}}
\def\na{\nabla}
\def\d{\delta}
\def\vp{\varphi}
\def\a{\alpha}
\def\n{\nabla}
\def\o{\omega}
\def\tr{\Delta}
\def\dis{\displaystyle}
\newtheorem{defi}{Definition}[section]
\newtheorem{lem}[defi]{Lemma}
\newtheorem{remark}[defi]{Remark}
\newtheorem{thm}[defi]{Theorem}
\newtheorem{prop}[defi]{Proposition}
\newtheorem{cor}[defi]{Corollary}
\numberwithin{equation}{section}
\begin{document}

\title[ On the structure of  Closed generalized Einstein manifolds]
{On the structure of  closed generalized Einstein manifolds and  rigidity properties}

\author[S. Hwang]{Seungsu Hwang}

\author[M. Santos]{Marcio Santos}

\author[G. Yun]{Gabjin Yun$^*$}


\thanks{$*$ Corresponding author}
\thanks{The first author was supported by the Basic Science Research Program through the National Research Foundation of Korea(NRF)  funded by the Ministry of Education(NRF-2018R1D1A1B05042186), and the third author by the Ministry of Education(RS-2024-00334917).}

\keywords{generalized  ($\lambda, n+m)$-Einstein manifold, positive isotropic curvature, 
sphere rigidity,  harmonic  Weyl curvature,  Einstein metric}
\subjclass{Primary 53C25; Secondary 58E11}

\begin{abstract}
In this paper, we show that a closed $n$-dimensional generalized $(\lambda, n+m)$-Einstein  manifold  of constant scalar curvature  is isometric to either  a sphere  ${\Bbb S}^n$, or a product ${\Bbb S}^{1} \times \Sigma^{n-1}$ of a circle with an $(n-1)$-dimensional Einstein manifold of positive Ricci curvature,  up to finite cover and rescaling 
when $\omega = df  \wedge i_{\na f}z = 0$.
Furthermore, if we assume $(M, g)$ has positive isotropic curvature, $M$  must be isometric to either
  a sphere  ${\Bbb S}^n$, or a product ${\Bbb S}^{1} \times {\Bbb S}^{n-1}$ of a circle with an  $(n-1)$-sphere.
 \end{abstract}

\maketitle

\setlength{\baselineskip}{15pt}

\section{Introduction}

A closed generalized $(\lambda, n+m)$-Einstein manifold is a triple $(M^n, g, f)$, where $(M^n, g)$ is
a closed $n$-dimensional Riemannian manifold and $f$ is a smooth function on $M$
satisfying
\be
Ddf = \frac{f}{m} ({\rm Ric} - \lambda g).\label{eqn1}
\ee
 Furthermore, $\lambda$ is a smooth function on $M$ and $m$ is a positive real number.
In particular,  $(M, g, f)$ will be called a $(\lambda, n+m)$-Einstein manifold if $\lambda$ is constant.

The  motivation to approach this type of manifolds is studying Einstein manifolds that have a structure of warped product. In fact, if $m>1$ is an integer and $f>0$, it is known \cite{kim2} that
$(M^n,g,f)$ is a $(\lambda,n+m)$-Einstein manifold  if and only if there is a smooth $(n+m)$-dimensional warped product Einstein manifold having $M$ as the base space.  
We also observe that if we define the function $\phi$  by $e^{-\frac{\phi}{m}}=f$,  the equation \eqref{eqn1} becomes
$$
{\rm Ric}_\phi^m:={\rm Ric}+Dd \phi-\frac{d\phi\otimes d\phi}{m}=\lambda g.
$$
Here, the tensor ${\rm Ric}_\phi^m$  is the well-known $m$-Bakry-\'Emery Ricci tensor.  Taking $m\to\infty$, we get the gradient (generalized) Ricci soliton equation
$${\rm Ric}+Dd\phi=\lambda g.$$

A  generalized $(\lambda, n+m)$-Einstein manifold $(M^n, g, f)$ is called {\it trivial} 
if the potential function $f$ is constant, and in this case $(M, g)$ is Einstein.  
We point out that,  for a closed  $(\lambda, n+m)$-Einstein manifold $(M^n, g, f)$ 
having constant scalar curvature, if 
 $f$ does not change sign on $M$ and $\lambda$ is constant,  it follows from the maximum
  principle that $(M^n, g, f)$ is trivial.  On the other hand,  if $\lambda$ is non-constant, 
  a closed generalized $(\lambda, n+m)$-Einstein manifold with constant scalar curvature 
  and $f>0$ is Einstein and isometric to a sphere, see  \cite{b-g}.
Motivated by these results,  from now on, we always assume that 
$$
\min_M f :=a<0 < b:=\max_M f
$$
so that $f^{-1}(0)$ is a non-empty set whenever we consider closed generalized 
$(\lambda, n+m)$-Einstein manifolds.

In this paper, we consider closed generalized $(\lambda, n+m)$-Einstein manifolds $(M, g, f)$ satisfying 
$\o:=df \wedge i_{\n f}z_g = 0$, where $z_g$ is the traceless Ricci curvature tensor defined by $z_g = {\rm Ric}- \frac{Scal_g}{n}g$ and  $i_{\n f}z$ denotes the interior product. We can see that the vanishing of $\o$ is equivalent to $z_g(\n f, X) = 0$ for any vector field $X$ orthogonal to $\n f$ since vacuum static space have constant scalar curvature.

Our main result is the following.

\begin{thm}
Let $(M^n, g, f)$ be a closed generalized $(\lambda, n+m)$-Einstein manifold with $f^{-1}(0) \ne \emptyset.$
Suppose that $(M, g)$ has  constant scalar curvature and $\o=df \wedge i_{\n f}z = 0$.  Then, up to finite cover and rescaling, we have the following.
\begin{itemize}
\item[(1)]
If $f^{-1}(0)$ is connected, then  $(M, g)$ is isometric to a sphere ${\Bbb S}^n$.
\item[(2)] If $f^{-1}(0)$ is disconnected, then $f^{-1}(0)$ has exactly two connected components and $(M, g)$ is isometric  to the product ${\Bbb S}^1 \times {\Sigma}^{n-1}$. Here $\Sigma$ is an $(n-1)$-dimensional Einstein manifold of positive Ricci curvature.
\end{itemize} 
\end{thm}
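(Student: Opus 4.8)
The plan is to exploit the hypersurface $\Sigma := f^{-1}(0)$ together with the structure equation \eqref{eqn1} restricted to $\Sigma$. First I would observe that on $\Sigma$ the Hessian equation \eqref{eqn1} forces $Ddf = 0$ along $\Sigma$ in the tangential directions, which in turn shows that $\Sigma$ is a totally geodesic hypersurface and that $|\na f|$ is a nonzero constant on each connected component of $\Sigma$ (here one uses that $\Sigma$ is a regular level set, which follows because $\na f \ne 0$ on $\Sigma$: if $\na f$ vanished at a point of $\Sigma$ then \eqref{eqn1} would make that point a critical point of the ODE for $f$ along geodesics, forcing $f \equiv 0$). Then the unit normal $N = \na f/|\na f|$ along $\Sigma$ is parallel, so a neighbourhood of $\Sigma$ splits as a Riemannian product $\Sigma \times (-\e,\e)$. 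The key point is that PIC is an \emph{open} condition on four-frames and, crucially, a Riemannian product $N^{k}\times \mathbb R$ has PIC if and only if $N$ has positive curvature operator restricted to... — more precisely, one uses the classical fact (Micallef--Wang, or Brendle) that if $M$ has PIC and contains a totally geodesic hypersurface whose normal bundle admits a parallel section, then $M$ is covered by $\Sigma \times \mathbb R$ and $\Sigma$ carries a metric of PIC in dimension $n-1$; iterating or invoking the classification in low dimension is \emph{not} what we want here, so instead I would push the product structure globally.

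Second, I would upgrade the local splitting near $\Sigma$ to a global one. Since $f$ satisfies along the integral curves of $\na f$ an ODE $f'' = \tfrac{f}{m}(\text{something})$ and scalar curvature is constant, one controls $|\na f|^2$ as a function of $f$ via a first-integral identity (the standard Bochner-type computation $\Delta |\na f|^2 = \dots$ combined with constant scalar curvature yields $|\na f|^2 = \psi(f)$ for an explicit smooth $\psi$ with $\psi(0) > 0$). This means the only critical points of $f$ are the maximum set $f^{-1}(b)$ and the minimum set $f^{-1}(a)$, and between them the gradient flow of $f$ exhibits $M$ as a "doubled" object over $\Sigma$. If $\Sigma$ is connected, the gradient flow forces $M \setminus (f^{-1}(a) \cup f^{-1}(b))$ to be $\Sigma \times I$ with $\Sigma$ a single hypersurface that caps off on both ends, giving a topological sphere; combined with the product structure transverse to $\Sigma$ and PIC one concludes $(M,g)$ is, up to finite cover and rescaling, the round $\mathbb S^n$ — here I would invoke that a closed manifold with PIC which is also a warped product over an interval with totally geodesic fiber and constant scalar curvature must have constant curvature (the warping function solves an ODE whose only solution compatible with smooth closing and PIC is $\sin$). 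If $\Sigma$ is disconnected, I would show each component is a separating hypersurface; since $f$ is monotone along gradient flow lines and $f^{-1}(0)$ separates $\{f<0\}$ from $\{f>0\}$, there can be at most... — actually exactly two components, because $f$ increases from $a$ to $b$ and each crossing of level $0$ along a flow line hits a distinct component, so $M$ is two copies of a "half" glued along $\Sigma \times \{0,1\}$, i.e. $\Sigma \times \mathbb S^1$, and PIC plus constant scalar curvature forces $\Sigma = \mathbb S^{n-1}$ round.

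Third, I would handle the identification of $\Sigma$ itself. In case (1), after establishing that $M$ is a PIC manifold with a totally geodesic hypersurface admitting a parallel normal, Frankel-type or synthetic arguments give that $M$ is simply connected (PIC forces $\pi_1$ finite by Micallef--Moore), hence the finite cover is the sphere and the warped-product ODE analysis pins the metric as round. In case (2), $\Sigma$ inherits (from being totally geodesic in a product and from the Gauss equation with vanishing second fundamental form) a metric with PIC in dimension $n-1$, and moreover the restriction of \eqref{eqn1} shows $\Sigma$ is itself Einstein (the tangential part of $\Ric - \lambda g$ is proportional to the tangential Hessian of $f$, which vanishes on $\Sigma$); a closed Einstein manifold of dimension $n-1 \ge 3$ with PIC is the round sphere by the classification of Einstein PIC manifolds (Brendle, Seshadri), and the low-dimensional cases $n-1 = 2,3$ are classical. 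Thus $(M,g) = \mathbb S^1 \times \mathbb S^{n-1}$ up to finite cover and rescaling.

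The main obstacle I expect is making the transition from the \emph{local} product splitting near $\Sigma$ to a \emph{global} structural description of $(M,g)$, i.e. ruling out the possibility that $f$ has a complicated critical set or that the gradient flow reglues $\Sigma$ in a nontrivial way; this is exactly where one must use constant scalar curvature decisively, to turn \eqref{eqn1} into an honest ODE system along gradient flow lines with the first integral $|\na f|^2 = \psi(f)$, and then argue that $\psi$ has simple zeros at $f = a, b$ so that $f^{-1}(a), f^{-1}(b)$ are either points (case 1) or totally geodesic hypersurfaces (case 2). A secondary subtlety is that the theorem only claims the conclusion \emph{up to finite cover}, so I do not need $M$ itself to be the sphere or $\mathbb S^1\times\mathbb S^{n-1}$ on the nose — it suffices to identify the universal (or a finite) cover, which is consistent with PIC only controlling $\pi_1$ up to finiteness; I would be careful to state the rescaling explicitly since the ODE for the warping/gradient produces a metric round only after normalizing the constant $\psi(0)$.
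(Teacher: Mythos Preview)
Your proposal has a genuine gap at its very first structural step, and that gap propagates through everything afterward.

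The claim that ``the unit normal $N=\nabla f/|\nabla f|$ along $\Sigma$ is parallel, so a neighbourhood of $\Sigma$ splits as a Riemannian product $\Sigma\times(-\epsilon,\epsilon)$'' is false. What you have is $D_XN=0$ for $X$ tangent to $\Sigma$, which is exactly the totally geodesic condition; it says nothing about $D_NN$ or about how the metric behaves transversally. A totally geodesic hypersurface does \emph{not} force a local Riemannian product (think of an equator in a round sphere). There is no ``classical fact (Micallef--Wang, or Brendle)'' saying that a PIC manifold with a totally geodesic hypersurface splits; no such splitting theorem exists.

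More seriously, your claim that constant scalar curvature alone yields a first integral $|\nabla f|^2=\psi(f)$ is unjustified and is precisely where the paper's real work lies. For $X\perp\nabla f$ one computes from \eqref{eqn5} that $X(|\nabla f|^2)=2Ddf(X,\nabla f)=\tfrac{2f}{m}\,z(X,\nabla f)$, and there is no reason for this to vanish away from $f^{-1}(0)$ unless you know that the traceless Ricci tensor is \emph{radially flat}, i.e.\ $z(X,\nabla f)=0$ for $X\perp\nabla f$. The paper encodes this condition as the vanishing of the $2$-form $\omega=df\wedge i_{\nabla f}z$ and proves $\omega=0$ by a Bochner--Weitzenb\"ock argument for $2$-forms, using PIC to make the curvature term $\langle E(\omega),\omega\rangle$ strictly positive. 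This is the only place PIC is used to obtain the structural rigidity; your plan defers PIC to the endgame (identifying $\Sigma$), but without $\omega=0$ you never get the ODE picture, the absence of interior critical points, the constancy of $\alpha=z(N,N)$, or the eventual warped-product/product form of $g$.

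In short: the paper's route is \emph{PIC $\Rightarrow$ $\omega=0$ $\Rightarrow$ $|\nabla f|$ and $\alpha$ depend only on $f$ $\Rightarrow$ no interior critical points $\Rightarrow$ $T=0$ and harmonic Weyl $\Rightarrow$ explicit warped/product structure}. Your route tries to jump to the last step using only the totally geodesic property of $f^{-1}(0)$ and a hoped-for first integral, neither of which suffices. To repair the argument you would need an independent mechanism to force $z(\nabla f,\cdot)\parallel df$, and that is exactly the content of the $\omega$-vanishing theorem you are missing.
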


Under  a little stronger condition than $\o=df \wedge i_{\n f}z = 0$, we have the following.

\begin{cor}
Let $(M^n, g, f)$ be a closed generalized $(\lambda, n+m)$-Einstein manifold of constant scalar curvature
with $f^{-1}(0) \ne \emptyset.$
If $z(\n f, \n f) = 0$ on $M$, then $(M, g)$ is isometric to a sphere ${\Bbb S}^n$.

\end{cor}

For a  geometric equation like gradient Ricci soliton equation, many authors studied rigidity result under the hypothesis of vanishing 
Ricci curvature or trace-less Ricci curvature in the direction of the gradient of potential function.
For example, in \cite{p-w},  Petersen and Wylie proved that any compact gradient Ricci soliton $(M^n, g)$ satisfying
${\rm Ric} + Ddf = \lambda g$  is rigid if the scalar curvature is constant and ${\rm Ric}(\n f, \n f) = 0$.
Recently, in \cite{b-b-b-v}, Baltazar et al. proved rigiditys result for static vacuum spaces with zero radially Weyl curvature which means 
$i_{\n f}\mathcal W = \mathcal (\n f, \cdot, \cdot, \cdot) = 0$, where $\mathcal W$ denotes the Weyl curvature tensor.

\vspace{.1in}
Next, we would like to point out that, when $(M^n, g, f)$ is a closed $(\lambda, n+m)$-Einstein manifold, that is, $\lambda$ is a constant function, the condition $\omega=0$ is equivalent to constant scalar curvature (see Lemma \ref{constantscalar}). We can also easily show that there are no critical points of $f$ on the set $f^{-1}(0)$ and each connected component of $f^{-1}(0)$ is totally geodesic (see Section 2). 
We can also show that the  vanishing of $\o =0$  implies 
the potential function $f$ does not have critical points except at minimum and  maximum points.

Based on these properties on the potential function  for  a closed $(\lambda, n+m)$-Einstein manifold, we can also show the following.

\begin{thm}
Let $(M^n, g, f)$ be a closed generalized $(\lambda, n+m)$-Einstein manifold with $f^{-1}(0) \ne \emptyset.$
Suppose that $(M, g)$ has  constant scalar curvature and $\o=df \wedge i_{\n f}z = 0$. If the minimum set of $f$ is a hypersurface in $M$, then $(M^n, g)$ has harmonic Weyl curvature and  Bach-flat.

\end{thm}


 If, furthermore,  $(M, g)$  has positive isotropic curvature, then $\Sigma$ can be shown to be a round sphere ${\Bbb S}^{n-1}$.

\begin{thm}
Let $(M^n, g, f), n \ge 4,$ be a closed generalized $(\lambda, n+m)$-Einstein manifold with $f^{-1}(0) \ne \emptyset.$
Suppose that $(M, g)$ has   positive isotropic curvature and  constant scalar curvature with
$\o=df \wedge i_{\n f}z = 0$.  If $f^{-1}(0)$ is disconnected, then, up to finite cover and rescaling,  $(M, g)$ is isometric  to the product ${\Bbb S}^1 \times {\Bbb S}^{n-1}$. 
\end{thm}

We recall that a Riemannian manifold $M$ has positive isotropic curvature  (PIC in short) if and only if,  for every orthonormal four-frame $ \{e_1,e_2,e_3,e_4\}$,  we have that 
$$
R_{1313}+R_{1414}+R_{2323}+R_{2424}-2R_{1234}>0.
$$
The positive isotropic curvature was first introduced by Micalleff and Moore \cite{mm88}  in consideration of the second variation of energy of maps from surfaces into $M.$ 
It is easy to see that a standard sphere ${\Bbb S}^n$ and a product
${\Bbb S}^{n-1}\times {\Bbb S}^1$ or ${\Bbb S}^{n-1}\times {\Bbb R}$ have PIC.
It is also well-known that if the sectional curvature of a Riemannian manifold  is 
pointwise strictly quater-pinched, then it has PIC, and the connected sum of manifolds with PIC
admits a PIC metric (see \cite{mm88} and \cite{m-w}). 
This condition was also used by Brendle and Schoen  to prove the celebrated differentiable pointwise $1/4$-pinching sphere theorem, see \cite{BS}.

The paper is organized as follows. In Section 2, we describe basic properties on generalized $(\lambda, n+m)$-Einstein manifolds and show a rigidity result of them when $\lambda =1$.
In Section 3, we introduce a $3$-tensor and derive its properties and relations with the Cotton tensor and Bach tensor. We also show some useful facts on a special $2$-form $\omega:= df\wedge i_{\n f}z$ for a generalized $(\lambda, n+m)$-Einstein manifold $(M^n, g, f)$.
In Section 4, we prove  the second part of Theorem 1.1 and other Theorems (Corollary 1.2, Theorem 1.3 and Theorem 1.4) mentioned above. 
The first part of Theorem 1.1 is proved in  Section 5, and in Section 6, we add some remarks on  the relation of scalar curvature with $\lambda$ for generalized $(\lambda, n+m)$-Einstein manifolds.

\vskip .5pc

\noindent
{\bf Convention and  Notations: }  Basically, we follow curvature conventions and operator conventions in \cite{Be} except only  the Laplace operator. Hereafter, for convenience and simplicity, we denote curvatures 
${\rm Ric}_g,  z_g,   {\rm Scal}_g$, and   the Hessian and Laplacian of $f$, $D_gdf, \Delta_g$  by
$r, z, s$, and $Ddf, \Delta$, respectively, if there are no ambiguities.
We also use the notation $\langle \,\, ,\,\, \rangle$ for metric $g$ or inner product induced by $g$ on tensor spaces.

\section{Basic Properties}

In this section, we give some basis properties on closed generalized $(\lambda, n+m)$-Einstein manifolds. Before doing this, we first enumerate  equivalent equations 
to (\ref{eqn1}) and basic identities which are used later.

\vspace{.13in}
\noindent
{\bf Basic Identities:} 
\begin{itemize}
\item[(i)] Taking the trace of (\ref{eqn1}), we have
\be
\Delta f = \frac{s - n\lambda}{m} f.\label{eqn2}
\ee
\item[(ii)] Denoting $\dis{\mathring{Ddf} = Ddf- \frac{\Delta f}{n}g}$, the equation (\ref{eqn1}) is reduced to
\bea
f z = m \mathring {Ddf}.\label{eqn3}
\eea
\item[(iii)] Since $\dis{\frac{\lambda f}{m} = \frac{1}{n}\left(\frac{sf}{m} - \Delta f \right)}$ in (\ref{eqn2}), we can express
 (\ref{eqn1}) as
\bea
Ddf = \frac{f}{m} r - \frac{1}{n} \left(\frac{sf}{m} - \Delta f \right) g. \label{eqn4}
\eea
\item[(iv)] Using $z = r - \frac{s}{n}g$, the equation (\ref{eqn1}) becomes
\be
fz = m Ddf + \left(\lambda - \frac{s}{n}\right) f g = m Ddf - \frac{m}{n} (\Delta f) g.\label{eqn5}
\ee
\item[(v)] Letting $\mu:= \lambda f - \frac{s}{n-1} f$, from (\ref{eqn5}), we have
\be
fz =mDdf + \left(\frac{sf}{n(n-1)} + \mu\right)g\label{eqn16}
\ee
and, by taking the trace of (\ref{eqn16}), we obtain
\be
\Delta f = - \frac{s}{m(n-1)}f -\frac n{m}\mu.\label{eqn2025-3-16-1}
\ee
\end{itemize}

\begin{lem}\label{lem2021-5-15-1}
Let $(M^n, g, f)$ be a  closed generalized $(\lambda, n+m)$-Einstein manifold
with $f^{-1}(0) \ne \emptyset$.
 Then,  the set $\{f>0\}\cap \{\lambda \ge \frac{s}{n}\}$ is nonempty.
\end{lem}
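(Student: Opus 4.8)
The plan is to prove the lemma by a single application of the maximum principle to the potential function $f$, using only the trace identity \eqref{eqn2}. Since we are assuming throughout that $a=\min_M f<0<\max_M f=:b$, the open set $\{f>0\}$ is nonempty; in particular, any point $x_0\in M$ at which $f$ attains its global maximum satisfies $f(x_0)=b>0$, so that $x_0\in\{f>0\}$.

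First I would evaluate \eqref{eqn2} at such a maximum point $x_0$. As $x_0$ is a global maximum of $f$, the Hessian $Ddf$ is negative semidefinite there, hence $\Delta f(x_0)=\operatorname{tr}\bigl(Ddf\bigr)(x_0)\le 0$. Substituting this into the identity $\Delta f=\frac{f}{m}(s-n\lambda)$ of \eqref{eqn2} yields $\frac{b}{m}\bigl(s(x_0)-n\lambda(x_0)\bigr)\le 0$; since $b>0$ and $m>0$, this forces $s(x_0)\le n\lambda(x_0)$, i.e. $\lambda(x_0)\ge\frac{s(x_0)}{n}$. Therefore $x_0$ belongs to $\{f>0\}\cap\{\lambda\ge\frac{s}{n}\}$, and this set is nonempty, as claimed.

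I do not expect a genuine obstacle here: the argument is a one-point maximum principle. The only items requiring (minor) care are the sign convention for $\Delta$ — it denotes the trace of the Hessian, so that $\Delta f\le 0$ at a maximum — and the fact that $f$ does attain a strictly positive maximum, which is exactly the content of the standing hypothesis $a<0<b$ (this is needed, since $f^{-1}(0)\ne\emptyset$ alone would a priori allow $f\le 0$, in which case $\{f>0\}$ could be empty). I should also remark that the weaker integral identity $\int_M f\bigl(\lambda-\frac{s}{n}\bigr)=0$, obtained by integrating \eqref{eqn2} over the closed manifold, does not by itself locate a point of $\{f>0\}$ at which $\lambda\ge\frac{s}{n}$ — it only shows that $\lambda-\frac{s}{n}$ changes sign on $M$ — which is why the pointwise argument at the maximum of $f$ is the appropriate tool.
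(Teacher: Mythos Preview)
Your argument is correct and, in fact, more direct than the paper's. You evaluate the trace identity $\Delta f=\frac{f}{m}(s-n\lambda)$ at a global maximum point $x_0$ of $f$ and read off $\lambda(x_0)\ge \frac{s(x_0)}{n}$ from $\Delta f(x_0)\le 0$ and $f(x_0)=b>0$; this exhibits an explicit point in the desired intersection. The paper instead argues by contradiction via an integral identity: assuming $\{f>0\}\cap\{\lambda\ge \frac{s}{n}\}=\emptyset$, it multiplies \eqref{eqn2} by $f$, integrates by parts over $\{f\ge 0\}$ (the boundary term vanishes since $f=0$ there), and obtains $\int_{f\ge 0}|\nabla f|^2=\frac{n}{m}\int_{f\ge 0}(\lambda-\frac{s}{n})f^2\le 0$, forcing $\nabla f\equiv 0$ on $\{f\ge 0\}$ and hence $f\equiv 0$ there, a contradiction. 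Your pointwise maximum-principle route avoids the integration-by-parts step entirely and is the cleaner proof here; the paper's integral approach, on the other hand, is the template that is reused (with modifications) in later arguments such as Proposition~\ref{prop2021-4-19-18} and Corollary~\ref{cor2021-5-15-7}, where integrating over $\{f\ge 0\}$ against suitable test factors is the natural move.
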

\begin{proof}
Suppose, on the contrary, that $\{f>0\}\cap \{\lambda \ge \frac{s}{n}\} = \emptyset$.
 Multiplying (\ref{eqn2}) by $f$ and integrating it over  the set $f\ge 0$, we obtain
$$
\int_{f\ge 0} |\n f|^2 =  \frac{n}{m} \int_M \left(\lambda - \frac{s}{n}\right) f^2 \le 0,
$$
which shows that $\n f = 0$ and so $f=0$ on the set $\{f >0\}$, a contradiction.
\end{proof}
The same argument in the proof of Lemma~\ref{lem2021-5-15-1} shows that
$$
\{f<0\}\cap \{\lambda \ge \frac{s}{n}\}\ne \emptyset.
$$

\begin{lem} \label{prop2021-5-15-2}
Let $(M^n, g, f)$ be a closed generalized $(\lambda, n+m)$-Einstein manifold with $f^{-1}(0) \ne \emptyset$.
 Then, there are no critical points of $f$ on the set $f^{-1}(0)$.
 \label{prop1}
\end{lem}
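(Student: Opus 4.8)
The plan is to argue by contradiction: suppose $p \in f^{-1}(0)$ is a critical point, so $f(p) = 0$ and $\nabla f(p) = 0$. The key is that at such a point the defining equation \eqref{eqn1}, namely $Ddf = \frac{f}{m}(r - \lambda g)$, degenerates completely — the right-hand side vanishes because $f(p) = 0$. Hence at $p$ we have simultaneously $f(p) = 0$, $\nabla f(p) = 0$, and $Ddf(p) = 0$, i.e. $f$ vanishes to second order at $p$.

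\medskip
\noindent
The next step is to upgrade this to a statement forcing $f \equiv 0$ near $p$, contradicting $f^{-1}(0) \ne \emptyset$ being a proper subset (recall we assume $\min_M f = a < 0 < b = \max_M f$). The cleanest route is to set up a linear second-order elliptic equation (or ODE along geodesics) that $f$ satisfies and invoke a unique continuation / Aronszajn-type argument. Concretely, rewrite \eqref{eqn1} as $Ddf - \frac{f}{m}(r - \lambda g) = 0$; this is a linear elliptic system in $f$ with smooth coefficients, and along any unit-speed geodesic $\gamma$ emanating from $p$ the function $h(t) = f(\gamma(t))$ satisfies the second-order linear ODE $h''(t) = \frac{1}{m}\big(r(\dot\gamma,\dot\gamma) - \lambda\big)\, h(t)$ obtained by restricting $Ddf$ to $\dot\gamma \otimes \dot\gamma$ (using that $Ddf(\dot\gamma,\dot\gamma) = \frac{d^2}{dt^2}(f\circ\gamma)$ since $\gamma$ is a geodesic). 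With initial conditions $h(0) = 0$, $h'(0) = \langle \nabla f(p), \dot\gamma(0)\rangle = 0$, uniqueness for linear ODEs forces $h \equiv 0$, hence $f \equiv 0$ on a whole normal neighborhood of $p$ — and then by connectedness $f \equiv 0$ on $M$, contradicting the sign-change hypothesis.

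\medskip
\noindent
I expect the main (minor) obstacle to be purely bookkeeping: making sure the geodesic argument is applied on a neighborhood where geodesics from $p$ fill out an open set (which is automatic in a normal ball), and then propagating $f \equiv 0$ across all of $M$ — for this one can either use a connectedness/openness argument on the set $\{f = 0,\ \nabla f = 0\}$ (showing it is open and closed) or simply observe that the set where $f$ vanishes to infinite order is open and closed. Either way the conclusion is that $f$ cannot have a critical point on its zero set, so the proof is short; the only thing to be careful about is invoking the right regularity of $\lambda$ and $r$ so that the ODE coefficients are continuous, which holds since $\lambda$ is assumed smooth and $g$ is smooth.
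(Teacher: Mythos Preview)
Your proposal is correct and follows essentially the same approach as the paper: restrict the defining equation to a unit-speed geodesic through the putative critical point $p$, obtain a homogeneous linear second-order ODE for $h(t)=f(\gamma(t))$ with vanishing initial data, and conclude $f\equiv 0$ by uniqueness. The paper writes the ODE using the traceless Ricci tensor $z$ via (\ref{eqn5}) rather than $r$, but this is just a trivial rewriting, and your extra care about propagating $f\equiv 0$ to all of $M$ via connectedness (or Hopf--Rinow on a closed manifold) only makes the argument more complete.
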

\begin{proof}
Suppose that there is a critical point  $p\in f^{-1}(0)$ of $f$. Let $\gamma$ be a unit-speed
geodesic starting at $p$ and define $\vp(t)=f\circ \gamma(t)$.
From (\ref{eqn5}), we have
$$
 \vp''(t)= \frac{1}{m} \left[z (\gamma'(t), \gamma'(t))  - \left(\lambda - \frac {s}{n}\right)\right] \vp(t)
$$
with $\vp(0)=0$ and $\vp'(0)=0$. So, it follows from the uniqueness of ODE solution that $\vp$ vanishes identically, which is a contradiction.
\end{proof}

We can easily deduce  from Lemma~\ref{prop2021-5-15-2} that any connected component of the set $f^{-1}(0)$ is a hypersurface of $M$.
Moreover, it is easy to see that  $|\n f|^2$ is a positive constant on each connected component of  $f^{-1}(0)$. 
In fact, if $X$ is  tangent  to (a connected component of) $f^{-1}(0)$, by (\ref{eqn5}), we have
$X(|\n f|^2) = 0$.  Furthermore, we have the following.

\begin{lem}\label{lem202-5-15-3}
Let $(M^n, g, f)$ be a closed generalized $(\lambda, n+m)$-Einstein manifold
with $f^{-1}(0) \ne \emptyset$.
Then, any connected component of $f^{-1}(0)$ is a totally geodesic hypersurface in $M$.
\end{lem}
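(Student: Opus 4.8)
The plan is to show that along any connected component $N$ of $f^{-1}(0)$ the second fundamental form vanishes. By Lemma~\ref{prop2021-5-15-2}, $N$ is a regular level set, so $\n f$ is a nowhere-vanishing normal field along $N$ and $\nu := \n f/|\n f|$ is a unit normal. The second fundamental form of $N$ (with respect to $\nu$) is, up to the positive factor $|\n f|$, the restriction of $Ddf$ to vectors tangent to $N$: indeed for $X, Y$ tangent to $N$ one has $Ddf(X,Y) = \la \n_X \n f, Y\ra = |\n f|\,\la \n_X \nu, Y\ra$, since the extra term involving $X|\n f|^{-1}$ dies against $Y \perp \n f$. Hence it suffices to prove that $Ddf(X,Y) = 0$ for all $X, Y$ tangent to $N$.

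The key is equation \eqref{eqn5}, $fz = mDdf - \frac{m}{n}(\D f)g$. Evaluating on vectors $X, Y$ tangent to $N$ and using $f = 0$ on $N$ gives
\[
0 = m\,Ddf(X,Y) - \frac{m}{n}(\D f)\,\la X, Y\ra .
\]
Taking $X = Y$ a unit tangent vector yields $Ddf(X,X) = \frac{1}{n}\D f$ for every unit $X \perp \n f$; equivalently the restriction of $Ddf$ to $TN$ is the pure-trace tensor $\frac{1}{n}(\D f)\,g|_{TN}$. So $N$ is at least umbilic, with mean curvature $\tfrac1{|\n f|}\cdot\tfrac1n\D f$ (times $n{-}1$). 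To upgrade umbilic to totally geodesic I would compute $\D f$ on $N$. From \eqref{eqn2}, $\D f = \frac{f}{m}(s - n\lambda)$, which vanishes identically on $f^{-1}(0)$ since $f = 0$ there. Plugging $\D f = 0$ back into the displayed identity forces $Ddf(X,Y) = 0$ for all $X, Y$ tangent to $N$, i.e. the second fundamental form of $N$ vanishes, so $N$ is totally geodesic.

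The only point needing a little care — and the likely "obstacle," though a mild one — is the clean identification of the second fundamental form with $Ddf|_{TN}$ up to the scalar $|\n f|$; this rests on $N$ being a regular hypersurface (Lemma~\ref{prop2021-5-15-2}) together with the fact, already noted in the text, that $|\n f|$ is constant on $N$, so that tangential derivatives of $|\n f|$ do not intrude. Once that normalization is in place, the argument is just the two evaluations of \eqref{eqn5} and \eqref{eqn2} on $N$. I would also remark that this gives a second, cleaner route: \eqref{eqn3}, $fz = m\,\mathring{Ddf}$, shows $\mathring{Ddf} = 0$ on $N$ (umbilicity) directly, and then $\D f = 0$ on $N$ from \eqref{eqn2} closes the gap to total geodesy. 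Either packaging yields the statement.
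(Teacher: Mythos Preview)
Your proof is correct and follows essentially the same route as the paper: evaluate \eqref{eqn5} and \eqref{eqn2} at points of $f^{-1}(0)$ to obtain $Ddf=0$ there, whence the second fundamental form $\tfrac{1}{|\n f|}Ddf|_{T f^{-1}(0)}$ vanishes. The paper packages the conclusion as $D_{e_i}(\n f/|\n f|)=0$ using the constancy of $|\n f|$ on the level set, but the underlying computation is identical to yours (and your write-up is in fact more explicit about the role of $\Delta f=0$).
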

\begin{proof}
We can take  $N = \frac{\n f}{|\n f|}$ as a unit normal vector field on (a component of)
$f^{-1}(0)$ by Lemma~\ref{prop2021-5-15-2}. 
Choosing a local frame $\{N, e_2, e_3, \cdots, e_{n}\}$ 
so that $\{e_2, e_3, \cdots, e_{n}\}$ are tangent to $f^{-1}(0)$, we have
$e_i (|\n f|) = e_i \left(\frac{1}{|\n f|}\right) = 0$ on the set $f^{-1}(0)$, which implies 
$D_{e_i}N = 0$.
\end{proof}

It is well-known that the following identities  hold for a Riemannian manifold  in general.
\be
\d r = - \frac{1}{2} ds \quad \mbox{and}\quad \d z = - \frac{n-2}{2n}ds.\label{eqn2020-12-2-6}
\ee
Here $\d = {\rm -div}$ denotes the negative divergence operator.

\begin{lem}\label{lem2021-4-19-100}
Let $(M^n, g, f)$ be a generalized $(\lambda, n+m)$-Einstein manifold.  
Let $\mu:= \lambda f - \frac{s}{n-1} f$. Then we have
\be
(m-1)i_{\n f}z  + \frac{m-1}{n} s df = \frac{1}{2}f ds + (n-1) d\mu.\label{eqn2021-4-19-15}
\ee
\end{lem}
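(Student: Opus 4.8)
The plan is to derive the identity \eqref{eqn2021-4-19-15} by taking the divergence (equivalently, applying $\delta$) of the tensor equation \eqref{eqn16} and then using the basic identities \eqref{eqn2020-12-2-6} together with the standard commutation between the Hessian and the Ricci tensor. Starting from
$$
fz = m\,Ddf + \left(\frac{sf}{n(n-1)} + \mu\right)g,
$$
I would apply $\delta = -\mathrm{div}$ to both sides. The left-hand side expands by the Leibniz rule into $f\,\delta z - i_{\n f}z$, and by \eqref{eqn2020-12-2-6} the term $f\,\delta z$ becomes $-\frac{n-2}{2n}f\,ds$. For the right-hand side, $\delta(Ddf)$ is computed via the Bochner-type commutation formula $\delta(Ddf) = -d(\Delta f) - r(\n f, \cdot)$ (with the sign conventions in force here), and $\delta$ of a function times the metric, $\delta(\psi g) = -d\psi$, applies to the scalar term. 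Assembling these pieces should yield a first-order identity relating $i_{\n f}z$, $i_{\n f}r$, $d(\Delta f)$, $f\,ds$, $s\,df$, and $d\mu$.

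The second step is to eliminate the unwanted terms $i_{\n f}r$ and $d(\Delta f)$. Here I would use $z = r - \frac{s}{n}g$, so that $i_{\n f}r = i_{\n f}z + \frac{s}{n}df$, converting the Ricci contraction back into a $z$-contraction plus a multiple of $df$. For $d(\Delta f)$, I would use the trace identity from item (v), namely $\Delta f = -\frac{s}{m(n-1)}f - \frac{n}{m}\mu$, and take its exterior derivative to express $d(\Delta f)$ in terms of $f\,ds$, $s\,df$, and $d\mu$. Substituting both of these back should, after collecting coefficients, collapse everything into precisely the claimed relation $(m-1)i_{\n f}z + \frac{m-1}{n}s\,df = \frac{1}{2}f\,ds + (n-1)d\mu$; the factor $m-1$ emerging in front of the $i_{\n f}z$ term reflects the cancellation of one copy of $i_{\n f}z$ coming from $\delta(fz)$ against the copy generated by rewriting $i_{\n f}r$.

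The main obstacle I anticipate is bookkeeping the sign and normalization conventions consistently — in particular getting the divergence formula for $\delta(Ddf)$ right (the Ricci identity $\mathrm{div}(Ddf) = d(\Delta f) + \Ric(\n f, \cdot)$, reinterpreted through $\delta = -\mathrm{div}$), and making sure the coefficient $-\frac{n-2}{2n}$ from $\delta z$ combines correctly with the $\frac{s}{n}$ terms so that the final $f\,ds$ coefficient is exactly $\frac{1}{2}$ and the $s\,df$ coefficient is exactly $\frac{m-1}{n}$. Once the conventions are pinned down, the computation is a routine but slightly delicate expansion; no geometric input beyond the defining equation and the universal identities \eqref{eqn2020-12-2-6} is needed, so the PIC and constant-scalar-curvature hypotheses play no role in this particular lemma.
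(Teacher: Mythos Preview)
Your proposal is correct and follows essentially the same route as the paper: apply $\delta$ to \eqref{eqn16}, expand via the Leibniz rule and the Bochner identity $\delta(Ddf) = -d(\Delta f) - i_{\n f}r$, then substitute $\delta z = -\tfrac{n-2}{2n}ds$, $i_{\n f}r = i_{\n f}z + \tfrac{s}{n}df$, and $\Delta f = -\tfrac{s}{m(n-1)}f - \tfrac{n}{m}\mu$ to collect terms. Your anticipated concern about sign conventions is well placed but your stated formulas are consistent with the paper's, so the bookkeeping goes through as you describe.
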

\begin{proof}
Taking the divergence operator $\d$  of (\ref{eqn16}), we obtain
\be
-i_{\n f}z + f \d z = m(-i_{\n f}r - d\Delta f) - \frac{s}{n(n-1)}df - \frac{f}{n(n-1)}ds - d\mu.
\label{eqn2021-4-19-11}
\ee
Recall that $\d z = - \frac{n-2}{2n} ds$ and $i_{\n f}r = i_{\n f}z + \frac{s}{n}df$  by (\ref{eqn2020-12-2-6}).
Substituting these into (\ref{eqn2021-4-19-11}) and using (\ref{eqn2025-3-16-1}), we obtain our conclusion.

\end{proof}

When $m=1$, for a closed  generalized $(\lambda, n+1)$-Einstein manifolds, we have the following.
 we have the following.

\begin{cor}\label{cor2021-4-19-12}
Let  $(M^n, g, f)$ be a closed generalized $(\lambda, n+1)$-Einstein manifold with $f^{-1}(0)\ne \emptyset$. 
If the scalar curvature $s$ is constant, then $s$ should be positive and $\lambda$ is also  a positive constant. In fact,  we have
$$
\lambda = \frac{s}{n-1}.
$$
\end{cor}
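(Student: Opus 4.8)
The plan is to exploit Lemma~\ref{lem2021-4-19-100} in the degenerate case $m=1$, where its entire left-hand side disappears. Substituting $m=1$ into (\ref{eqn2021-4-19-15}) gives
$$
0 = \frac{1}{2} f\, ds + (n-1)\, d\mu ,
$$
where $\mu = \lambda f - \frac{s}{n-1} f = \left(\lambda - \frac{s}{n-1}\right) f$. Since $s$ is assumed constant we have $ds = 0$, hence $d\mu = 0$; that is, $\mu$ is a constant function on $M$.

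Next I would determine the value of this constant. By hypothesis $f^{-1}(0) \ne \emptyset$, and at any point of $f^{-1}(0)$ one has $\mu = \left(\lambda - \frac{s}{n-1}\right)\cdot 0 = 0$, so $\mu \equiv 0$ on $M$; equivalently $\left(\lambda - \frac{s}{n-1}\right) f \equiv 0$. By Lemma~\ref{prop2021-5-15-2} the level set $f^{-1}(0)$ contains no critical points, hence it is a hypersurface and in particular has empty interior, so $\{f \ne 0\}$ is open and dense, and on it $\lambda = \frac{s}{n-1}$. As $\lambda$ and $\frac{s}{n-1}$ are smooth and agree on a dense set, we get $\lambda \equiv \frac{s}{n-1}$ on all of $M$; in particular $\lambda$ is constant.

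It then remains to show $s > 0$. Using $\mu \equiv 0$ (or, equivalently, inserting the constant value $\lambda = \frac{s}{n-1}$ into (\ref{eqn2})) gives
$$
\Delta f = -\frac{s}{n-1}\, f .
$$
Multiplying by $f$ and integrating over the closed manifold $M$, integration by parts yields
$$
\int_M |\nabla f|^2 = \frac{s}{n-1}\int_M f^2 .
$$
Because of the standing assumption $\min_M f < 0 < \max_M f$, the potential $f$ is non-constant, so the left-hand side is strictly positive, while $\int_M f^2 > 0$ and $n-1 > 0$; therefore $s > 0$, and consequently $\lambda = \frac{s}{n-1} > 0$.

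The computation is short, and the only spot requiring a moment's care is the passage from $\left(\lambda - \frac{s}{n-1}\right) f \equiv 0$ to the global identity $\lambda \equiv \frac{s}{n-1}$, which is where the structure of $f^{-1}(0)$ as a hypersurface (Lemma~\ref{prop2021-5-15-2}) is used; the genuinely decisive input — that $m=1$ collapses the identity of Lemma~\ref{lem2021-4-19-100} — costs nothing once that lemma is available.
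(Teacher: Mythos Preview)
Your proof is correct and follows essentially the same route as the paper: both set $m=1$ in Lemma~\ref{lem2021-4-19-100} to force $\mu$ constant, use a point of $f^{-1}(0)$ to pin $\mu=0$, and then read off $\lambda=\frac{s}{n-1}$ and the sign of $s$ from $\Delta f=-\frac{s}{n-1}f$. The only cosmetic differences are that you spell out the density argument for passing from $\mu\equiv 0$ to $\lambda\equiv\frac{s}{n-1}$ and that you obtain $s>0$ by integrating against $f$ rather than invoking the maximum principle.
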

\begin{proof}
Suppose that $s$ is constant. Letting $\mu= \lambda f - \frac{s}{n-1} f$, it follows from Lemma~\ref{lem2021-4-19-100} that $\mu$ is constant. Since $f^{-1}(0)\ne \emptyset$, we have $\mu = 0$ and so $\lambda  = \frac{s}{n-1}$, which shows that $\lambda $ is constant. 
By (\ref{eqn2}), we have
$$
\Delta f = sf - n \lambda f  = - \frac{s}{n-1} f.
$$
Hence, it is easy to see that  $s$ should be positive.
\end{proof}

\begin{thm}
Let $(M^n, g, f)$ be a closed generalized $(\lambda, n+1)$-Einstein manifold with $f^{-1}(0)\ne \emptyset$. If $(M, g)$ has constant  scalar curvature and $\o=df\wedge i_{\n f}z = 0$, then, up to finite cover and rescaling, $M$ is isometric to a sphere ${\Bbb S}^n$ or the product ${\Bbb S}^1 \times {\Sigma}^{n-1}$ of a circle and an $(n-1)$-dimensional Einstein manifold $\Sigma$ of positive Ricci curvature. 
\end{thm}
\begin{proof}
From Corollary~\ref{cor2021-4-19-12}, we have $\lambda = \frac{s}{n-1}$ and so  the equation (\ref{eqn5}) is reduced to
$$
fz = Ddf + \frac{s}{n(n-1)},
$$
which is exactly a  static vacuum space. Applying one of main results in \cite{h-y5}, we obtain our conclusion.
\end{proof}

In case of positive isotropic curvature, we have the following.

\begin{cor}
 Let $(M^n, g, f), n \ge 4, $ be a closed generalized $(\lambda, n+1)$-Einstein manifold of constant scalar curvature with $f^{-1}(0)\ne \emptyset$. Suppose that $(M, g)$ has PIC and $\o = 0$. Then, up to finite cover and rescaling, $M$ is isometric to a sphere ${\Bbb S}^n$ or the product ${\Bbb S}^1 \times {\Bbb S}^{n-1}$.
\end{cor}
\begin{proof}
In case of product $M= {\Bbb S}^1 \times \Sigma^{n-1}$,  $\Sigma$ is homeomorphic to ${\Bbb S}^{n-1}$ by PIC condition 
(\cite{mm88}). Applying a result in \cite{BS}, it must be isometric to a round sphere ${\Bbb S}^{n-1}$ for $n \ge 5$. For $n=4$,   it is easy to see that $\Sigma = {\Bbb S}^{n-1}$ since $\Sigma$ is an Einstein manifold of positive Ricci curvature. 
\end{proof}

From now on ,  we assume  $m>1$ for generalized $(\lambda, n+m)$-Einstein manifolds $(M^n, g, f)$ throughout the paper.
For the case $m >1$, we can also show that the scalar curvature is positive as Corollary~\ref{cor2021-4-19-12} if it is constant.

\begin{prop}\label{prop2021-4-19-18}
Let $m>1$ be an integer. Let $(M^n, g, f)$ be a closed generalized 
$(\lambda, n+m)$-Einstein manifold with $f^{-1}(0)\ne \emptyset$. If  the scalar curvature $s$ is constant, then $s$ should be positive.
\end{prop}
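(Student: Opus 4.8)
The plan is to use the constancy of $s$ to turn the contracted second-Bianchi identity of Lemma~\ref{lem2021-4-19-100} into a divergence-form equation for $\nabla f$, and then to compare two evaluations of $\int_M z(\nabla f,\nabla f)$. Concretely: since $s$ is constant, the trace of (\ref{eqn16}), $\Delta f=-\frac{s}{m(n-1)}f-\frac{n}{m}\mu$, gives $d\mu=-\frac{m}{n}\,d\Delta f-\frac{s}{n(n-1)}\,df$; substituting this together with $ds=0$ into (\ref{eqn2021-4-19-15}) produces
\[
(m-1)\,i_{\nabla f}z\;=\;-\frac{m(n-1)}{n}\,d\Delta f\;-\;\frac{ms}{n}\,df
\]
(equivalently, apply the divergence $\delta$ to $fz=m\,\mathring{Ddf}$ and use $\delta z=0$).

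Next I would pair this identity with $\nabla f$, integrate over $M$, and use $\int_M\langle\nabla\Delta f,\nabla f\rangle=-\int_M(\Delta f)^2$ to obtain
\[
(m-1)\int_M z(\nabla f,\nabla f)\;=\;\frac{m(n-1)}{n}\int_M(\Delta f)^2\;-\;\frac{ms}{n}\int_M|\nabla f|^2 .
\]
On the other hand, $\int_M z(\nabla f,\nabla f)=\int_M\langle i_{\nabla f}z,df\rangle=\int_M(\delta(i_{\nabla f}z))\,f$; since $s$ is constant, $\delta z=-\frac{n-2}{2n}ds=0$, while $fz=m\,\mathring{Ddf}$ gives $Ddf=\frac{f}{m}z+\frac{\Delta f}{n}g$ and hence $\langle z,Ddf\rangle=\frac{f}{m}|z|^2$ because $z$ is trace-free, so $\delta(i_{\nabla f}z)=-\frac{f}{m}|z|^2$ and
\[
\int_M z(\nabla f,\nabla f)\;=\;-\frac1m\int_M f^2|z|^2\;\le\;0 .
\]

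Combining the two displays eliminates $\int_M z(\nabla f,\nabla f)$ and leaves
\[
s\int_M|\nabla f|^2\;=\;(n-1)\int_M(\Delta f)^2\;+\;\frac{n(m-1)}{m^2}\int_M f^2|z|^2\;\ge\;0 .
\]
Since $f^{-1}(0)\neq\emptyset$ and $f$ changes sign, $f$ is non-constant, so $\int_M|\nabla f|^2>0$, whence $s\ge 0$; moreover $s=0$ would force $\Delta f\equiv 0$, i.e.\ $f$ constant on the closed manifold $M$, a contradiction. Therefore $s>0$.

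The computations here are all short. The step needing the most care is the first one: verifying that once $s$ is constant every $\lambda$- and $d\lambda$-contribution in (\ref{eqn2021-4-19-15}) really collapses into the stated divergence identity, and then tracking signs carefully in the two evaluations of $\int_M z(\nabla f,\nabla f)$, since the whole argument hinges on that integral being $\le 0$; after that, the positivity of $s$ is immediate, and note that the PIC hypothesis plays no role in this proposition.
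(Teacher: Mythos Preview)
Your argument is correct, and it takes a genuinely different route from the paper's. The paper starts from the same divergence identity (Lemma~\ref{lem2021-4-19-100} with $ds=0$), but instead of integrating over $M$ it takes a further divergence to obtain
\[
\Delta\!\left[(n-1)\mu-\tfrac{m-1}{n}sf\right]=\tfrac{m-1}{m}\,f|z|^2,
\]
applies the maximum principle on $\{f\ge 0\}$ to deduce the pointwise inequality $n\lambda\le \frac{m+n-1}{n-1}s$ there, and then appeals to Lemma~\ref{lem2021-5-15-1} (existence of a point in $\{f>0\}$ with $n\lambda\ge s$) to force $s>0$. Your approach replaces the maximum principle and Lemma~\ref{lem2021-5-15-1} by a single global integration by parts, yielding the clean identity
\[
s\int_M|\nabla f|^2=(n-1)\int_M(\Delta f)^2+\frac{n(m-1)}{m^2}\int_M f^2|z|^2,
\]
from which $s>0$ is immediate. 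This is shorter, more quantitative, and in fact works for all real $m\ge 1$ (not just integers $m>1$); the paper's route, on the other hand, produces a pointwise relation between $\lambda$ and $s$ on $\{f>0\}$ that may be of independent use. One small comment: you should justify ``$f$ non-constant'' directly from the standing hypothesis $\min_M f<0<\max_M f$ (stated in the introduction), since $f^{-1}(0)\ne\emptyset$ alone does not formally exclude $f\equiv 0$.
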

\begin{proof}
Letting $\mu= \lambda f - \frac{s}{n-1} f$, it follows from Lemma~\ref{lem2021-4-19-100} that
$$
(m-1)i_{\n f}z + \frac{m-1}{n}s df = (n-1)d \mu.
$$
Taking the divergence operator $\d$ of this equation,  we have
$$
(m-1)\d i_{\n f}z - \frac{m-1}{n}s \Delta f = -(n-1)\Delta \mu.
$$
Since $\d i_{\n f}z = - \frac{n-2}{2n}\langle \n s, \n f\rangle - \frac{f}{m}|z|^2 
= - \frac{f}{m}|z|^2$, we obtain
\bea
\Delta \left[(n-1)\mu - \frac{m-1}{n}sf\right] = \frac{m-1}{m}f|z|^2.\label{eqn2021-5-15-6}
\eea
Applying the maximal principle to the function $(n-1)\mu - \frac{m-1}{n}sf$ on the set $f \ge 0$,  we have
$$
(n-1)\mu - \frac{m-1}{n}sf  \le 0
$$
on the set $f \ge 0$ since  $(n-1)\mu - \frac{m-1}{n}sf =0$ on the set $f^{-1}(0)$.  
Substituting $\mu= \lambda f - \frac{s}{n-1} f$ into this inequality, we have
\bea
n \lambda \le \frac{m+n-1}{n-1} s = s + \frac{m}{n-1}s\label{eqn2021-5-15-5}
\eea
on the set $f \ge 0$. 
Since $\{f>0\}\cap \{n\lambda >s\} \ne \emptyset$ by Lemma~\ref{lem2021-5-15-1}  and
$s$ is constant, we must have $s>0$.

\end{proof}

\section{Tensorial Properties}

In this section, we introduce a $3$-tensor $T$ and enumerate some relations $T$ with Cotton tensor and Weyl tensor 
for generalized $(\lambda, n+m)$-Einstein manifolds $(M^n, g, f)$. 
After doing this, we investigate basic properties for the $2$-form
$\omega:= df \wedge i_{\n f}z$ which plays an important role in proving our main results.

Let $(M^n, g)$ be a Riemannian manifold of dimension $n$ with the Levi-Civita
 connection $D$, and let $h$ be a symmetric $2$-tensor on $M$. The
differential $d^Dh$ is defined by 
$$ 
d^Dh(X, Y, Z) = D_Xh(Y, Z) - D_Yh(X, Z) 
$$
 for any vectors $X, Y$ and $Z$.
 
\begin{defi}
 The {Cotton tensor}  $C \in\Gamma(\Lambda^2 M \otimes T^*M)$ is defined by 
 \bea
 C = d^D \left({\rm Ric} - \frac{s}{2(n-1)} g\right) 
 = d^D{r} - \frac{1}{2(n-1)} ds \wedge g, \label{eqn2017-4-1-1}
 \eea 
 where $ds \wedge g$ is defined by $ds\wedge g(X, Y, Z) = ds(X)g(Y, Z) - ds(Y)g(X, Z)$ for vectors 
 $X, Y, Z$. 
\end{defi}

Using the trace-less Ricci tensor  $z = r-\frac{s}{n}g$, the Cotton tensor can be written as
 \be
 C = d^D z + \frac{n-2}{2n(n-1)} ds \wedge g.\label{eqn2020-11-12-2}
 \ee
Introducing a local orthonormal  frame $\{e_i\}$ and denoting $C_{ijk} = C(e_i, e_j, e_k)$, we have
\bea
\langle \d C, z\rangle &=& - C_{ijk;i}z_{jk} = - (C_{ijk}z_{jk})_{;i} + C_{ijk}z_{jk;i}\\
&=&
 - (C_{ijk}z_{jk})_{;i} + \frac{1}{2}|C|^2,
 \eea
 where the semi-colon denotes  covariant derivative. It is also easy to see that the cyclic summation of indices in $C$ vanishes: $C_{ijk}+C_{jki}+C_{kij} = 0$, and a trace of $C$ in any two summands also vanishes.

Related to the Cotton tensor $C$, the divergence of Weyl tensor $\mathcal W$  is given by
\be
\d \mathcal W = - \frac{n-3}{n-2}d^D \left({\rm Ric} - \frac{s}{2(n-1)} g\right) 
= - \frac{n-3}{n-2} C\label{eqn2016-12-3-16} 
\ee
under the following identification
$$ 
\Gamma(T^*M\otimes \Lambda^2M) \equiv \Gamma(\Lambda^2M \otimes T^*M). 
$$

Next, for a  generalized $(\lambda, n+m)$-Einstein manifold $(M^n, g, f)$, we define the tensor $T$ as
\bea
T = \frac{1}{n-2} df \wedge z + \frac{1}{(n-1)(n-2)} i_{\n f}z \wedge g,\label{eqn6}
\eea
where $i_{\n f}z$ denotes the interior product given by
$i_{\n f}z(X) = z(\n f, X)$ for any vector field $X$.

The first observation on $T$ for  a  generalized $(\lambda, n+m)$-Einstein manifold $(M^n, g, f)$ is the following.

\begin{lem}\label{lem202-3-3-4}
Let $(M^n, g, f)$ be a  generalized $(\lambda, n+m)$-Einstein manifold.
Then 
\bea
fC = m {\tilde i}_{\n f} {\mathcal W} - (m+n-2)T.\label{eqn7}
\eea
Here ${\tilde i}_{\n f} {\mathcal W}$ is defined by ${\tilde i}_{\n f} {\mathcal W}(X, Y, Z) = {\mathcal W}(X, Y, Z, \n f)$.
\end{lem}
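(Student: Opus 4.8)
The plan is to derive the identity by a direct computation of the Cotton tensor of $g$ using the defining equation \eqref{eqn1}, expressed through the traceless form \eqref{eqn16}, and then to recognize the Weyl part via the standard decomposition of the full curvature tensor. First I would use \eqref{eqn16}, namely $fz = mDdf + \left(\frac{sf}{n(n-1)} + \mu\right)g$, and apply the operator $d^D$ to both sides after dividing conceptually by $f$; more carefully, I would compute $d^D(fz)$ directly, using the Leibniz rule $d^D(fz)(X,Y,Z) = df(X)z(Y,Z) - df(Y)z(X,Z) + f\,(d^Dz)(X,Y,Z)$, so that the first two terms produce exactly the $df\wedge z$ piece. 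On the right-hand side, $d^D(Ddf)$ is the alternation of the third covariant derivative of $f$, which by the Ricci identity equals the curvature tensor contracted with $\n f$: $D^3f(X,Y,Z) - D^3f(Y,X,Z) = -R(X,Y,Z,\n f)$. The term $d^D$ applied to $\left(\frac{sf}{n(n-1)} + \mu\right)g$ becomes a wedge of $d\left(\frac{sf}{n(n-1)} + \mu\right)$ with $g$, and here one substitutes $\mu = \lambda f - \frac{s}{n-1}f$ together with the trace identity for $\mu$ from item (v), and uses Lemma~\ref{lem2021-4-19-100} to rewrite $d\mu$ and the $i_{\n f}z$ contributions.

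The key algebraic step is to replace the full curvature tensor $R$ contracted with $\n f$ by its Weyl component plus Schouten-type correction terms. Using the decomposition
\[
R_{ijkl} = \mathcal W_{ijkl} + \frac{1}{n-2}\left(r_{ik}g_{jl} - r_{il}g_{jk} + r_{jl}g_{ik} - r_{jk}g_{il}\right) - \frac{s}{(n-1)(n-2)}\left(g_{ik}g_{jl} - g_{il}g_{jk}\right),
\]
contracting the last index with $\n f$ and writing $r = z + \frac{s}{n}g$, the Ricci-curvature correction terms collect precisely into an expression involving $df\wedge z$ and $i_{\n f}z\wedge g$ — that is, into $T$ as defined in \eqref{eqn6} — plus terms proportional to $df\wedge g$ that should cancel against the $ds\wedge g$ term coming from the definition \eqref{eqn2020-11-12-2} of $C$ in the form $C = d^Dz + \frac{n-2}{2n(n-1)}ds\wedge g$. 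The scalar-curvature correction term in the Weyl decomposition contributes only a multiple of $df\wedge g$, which likewise must be absorbed. Matching all $df\wedge g$ coefficients and confirming they vanish is the bookkeeping heart of the argument.

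The main obstacle I anticipate is precisely this coefficient bookkeeping: there are several sources of $df\wedge g$ terms — from $d^D$ of the scalar part of \eqref{eqn16}, from the Schouten correction in the Weyl decomposition, from rewriting $i_{\n f}r$ in terms of $i_{\n f}z$, and from the $ds\wedge g$ normalization in $C$ — and one must verify they cancel identically, using that $s$ need not be constant at this stage, so the $ds$ terms must also be tracked. The cleanest route is probably to work in a local orthonormal frame $\{e_i\}$ with $C_{ijk}$, $z_{jk}$, $\mathcal W_{ijkl}$ in index notation, expand everything, and invoke Lemma~\ref{lem2021-4-19-100} at the end to eliminate the residual first-derivative-of-$\mu$ and $fds$ combinations. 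Once the $df\wedge g$ terms are shown to drop out, the surviving terms are exactly $fC$ on the left and $m\,\tilde i_{\n f}\mathcal W$ minus $(m+n-2)T$ on the right, giving \eqref{eqn7}. I would double-check the coefficient $m+n-2$ by specializing to the Einstein case ($z=0$, $f$ constant would be trivial, so instead the static/soliton sanity checks) or by comparing with the known $m\to\infty$ Ricci-soliton analogue.
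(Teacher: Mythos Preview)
Your proposal is correct and follows essentially the same approach as the paper: apply $d^D$ to the structural equation, use the Ricci identity to produce $\tilde{i}_{\n f}\mathcal{R}$, decompose via the Weyl tensor, and then eliminate the residual $df\wedge g$ and $ds\wedge g$ terms using the divergence identity. The only cosmetic difference is that the paper applies $d^D$ to the form \eqref{eqn5} involving $\Delta f$ and re-derives the needed divergence relation inline, whereas you work with the equivalent form \eqref{eqn16} involving $\mu$ and invoke Lemma~\ref{lem2021-4-19-100} for the same purpose.
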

\begin{proof}
Taking $d^D$ in (\ref{eqn5}), we obtain
$$
df\wedge z + f d^D z = m {\tilde i}_{\n f}{\mathcal R} - \frac{m}{n} d\Delta f \wedge g,
$$
where ${\mathcal R}$ denotes the Riemannian curvature tensor.
From the following curvature decomposition
\be
{\mathcal R} =  {\mathcal W} + \frac{s}{2n(n-1)} g\owedge g + \frac{1}{n-2} z\owedge g,\label{eqn2025-2-19-5}
\ee
we have
$$
{\tilde i}_{\n f} {\mathcal R}= {\tilde i}_{\n f}{\mathcal W}  - \frac{s}{n(n-1)}df \wedge g - \frac{1}{n-2}df\wedge z - \frac{1}{n-2}i_{\n f}z \wedge g.
$$
By (\ref{eqn2020-11-12-2}) and above,  we have
\be
fC &=& f d^Dz + \frac{n-2}{2n(n-1)} fds \wedge g \nonumber\\
&=&
m {\tilde i}_{\n f}{\mathcal R} - \frac{m}{n} d\Delta f \wedge g -df\wedge z + \frac{n-2}{2n(n-1)} fds \wedge g\nonumber\\
&=&
m {\tilde i}_{\n f}{\mathcal W} -\frac{m+n-2}{n-2}df \wedge z - \frac{m}{n-2} i_{\n f}z \wedge g\nonumber\\
&&\quad 
 + \frac{1}{n-1} \left(\frac{n-2}{2n} fds \wedge g - \frac{m}{n} s df \wedge g\right)
  - \frac{m}{n} d\Delta f \wedge g.\label{eqn2021-7-7-1}
\ee
Next, by taking the divergence operator $\d$ in (\ref{eqn5}), we have
\bea
-i_{\n f}z + f \d z = m(-i_{\n f} r - d\Delta f ) + \frac{m}{n} d\Delta f. \label{eqn2021-4-19-1}
\eea
Since $\d z = - \frac{n-2}{2n} ds$ and $i_{\n f}r = i_{\n f} z + \frac{s}{n} df$, we obtain
$$
(m-1)i_{\n f}z = \frac{n-2}{2n} f ds\wedge g - \frac{m}{n} s df \wedge g 
- \frac{m(n-1)}{n}d\Delta f\wedge g.
$$
Substituting this into (\ref{eqn2021-7-7-1}), we obtain
\bea
fC &=&  m {\tilde i}_{\n f}{\mathcal W} -\frac{m+n-2}{n-2}df \wedge z - \frac{m+n-2}{(n-1)(n-2)} i_{\n f}z \wedge g  \nonumber\\
&=&
m {\tilde i}_{\n f}{\mathcal W} -(m+n-2)T.\label{eqn8}
\eea
\end{proof}

For the Weyl tensor ${\mathcal W}$ and the trace-less Ricci tensor $z$, the symmetric $2$-tensor
$\mathring{\mathcal W}z$ is defined by
$$
\mathring{\mathcal W}z(X, Y) = z({\mathcal W}(X, e_i)Y, e_i)
$$
for a local frame $\{e_i\}$.

\begin{lem}\label{lem2018-2-10-1}
Let $(M^n, g, f)$ be a  generalized $(\lambda, n+m)$-Einstein manifold. Then
$$
\d ({\tilde i}_{\n f} {\mathcal W})  = - \frac{n-3}{n-2} \widehat C +  
\frac{f}{m} {\mathring {\mathcal W}}z,
$$
where ${\widetilde C}$ is a $2$-tensor defined as
$$
{\widehat C}(X, Y) = C(Y, \n f, X)
$$
for any vectors $X, Y$.
\end{lem}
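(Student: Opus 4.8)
The identity is purely tensorial and holds pointwise, so the only input beyond standard curvature identities will be the structural equation \eqref{eqn5}; in particular no closedness, PIC, or scalar-curvature hypothesis is needed. The plan is to fix a local orthonormal frame $\{e_i\}$, write $(\tilde i_{\n f}\mathcal W)_{ijk} = \mathcal W_{ijkl}\, f_{;l}$, and compute the divergence by the Leibniz rule. Since $\d$ on a $3$-tensor contracts the first slot with a covariant derivative (with a minus sign, as $\d=-\mathrm{div}$), one gets
$$
\d(\tilde i_{\n f}\mathcal W)(e_j,e_k)\;=\;-\,\nabla_i\bigl(\mathcal W_{ijkl}\, f_{;l}\bigr)\;=\;-\,\mathcal W_{ijkl;i}\, f_{;l}\;-\;\mathcal W_{ijkl}\, f_{;il},
$$
and it remains to identify the two terms on the right.

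For the first term I would invoke the contracted second Bianchi identity for the Weyl tensor, i.e. \eqref{eqn2016-12-3-16}, $\d\mathcal W=-\frac{n-3}{n-2}C$, transported carefully through the identification $\Gamma(T^*M\otimes\Lambda^2M)\equiv\Gamma(\Lambda^2M\otimes T^*M)$. Under this identification the contraction $-\mathcal W_{ijkl;i}\, f_{;l}$ is exactly $-\frac{n-3}{n-2}\,C(e_k,\n f,e_j)=-\frac{n-3}{n-2}\,\widehat C(e_j,e_k)$, which is the first term on the right-hand side of the claim.

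For the second term I would eliminate the Hessian using the structural equation: by \eqref{eqn5} (equivalently \eqref{eqn3}) one has $Ddf=\frac{f}{m}z+\frac{\Delta f}{n}g$, so $f_{;il}=\frac{f}{m}z_{il}+\frac{\Delta f}{n}g_{il}$. Because the Weyl tensor is totally trace-free, $\mathcal W_{ijkl}\,g_{il}=0$, so only the $z$-part survives and $-\mathcal W_{ijkl}\, f_{;il}=-\frac{f}{m}\mathcal W_{ijkl}\,z_{il}$. Using the first-pair antisymmetry $\mathcal W_{ijkl}=-\mathcal W_{jikl}$ together with the symmetry of $z$, and comparing with the definition $(\mathring{\mathcal W}z)(e_j,e_k)=z(\mathcal W(e_j,e_i)e_k,e_i)$, this equals $\frac{f}{m}(\mathring{\mathcal W}z)(e_j,e_k)$. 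Adding the two contributions gives $\d(\tilde i_{\n f}\mathcal W)=-\frac{n-3}{n-2}\widehat C+\frac{f}{m}\mathring{\mathcal W}z$.

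I expect no analytic or geometric difficulty here: the one genuine point of care is the bookkeeping of index permutations and signs, so that the $C$-term comes out in the precise slot order of the stated $\widehat C(X,Y)=C(Y,\n f,X)$ and the $z$-term with the sign matching the declared convention for $\mathring{\mathcal W}z$ — a slip in either identification is the only way this computation would go wrong.
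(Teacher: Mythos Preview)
Your proposal is correct and follows essentially the same approach as the paper: a Leibniz-rule expansion of $\d(\tilde i_{\n f}\mathcal W)$, identification of the first term via $\d\mathcal W=-\frac{n-3}{n-2}C$, and elimination of the Hessian in the second term via \eqref{eqn5} together with the total trace-freeness of $\mathcal W$. The only cosmetic difference is that the paper computes in abstract frame notation $\d(\tilde i_{\n f}\mathcal W)(X,Y)=-D_{e_i}\mathcal W(e_i,X,Y,\n f)-\mathcal W(e_i,X,Y,D_{e_i}\n f)$ rather than in components, but the identifications of both terms proceed exactly as you outline.
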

\begin{proof}
Choose a local orthonormal frame $\{e_i\}$ which is normal at a point. Then, at the point,
it follows from definition and  (\ref{eqn5}), (\ref{eqn2016-12-3-16})  that
\bea
 \d ({\tilde i}_{\n f} {\mathcal W}) (X, Y)  &=&
 -D_{e_i}{\mathcal W}(e_i, X, Y, \n f) - {\mathcal W}(e_i, X, Y, D_{e_i}\n f)\\
 &=&
 \d {\mathcal W}(X, Y, \n f) - Ddf(e_i, e_j)  {\mathcal W}(e_i, X, Y, e_j)\\
 &=& 
 \d {\mathcal W}(X, Y, \n f) - \frac{f}{m} z(e_i, e_j) {\mathcal W}(e_i, X, Y, e_j)\qquad\quad (\because {\rm tr}_{14} {\mathcal W} =0)\\
 &=&
 - \frac{n-3}{n-2} C (Y, \n f, X) - \frac{f}{m} z(e_i, e_j) {\mathcal W}(e_i, X, Y, e_j) \\
 &=&
      - \frac{n-3}{n-2}C(Y, \n f, X) + \frac{f}{m} {\mathring {\mathcal W}}z (X, Y).
 \eea 

\end{proof}

For  a closed generalized $(\lambda, n+m)$-Einstein  manifold $(M^n, g, f)$, 
the vanishing of the tensor $T$ shows that $(M, g)$ satisfies  some constrained geometric structures.
 We say that $(M, g)$ has harmonic Weyl  curvature if $\d{\mathcal W} = 0$, and is Bach-flat 
 if the  Bach tensor $B$ defined by 
 $B = \frac{1}{n-3}\d^D\d{\mathcal W} + \frac{1}{n-2}{\mathring {\mathcal W}}z$ 
 vanishes. It is easy to see, from definition and 
(\ref{eqn2016-12-3-16}), that $(n-2)B = - \d C + \mathring{\mathcal W}z$.

\begin{thm} \label{lem23}
Let $(M^n, g, f)$ be a closed generalized $(\lambda, n+m)$-Einstein manifold.
 If $T=0$, then $(M, g)$ has harmonic Weyl curvature and is Bach-flat. 
\end{thm}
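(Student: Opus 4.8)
The plan is to extract the vanishing of $\d\mathcal W$ and of the Bach tensor directly from the identity $fC = m\tilde i_{\n f}\mathcal W - (m+n-2)T$ of Lemma~\ref{lem202-3-3-4}. Since $T=0$ by hypothesis, this identity collapses to $fC = m\,\tilde i_{\n f}\mathcal W$. First I would exploit the fact (Lemma~\ref{prop2021-5-15-2}) that $f$ has no critical points on $f^{-1}(0)$, so on the open dense set $\{f\ne 0\}$ we may divide and obtain $C = \frac{m}{f}\,\tilde i_{\n f}\mathcal W$; by continuity it then suffices to show $C=0$, since $C$ is a smooth tensor agreeing with a continuous expression on a dense set, but more robustly one contracts to avoid the division issue entirely. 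The cleanest route: pair the identity with an appropriate tensor and integrate over $M$, turning $T=0$ into an integral identity forcing $C\equiv 0$; once $C=0$ we get $\d\mathcal W = -\frac{n-3}{n-2}C = 0$ from \eqref{eqn2016-12-3-16}, i.e.\ harmonic Weyl curvature, and then Bach-flatness follows since $(n-2)B = -\d C + \mathring{\mathcal W}z$ and, for harmonic Weyl curvature, the term $\mathring{\mathcal W}z$ must also be handled.

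More concretely, here is the order of steps I would carry out. (1) From $T=0$ and Lemma~\ref{lem202-3-3-4}, write $fC = m\,\tilde i_{\n f}\mathcal W$. (2) Apply the divergence operator $\d$ to both sides. On the left, $\d(fC) = -i_{\n f}C + f\,\d C$, and using the Cotton-tensor identity $\langle \d C, z\rangle = -(C_{ijk}z_{jk})_{;i} + \tfrac12|C|^2$ together with $fz = m\mathring{Ddf}$ from \eqref{eqn3}, one can express everything in terms of $|C|^2$ and $z$. On the right, apply Lemma~\ref{lem2018-2-10-1}: $\d(\tilde i_{\n f}\mathcal W) = -\frac{n-3}{n-2}\widehat C + \frac{f}{m}\mathring{\mathcal W}z$. (3) Contract the resulting $2$-tensor identity with $z$ and integrate over the closed manifold $M$. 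The divergence terms integrate to zero, and the structural relations $\d z = -\frac{n-2}{2n}ds$, $\d r = -\frac12 ds$, together with $fz = m\mathring{Ddf}$, should combine to yield something of the shape $\int_M \frac{f}{?}\,|C|^2 = \int_M (\text{curvature terms in } z, \mathcal W)$. (4) The key is then to recognize that the right-hand curvature integral, after using $T=0$ once more to rewrite $\tilde i_{\n f}\mathcal W$ in terms of $df\wedge z$ and $i_{\n f}z\wedge g$, also reduces to a multiple of $\int_M |C|^2$ or of a manifestly signed quantity; comparing signs forces $C\equiv 0$.

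The main obstacle I anticipate is controlling the sign in the final integral identity: because $f$ changes sign on $M$ (we are in the regime $a<0<b$), an integral like $\int_M f|C|^2$ has no definite sign, so one cannot immediately conclude $C=0$ from a naive integration by parts as one would in the Einstein or soliton case. The trick will be to find the \emph{right} test tensor to contract against — plausibly $\frac1f z$, or $\mathring{Ddf}$, or a combination weighted so that the potentially-bad factor of $f$ cancels and one is left with $\int_M |C|^2 \le 0$ or $\int_M |\tilde i_{\n f}\mathcal W|^2 \le 0$. A secondary technical point is justifying the division by $f$ or the use of $\frac1f$-weighted quantities near $f^{-1}(0)$; here Lemma~\ref{prop2021-5-15-2} (no critical points on $f^{-1}(0)$, so $|\n f|$ is bounded below near the zero set) and the totally-geodesic structure from Lemma~\ref{lem202-5-15-3} give enough regularity that the boundary/singular contributions along $f^{-1}(0)$ vanish. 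Once $\d\mathcal W = 0$ is established, Bach-flatness is essentially immediate: harmonic Weyl curvature gives $\d^D\d\mathcal W = 0$, so $(n-2)B = \mathring{\mathcal W}z$, and this last term is made to vanish by feeding $C=0$ back into Lemma~\ref{lem2018-2-10-1}, which then reads $0 = \d(\tilde i_{\n f}\mathcal W) = \frac{f}{m}\mathring{\mathcal W}z$, whence $\mathring{\mathcal W}z = 0$ on $\{f\ne0\}$ and, by density, everywhere.
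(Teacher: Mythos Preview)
Your overall architecture matches the paper's: from $T=0$ and Lemma~\ref{lem202-3-3-4} obtain $fC = m\,{\tilde i}_{\n f}\mathcal W$, take the divergence via Lemma~\ref{lem2018-2-10-1}, contract with $z$, and exploit $\int_M\langle\d C, z\rangle = \tfrac12\int_M|C|^2$ to force $C=0$. Your concluding paragraph on Bach-flatness is also correct and is exactly how the paper finishes.

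The genuine gap is precisely where you locate it---the sign problem---but your proposed fixes (a $1/f$-weight, a different test tensor, boundary analysis along $f^{-1}(0)$) are not what resolves it, and your step (4) is off: $T=0$ does not let you rewrite ${\tilde i}_{\n f}\mathcal W$ in terms of $df\wedge z$ and $i_{\n f}z\wedge g$; it says those two pieces cancel each other. The missing idea is a \emph{pointwise algebraic} consequence of $T=0$ that you have not extracted. In an orthonormal frame $\{e_1=N,e_2,\ldots,e_n\}$ with $N=\n f/|\n f|$, the vanishing of $i_{\n f}T$ forces
\[
z(e_i,e_j)=-\frac{\a}{n-1}\,\d_{ij}\quad (2\le i,j\le n),\qquad z(e_i,N)=0\quad (2\le i\le n),
\]
where $\a=z(N,N)$. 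Thus $z$ has exactly the two eigenvalues $\a$ and $-\a/(n-1)$. With this eigenstructure, together with the trace-freeness of $C$ and of $\mathring{\mathcal W}z$, one checks directly that $\langle i_{\n f}C,z\rangle=0$ and $\mathring{\mathcal W}z(N,N)=0$ \emph{pointwise}; feeding these into the contracted divergence identity gives $f\langle\d C,z\rangle=0$, hence $\langle\d C,z\rangle\equiv 0$ by continuity across $f^{-1}(0)$. The integral identity then reads $0=\tfrac12\int_M|C|^2$ with no factor of $f$ remaining---no sign argument, no weighting, and no boundary analysis is needed. The whole difficulty evaporates once you first read off what $T=0$ says about the eigenstructure of $z$.
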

\begin{proof} 
By the definition of $T$, for an orthonormal frame $\{e_i\}_{1\leq i\leq n}$ 
with {$e_1=N = \frac{\n f}{|\n f|}$}, we have
\bea
(n-2)i_{\nabla f}T(e_j, e_k)&=& |\nabla f|^2z(e_j, e_k)+\frac 1{n-1}z(\nabla f, \nabla f)\d_{jk}\\
& & -\frac 1{n-1}z(\nabla f, e_j)df(e_k)-z(\nabla f, e_k)df(e_j).
\eea
Since $T=0$ by assumption, for {$2\leq j, k\leq n$}
\be
z_{jk}=z(e_j, e_k)= -\frac 1{n-1}z(N, N) \, \delta_{jk}. \label{eq1118}
\ee 
Also, for {$2\leq j\leq n$} 
$$
0=(n-2)i_{\nabla f}T(e_j, N)=  \frac {n-2}{n-1}\, z(e_j, N)|\nabla f|^2, 
$$
implying that
\be
 z(e_j, N)=0 \label{eq1119}
 \ee
for {$2\leq j\leq n$}. Letting $\a:= z(N, N)$, we have, by (\ref{eq1118}) and (\ref{eq1119}), 
\be
\langle i_{\nabla f}C, z\rangle = -\frac {\alpha}{n-1}\sum_{j=1}^{n-1} C(\nabla f, e_j, e_j) =\frac {\alpha}{n-1}C(\nabla f, N, N)=0.
\label{eq1120}
\ee
On the other hand, since 
\be 
f\, C = m\tilde{i}_{\nabla f}{ {\mathcal W}} \label{eq1109}
\ee
by  Lemma~\ref{lem202-3-3-4},   we have   $C(X, Y, \n f) = 0$ for any vector fields $X$ and $Y$. This  shows
$$ 
C(Y, \nabla f, X)+C(\nabla f, X, Y) = 0
$$
since the cyclic summation of $C$ is vanishing, which implies
\be
\widehat C = -i_{\n f}C.\label{eqn11}
\ee
By taking the divergence $\d$ of (\ref{eq1109}) and using Lemma~\ref{lem2018-2-10-1} and (\ref{eqn11}), we have 
\be
f\delta C = \frac{n-2+m(n-3)}{n-2} i_{\n f}C  + f\mathring{ {\mathcal W}}z. \label{eq1121}
\ee
It follows from the definition of $\mathring{ {\mathcal W}}z$ together with (\ref{eq1109}) that
$$
\mathring{ {\mathcal W}}z (\n f, X) = - \frac{f}{m} \langle i_XC, z\rangle 
$$
for any vector field $X$. In particular, by (\ref{eq1120}), we have
\be
\mathring{ {\mathcal W}}z(\nabla f, \nabla f) = -\frac{f}{m}\langle i_{\nabla f}C , z\rangle  =0\label{eqn12}.
\ee
Consequently, by (\ref{eq1121}) and  (\ref{eqn12})
\bea 
\delta C(N,N)= \mathring{ {\mathcal W}}z(N,N) =0.
\eea
Therefore, by (\ref{eq1120}), (\ref{eq1121}) and (\ref{eqn12}) again,
\bea
f\langle \delta C, z\rangle 
&=&
 \frac {n-2+m(n-3)}{n-2} f\langle i_{\nabla f}C, z\rangle +f\langle \mathring{ {\mathcal W}}z, z\rangle
= f\sum_{{2\leq j,k\leq n}}\mathring{ {\mathcal W}}z(e_j, e_k)z_{jk}\\
&=&-\frac {\a f}{n-1} \sum_{{2\leq j\leq n}}\mathring{ {\mathcal W}}z(e_j, e_j)
=\frac {\alpha f}{n-1}\mathring{ {\mathcal W}}z(N,N)=0,
\eea
implying that  
$$
\langle \delta C, z\rangle =0.
$$ 
Hence, from
$$ 
0=\int_M \langle \delta C, z\rangle= \frac 12 \int_M |C|^2,
$$
we have $C=0$, and hence ${\tilde i}_{\n f } {\mathcal W} = 0  ={ \mathring {\mathcal {\mathcal W}}}z$ by (\ref{eq1109}) and (\ref{eq1121}). 
Therefore
$$
\d {\mathcal W} = - \frac{n-3}{n-2} C = 0\quad \mbox{and}\quad (n-2)B = - \d C +{\mathring {\mathcal W}}z = 0.
$$
\end{proof}


Now, for  a closed generalized $(\lambda, n+m)$-Einstein  manifold $(M^n, g, f)$,  we define a $2$-form $\omega$ by
\be
\omega := df \wedge i_{\nabla f}z.\label{eqn20}
\ee
By the definition of $T$ and Lemma~\ref{lem202-3-3-4}, 
\bea
\quad T(X,Y, \nabla f)=\frac 1{n-1}df\wedge i_{\nabla f}z(X,Y)=\frac 1{n-1}\omega (X,Y)
 =-\frac {f}{m+n-2}\, \tilde{i}_{\nabla f}C(X,Y)\label{eq1001}
\eea
for any vector fields $X$ and $Y$.  Thus, we have
\be 
\omega = (n-1)\tilde{i}_{\nabla f}T =- \frac{n-1}{m+n-2} f \,\tilde{i}_{\nabla f}C. \label{eqn2021-3-17-1}
\ee
Let $\{e_i\}_{i=1}^n$ be a local orthonormal frame with {$e_1=N=\nabla f/|\nabla f|$}. 
It is clear that 
$$
\omega (e_j, e_k)=0
$$ 
for {$2 \leq j,k\leq n$} by  definition of $\o$. Thus, if $\tilde{i}_{\nabla f}C(N, e_i)=0$ for
 {$2\leq i\leq n$}, then $\omega =0$.

\begin{lem}\label{lem31}
As a $2$-form, we have
$$ 
\tilde{i}_{\nabla f}C=di_{\nabla f}z - \frac{n-2}{2n(n-1)} df \wedge ds.
$$
In particular, if $(M, g)$ has constant scalar curvature, then $\tilde{i}_{\nabla f}C$ is an exact form. 
\end{lem}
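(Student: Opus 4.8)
The plan is to read the identity directly off the formula \eqref{eqn2020-11-12-2} for the Cotton tensor, $C = d^D z + \frac{n-2}{2n(n-1)}\, ds\wedge g$. Inserting $\nabla f$ in the last slot gives, for vector fields $X,Y$,
\[
\tilde{i}_{\nabla f}C(X,Y) \;=\; d^Dz(X,Y,\nabla f) \;+\; \frac{n-2}{2n(n-1)}\big(ds(X)\,df(Y)-ds(Y)\,df(X)\big),
\]
since $g(Y,\nabla f)=df(Y)$; the second summand is exactly $-\frac{n-2}{2n(n-1)}\,df\wedge ds\,(X,Y)$. So everything reduces to the claim $d^Dz(X,Y,\nabla f) = d\,i_{\nabla f}z\,(X,Y)$, and once this is in hand the ``in particular'' is immediate: if $s$ is constant then $ds=0$, whence $\tilde{i}_{\nabla f}C = d\,i_{\nabla f}z$ is exact.

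To establish $d^Dz(\cdot,\cdot,\nabla f) = d\,i_{\nabla f}z$, I would compute the right-hand side with the Levi-Civita formula $d\eta(X,Y) = (D_X\eta)(Y)-(D_Y\eta)(X)$ applied to the $1$-form $\eta = i_{\nabla f}z$. A Leibniz expansion of $X\big(z(\nabla f,Y)\big)$, using metric compatibility of $D$, gives $(D_X\eta)(Y) = (D_Xz)(\nabla f,Y) + z(D_X\nabla f,Y)$. Antisymmetrizing in $X,Y$, the terms $(D_Xz)(\nabla f,Y)-(D_Yz)(\nabla f,X)$ become precisely $d^Dz(X,Y,\nabla f)$ after using the symmetry of $z$ (hence of $D_Xz$), and one is left with the ``error'' term $E(X,Y) := z(D_X\nabla f,Y) - z(D_Y\nabla f,X)$.

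The one substantive point is that $E$ vanishes, and this is exactly where the generalized $(\lambda,n+m)$-Einstein structure enters. Since $\langle D_X\nabla f,W\rangle = Ddf(X,W)$, equation \eqref{eqn5} gives $Ddf = \frac{f}{m}z + \frac{\Delta f}{n}g$, so for a local orthonormal frame $\{e_i\}$ one finds $z(D_X\nabla f,Y) = \frac{f}{m}\sum_i z(X,e_i)z(e_i,Y) + \frac{\Delta f}{n}z(X,Y)$, which is manifestly symmetric in $X,Y$; hence $E\equiv 0$. (Equivalently: the Hessian endomorphism is a polynomial in the symmetric endomorphism associated with $z$, so the two commute.) Combining the two displays yields $\tilde{i}_{\nabla f}C = d\,i_{\nabla f}z - \frac{n-2}{2n(n-1)}\,df\wedge ds$. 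I expect the only thing needing care to be the bookkeeping — which slot $\nabla f$ is contracted into, and the symmetrization — while the vanishing of $E$, though short, is the essential input.
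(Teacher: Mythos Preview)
Your proof is correct and follows essentially the same route as the paper: both split $d\,i_{\nabla f}z$ into a $d^Dz(\cdot,\cdot,\nabla f)$ piece and a cross term of the form $z(D_X\nabla f,Y)-z(D_Y\nabla f,X)$, and both kill that cross term using \eqref{eqn5} (the paper via the index computation $f_{lj}z_{lk}-f_{lk}z_{lj}=0$, you via the equivalent observation that the Hessian endomorphism is a polynomial in $z$). The only cosmetic difference is that the paper works in a normal frame while you argue invariantly.
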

\begin{proof} 
Choose a local orthonormal frame $\{e_i\}$ which is normal at a point $p \in M$, 
and let $\{\theta ^i\}$ be its dual coframe so that $d\theta^i\vert_{p}=0$.  
Since $i_{\nabla f}z=\sum_{l,k=1}^n f_l z_{lk}\theta ^k$ with $e_l(f) = f_l$, $e_l(s) = s_l$ and
$z(e_l, e_k) = z_{lk}$, by  (\ref{eqn2020-11-12-2}) and symmetric property of the Hessian of $f$ and $z$, we have
\bea
di_{\n f}z &=& 
\sum_{j,k} \sum_{l} (f_{lj} z_{lk} + f_l z_{lk;j}) \theta^j \wedge \theta^k \\
&=&
\sum_{j<k} \sum_{l}  f_l (z_{lk;j} - z_{lj;k}) \theta^j \wedge \theta^k \\
&=&
 \sum_{j<k} \sum_{l} f_l \left[C_{jkl} -\frac{n-2}{2n(n-1)} \left(s_{j}\d_{kl}- s_{k}\d_{jl}\right)\right]   \theta^j \wedge \theta^k\\
&=&
 {\tilde i}_{\n f}C- \frac{n-2}{2n(n-1)}  \sum_{j<k} (f_ks_j-f_js_k) \theta^j\wedge \theta^k.
 \eea
\end{proof}

\begin{lem}\label{lem32}
$\omega$ is a closed $2$-form, i.e., $d\omega =0$.
\end{lem}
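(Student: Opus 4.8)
The plan is to turn $d\omega$ into a multiple of $df\wedge\tilde{i}_{\nabla f}C$ and then annihilate that term using \eqref{eqn2021-3-17-1}. Since $df$ is closed, the graded Leibniz rule immediately gives
$$
d\omega=d\bigl(df\wedge i_{\nabla f}z\bigr)=-\,df\wedge d\bigl(i_{\nabla f}z\bigr).
$$
Now I would substitute Lemma~\ref{lem31}, which reads $d(i_{\nabla f}z)=\tilde{i}_{\nabla f}C+\frac{n-2}{2n(n-1)}\,df\wedge ds$. Wedging with $df$ kills the $df\wedge ds$ term, so we are left with the identity
$$
d\omega=-\,df\wedge\tilde{i}_{\nabla f}C,
$$
valid on all of $M$ with no assumption on the scalar curvature.

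For the second step I would invoke \eqref{eqn2021-3-17-1}, namely $f\,\tilde{i}_{\nabla f}C=-\frac{n-1}{m+n-2}\,\omega$. Multiplying the last display by $f$ and substituting yields
$$
f\,d\omega=-\,df\wedge\bigl(f\,\tilde{i}_{\nabla f}C\bigr)=\frac{m+n-2}{n-1}\,df\wedge\omega=\frac{m+n-2}{n-1}\,df\wedge df\wedge i_{\nabla f}z=0.
$$
Hence $d\omega$ vanishes on the open set $\{f\neq0\}$. By Lemma~\ref{prop2021-5-15-2} the function $f$ has no critical points on $f^{-1}(0)$, so $f^{-1}(0)$ is a regular level set, hence a hypersurface and in particular nowhere dense; since $d\omega$ is continuous it therefore vanishes on all of $M$.

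I do not expect a genuine obstacle here: once Lemma~\ref{lem31} and \eqref{eqn2021-3-17-1} are in hand the computation is essentially formal. The one point needing a word of care is that one cannot cancel the factor $f$ globally, so the vanishing of $d\omega$ on $\{f\neq0\}$ has to be propagated to the zero locus by a density/continuity argument rather than algebraically. If one prefers to avoid even that, one can instead argue pointwise in an orthonormal frame $\{e_1=N,e_2,\dots,e_n\}$ with $N=\nabla f/|\nabla f|$: the components of the $3$-form $df\wedge\tilde{i}_{\nabla f}C$ not involving $e_1$ vanish because $df(e_j)=0$ for $j\ge2$, while those involving $e_1$ are multiples of $\tilde{i}_{\nabla f}C(e_j,e_k)$ for $j,k\ge2$, which is zero off $f^{-1}(0)$ since $\omega(e_j,e_k)=0$ there by the very definition of $\omega$.
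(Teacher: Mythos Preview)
Your proof is correct and uses the same two inputs as the paper---Lemma~\ref{lem31} and the relation \eqref{eqn2021-3-17-1}---but the organization is different. The paper works in the frame $e_1=N$, expands $di_{\nabla f}z$ as $|\nabla f|\sum_{j<k}C_{jk1}\,\theta^j\wedge\theta^k+\frac{n-2}{2n(n-1)}|\nabla f|\sum_{k\ge2}s_k\,\theta^1\wedge\theta^k$, and then (without comment) drops the terms $C_{jk1}$ with $j,k\ge2$ so that only $\theta^1\wedge\theta^k$ components remain, whence wedging with $df=|\nabla f|\theta^1$ gives zero. That silent step is exactly what your alternative paragraph justifies: from \eqref{eqn2021-3-17-1} one has $fC_{jk1}=0$ for $j,k\ge2$, hence $C_{jk1}=0$ on $\{f\neq0\}$ and then everywhere by continuity. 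Your main argument bypasses the frame entirely by multiplying $d\omega=-df\wedge\tilde{i}_{\nabla f}C$ through by $f$ and using $df\wedge\omega=0$; this is cleaner, and the density/continuity step across $f^{-1}(0)$ is harmless since that set is a regular hypersurface by Lemma~\ref{prop2021-5-15-2}. One small quibble: your ``alternative'' route still needs the same continuity extension (you yourself say it holds ``off $f^{-1}(0)$''), so it does not actually avoid that step.
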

\begin{proof}
Choose a local orthonormal  frame $\{e_i\}$ with $e_1 = N = {\n f}/{|\n f|}$, and let $\{\theta^i\}$ be its dual coframe. 
Since $df  = |\n f|\theta^1$, by Lemma~\ref{lem31} 
\bea
di_{\n f}z 
&=& 
\sum_{j<k} \sum_{l} f_l C_{jkl}  \theta^j \wedge \theta^k 
 - \frac{n-2}{2n(n-1)}  \sum_{j<k} \sum_{l} (f_ks_j-f_js_k) \theta^j\wedge \theta^k\\
&=&
 |\n f| \sum_{k=2}^n   C_{1k1}  \theta^1 \wedge \theta^k
+ \frac{n-2}{2n(n-1)}  |\n f|\sum_{k=2}^n s_k
\theta^1\wedge \theta^k. 
\eea
Thus, by taking the exterior derivative of $\omega$ in (\ref{eqn20}), we have
$$
d\o = - df \wedge d i_{\n f}z = 0.
$$
\end{proof}

\section{closed generalized $(\lambda, n+m)$-Einstein  manifolds with $\omega=0$}

In this section, we assume that $(M^n, g, f)$ is a  closed generalized $(\lambda, n+m)$-Einstein 
manifold with $f^{-1}(0) \ne \emptyset$  satisfying
$\omega =df\wedge i_{\n f}z =0$. Then, it is easy to see that 
\bea
z(\nabla f, X)=0\label{eqn2019-5-27-2}
\eea
for any vector $X$ orthogonal to $\nabla f$ by  plugging $(\nabla f, X)$ into $\omega$. Thus, we may write 
$$
i_{\nabla f}z =\a df,\quad \mbox{where} \,\, \a = z(N, N), \,\, 
N = \frac{\n f}{|\n f|}
$$
as a $1$-form. Note that the function $\a$ is well-defined only on the set $M\setminus {\rm Crit}(f)$, but from $|\a| \le |z|$, it can be extended on the whole $M$ as a $C^0$-function.
Here ${\rm Crit}(f)$ denotes the set of all critical points of $f$.

In this section we will prove that there are no critical points of $f$ except at minimum
and maximum points of  $f$ in $M$, and either the {minimum} set $f^{-1}(a)$ for $a = \min_M f$
 and the maximum set $f^{-1}(b)$ for $b=\max_{M} f$ are both totally geodesic, or each set consists  of only a single point. 

First of all, recall that  $|\nabla f |$ is constant on each level set of $f$, since
$$ 
\frac 12 X(|\nabla f|^2)= \frac{f}{m}\, z(X, \nabla f) -\frac 1{n} (\Delta f) \langle X, \nabla f\rangle =0 
$$
for any tangent vector $X$ to a level set $f^{-1}(t)$. Moreover, on the set $f^{-1}(0)$, we have
$$ 
\nabla f(|\nabla f|^2)= \frac{f}{m}\, z(\n f, \nabla f) -\frac f{mn} (s- n\lambda) =0.
$$

\begin{lem}\label{lem2021-4-20-5}
Let $(M^n, g, f)$ be a  generalized $(\lambda, n+m)$-Einstein manifold satisfying  $\o = 0$. 
Then  $D_NN=0$ and the function $\a$ is constant along each level hypersurface $f^{-1}(t)$ of $f$.
\end{lem}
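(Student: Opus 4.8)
I would prove the two assertions in turn. For $D_NN=0$: rewrite (\ref{eqn5}) as $Ddf=\frac{f}{m}z+\frac{\Delta f}{n}g$. Since $\omega=0$ gives $z(\nabla f,X)=0$ for every $X$ orthogonal to $\nabla f$, we get $\langle D_{\nabla f}\nabla f,X\rangle=Ddf(\nabla f,X)=0$ for all such $X$, so $D_{\nabla f}\nabla f$ is a multiple of $\nabla f$. Hence $D_N\nabla f=\frac{1}{|\nabla f|}D_{\nabla f}\nabla f$ is a multiple of $N$, and therefore so is $D_NN=\frac{1}{|\nabla f|}D_N\nabla f+N\!\left(\frac{1}{|\nabla f|}\right)\nabla f$; as $\langle D_NN,N\rangle=\frac12 N(|N|^2)=0$, we conclude $D_NN=0$ on $M\setminus\mathrm{Crit}(f)$. (One then also notes that $N^\flat=df/|\nabla f|$ is closed, since $|\nabla f|$ is constant on level sets, so locally $f$ is a function of an arc-length parameter along the $N$-geodesics; this is consistent with, but not needed for, what follows.)

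For the constancy of $\alpha$ along level hypersurfaces I would extract two first-order identities from $i_{\nabla f}z=\alpha\,df$. By (\ref{eqn2021-3-17-1}), $\omega=0$ forces $\tilde i_{\nabla f}C=0$ on $\{f\neq 0\}$, hence everywhere by continuity; feeding this together with $i_{\nabla f}z=\alpha\,df$ into Lemma~\ref{lem31} gives $d\alpha\wedge df=\frac{n-2}{2n(n-1)}\,df\wedge ds$, that is, $\bigl(d\alpha+\frac{n-2}{2n(n-1)}ds\bigr)\wedge df=0$, so $X(\alpha)=-\frac{n-2}{2n(n-1)}X(s)$ for every $X$ tangent to a level set. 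Next, substituting $i_{\nabla f}z=\alpha\,df$ into Lemma~\ref{lem2021-4-19-100} and collecting the $df$-terms yields $K\,df=f\bigl(-\tfrac12 ds+(n-1)d\lambda\bigr)$ with $K:=(m-1)\alpha+\tfrac{m+n-1}{n}s-(n-1)\lambda$; evaluating on $X$ tangent to a level set gives $X(\lambda)=\tfrac{1}{2(n-1)}X(s)$, and on $\{f\neq0\}$ we may rewrite this as $d\lambda=\tfrac{1}{2(n-1)}ds+\tfrac{K}{(n-1)f}\,df$.

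The decisive step is to invoke the integrability condition $d^2\lambda=0$: applying $d$ to the last equality (and using $d(ds)=d(df)=0$) forces $dK\wedge df=0$ on $\{f\neq0\}$, i.e. $X(K)=0$ for every $X$ tangent to a level set. But $X(K)=(m-1)X(\alpha)+\tfrac{m+n-1}{n}X(s)-(n-1)X(\lambda)$, and inserting the two relations just found collapses this to $X(K)=\tfrac{m+n-2}{2(n-1)}X(s)$. Since $m+n-2>0$, we obtain $X(s)=0$, and then $X(\alpha)=0$, for every $X$ tangent to a level hypersurface of $f$; thus $s$ and $\alpha$ are both constant on each level hypersurface. (If one is content to use the constant–scalar–curvature hypothesis of the main theorems, the first identity $X(\alpha)=-\tfrac{n-2}{2n(n-1)}X(s)$ already suffices; the $d^2\lambda$ argument shows that hypothesis is not needed here.)

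The $D_NN=0$ part is immediate; the genuine difficulty is that $\omega=0$ alone does not control $z(\nabla f,\nabla f)$ in directions tangent to the level sets, so one must combine the Cotton identity (Lemma~\ref{lem31}) with the contracted second Bianchi identity (Lemma~\ref{lem2021-4-19-100}) and then close the system via $d^2\lambda=0$ — I expect this to be the main obstacle. A secondary, routine point is the behaviour near $\mathrm{Crit}(f)$ and near $f^{-1}(0)$: there $\alpha$ is a priori only $C^0$ and one must check that $K/f$ stays bounded, which it does because $K$ and $f$ both vanish on the smooth hypersurface $f^{-1}(0)$ where $df\neq0$; hence all the identities above extend onto every level hypersurface by continuity.
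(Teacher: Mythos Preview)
Your argument is correct. For $D_NN=0$ it is essentially the paper's proof, phrased more economically: both deduce from $\omega=0$ that $D_{\nabla f}\nabla f$ is parallel to $\nabla f$ and then conclude from $\langle D_NN,N\rangle=0$.

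For the constancy of $\alpha$ along level sets your route is genuinely different and, in fact, more complete than the paper's. The paper argues directly from ${\tilde i}_{\nabla f}C=0$: after checking $D_Nz(X,\nabla f)=0$ (this uses $D_NN=0$), it writes
\[
0=C(X,N,\nabla f)=D_Xz(N,\nabla f)-D_Nz(X,\nabla f)=|\nabla f|\,X(\alpha).
\]
In passing from $C$ to $d^Dz$, however, the $\frac{n-2}{2n(n-1)}\,ds\wedge g$ contribution in (\ref{eqn2020-11-12-2}) is silently dropped; carried through, it adds $\frac{n-2}{2n(n-1)}|\nabla f|\,X(s)$, so the displayed line really only yields
\[
X(\alpha)+\tfrac{n-2}{2n(n-1)}X(s)=0,
\]
which is exactly your first identity (you obtain it instead via Lemma~\ref{lem31}). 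You then go further: feeding $i_{\nabla f}z=\alpha\,df$ into Lemma~\ref{lem2021-4-19-100} and invoking the integrability condition $d^2\lambda=0$, you force $X(s)=0$ on each level set, whence $X(\alpha)=0$. Thus your approach establishes the lemma exactly as stated, without any hypothesis on $s$; the paper's shortcut becomes valid once $X(s)=0$ is known, which is harmless for the subsequent applications (all of which assume $s$ constant) but is not available at this stage of the argument.
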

\begin{proof}
Note that $\dis{N(|\n f|) = Ddf(N, N) = \frac{\a f}{m} +  \frac{\Delta f}{n}}$ and
$$
N\left(\frac{1}{|\n f|}\right) = - \frac{1}{|\n f|^2} \left(\frac{\a f}{m}+ \frac{\Delta f}{n}\right).
$$
Since $i_N z = \a \n f$ as a vector field,  we have
$$
D_NN = N\left(\frac{1}{|\n f|}\right) \n f + \frac{1}{|\n f|} D_N df =0.
$$
Now, let $X$ be a vector field orthogonal to $\n f$. 
Since $D_NN=0$, we have
$g(D_NX, N) = - g(X, D_NN) = 0$ and so
\bea
D_N z(X, \n f) = - z(D_N X, \n f)-z(X, D_N \n f) = 0.\label{eqn2020-8-26-1}
\eea
Since $\tilde{i}_{\nabla f}C=0$ by (\ref{eqn2021-3-17-1}), we have
$$
0=C(X, N, \nabla f)=D_X z(N, \nabla f)-D_{N}z(X, \nabla f) = D_X z(N, \nabla f)=|\nabla f|X(\a),
$$
implying that $\a $ is a constant on $f^{-1}(t)$.

\end{proof}

\begin{remark}
The property $D_NN=0$ also implies that $ [X, N]$ is orthogonal to $\n f$. Using this, one can
 show that $|z|^2$ is also constant along each level hypersurface $f^{-1}(t)$ of $f$.
\end{remark}

\begin{lem}\label{lem2021-5-15-6}
Let $(M^n, g, f)$ be a  closed generalized $(\lambda, n+m)$-Einstein 
manifold with $f^{-1}(0) \ne \emptyset$.
 Assume that $\o = 0$. Then $\a$ is constant and, in particular,
$\langle \n s, \n f\rangle = 0$ on the set  $f^{-1}(0)$.
\end{lem}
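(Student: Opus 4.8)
The plan is to unpack the hypothesis $\omega=0$ into its two consequences — that $i_{\nabla f}z=\alpha\,df$ with $\alpha=z(N,N)$, and, by (\ref{eqn2021-3-17-1}), that $f\,\tilde i_{\nabla f}C=0$ and hence $\tilde i_{\nabla f}C=0$ on all of $M$ (it vanishes on the dense open set $\{f\neq0\}$ and $C$ is continuous) — and then to run everything on a collar of $f^{-1}(0)$, which is legitimate since $f^{-1}(0)$ has no critical points by Lemma~\ref{prop2021-5-15-2}.

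First I would insert $\tilde i_{\nabla f}C=0$ into Lemma~\ref{lem31}, obtaining $d\big(i_{\nabla f}z\big)=\frac{n-2}{2n(n-1)}\,df\wedge ds$; since $i_{\nabla f}z=\alpha\,df$, the left-hand side is $d\alpha\wedge df$, and $d\alpha\wedge df=0$ because $\alpha$ is constant on each level hypersurface by Lemma~\ref{lem2021-4-20-5}. Hence $ds\wedge df=0$ near $f^{-1}(0)$, so $s$ and $\alpha$ — and, using Lemma~\ref{lem2021-4-19-100} to see that $d\mu$ is then a multiple of $df$, also $\mu$ and $\lambda$ — are functions of $f$ there; in particular each of them is constant on every connected component of $f^{-1}(0)$. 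Evaluating Lemma~\ref{lem2021-4-19-100} on $f^{-1}(0)$, where $f=0$, $i_{\nabla f}z=\alpha\,df$ and $d\mu=(\lambda-\frac{s}{n-1})\,df$, and comparing the coefficients of $df$, gives the relation $(m-1)\alpha+\frac{m-1}{n}s=(n-1)\lambda-s$ on $f^{-1}(0)$, equivalently $(n-1)\lambda-\frac{m+n-1}{n}s=(m-1)\alpha$ there.

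Next I would take $\delta$ of $i_{\nabla f}z=\alpha\,df$. Using $\delta z=-\frac{n-2}{2n}ds$ from (\ref{eqn2020-12-2-6}) and $\langle Ddf,z\rangle=\frac{f}{m}|z|^2$ from (\ref{eqn5}), this yields $\langle\nabla\alpha,\nabla f\rangle+\alpha\,\Delta f=\frac{f}{m}|z|^2+\frac{n-2}{2n}\langle\nabla s,\nabla f\rangle$, which on $f^{-1}(0)$ (where $f=0=\Delta f$) reduces to $N(\alpha)=\frac{n-2}{2n}N(s)$. It then remains to prove $N(s)=0$ on $f^{-1}(0)$, which by this relation also gives $N(\alpha)=0$, and that $\alpha$ has a single value on $f^{-1}(0)$. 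My proposal here is to apply the divergence theorem to the identity for $\Delta\big((n-1)\mu-\frac{m-1}{n}sf\big)$ — the general, non–constant-scalar-curvature analogue of (\ref{eqn2021-5-15-6}) obtained by taking $\delta$ in Lemma~\ref{lem2021-4-19-100} — over $\{f\ge0\}$ and over $\{f\le0\}$. The integrand vanishes on $f^{-1}(0)$, and a short computation using the relation from the previous paragraph shows that its normal derivative there equals $(m-1)\alpha\,|\nabla f|$; so both boundary terms reduce to $\int_{f^{-1}(0)}(m-1)\alpha\,|\nabla f|$, and equating these with the bulk integrals — which, since $ds\wedge df=0$ near $f^{-1}(0)$, can be arranged so that the surviving gradient terms are radially flat — should force $\alpha$ to be constant along $f^{-1}(0)$ and $\langle\nabla s,\nabla f\rangle=N(s)\,|\nabla f|=0$ on $f^{-1}(0)$.

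The steps through $N(\alpha)=\frac{n-2}{2n}N(s)$ are routine tensor calculus with identities already in the paper; the genuine obstacle is the last step — arranging the $\Delta\big((n-1)\mu-\frac{m-1}{n}sf\big)$ identity so that its bulk integral is sign-definite (or collapses to a quantity like $\int_M|C|^2$ that is already known to vanish), and handling the case in which $f^{-1}(0)$, or the regular set $M\setminus\mathrm{Crit}(f)$, is disconnected, so that the ``function of $f$'' descriptions on the separate pieces must be matched up.
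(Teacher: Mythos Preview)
Your preliminary steps --- extracting $i_{\nabla f}z=\alpha\,df$ and $\tilde i_{\nabla f}C=0$ from $\omega=0$, feeding these into Lemma~\ref{lem31}, and taking the divergence of $i_{\nabla f}z=\alpha\,df$ --- are exactly the calculations the paper performs. Where you part company is at the step you yourself flag as the genuine obstacle. The paper does \emph{not} pass through any integral argument over $\{f\ge0\}$ and $\{f\le0\}$: it works pointwise. From $\tilde i_{\nabla f}C=0$ together with the explicit frame computation behind Lemmas~\ref{lem31}--\ref{lem32}, the paper reads off that $d\alpha=\frac{n-2}{2n(n-1)}\sum_{k\ge2}s_k\,\theta^k$ in the adapted frame with $e_1=N$, and concludes directly that $\langle\nabla\alpha,\nabla f\rangle=0$ on the regular set. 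Combined with Lemma~\ref{lem2021-4-20-5} this makes $\alpha$ globally constant. Once $\alpha$ is constant, your divergence identity collapses to the paper's (\ref{eqn2021-5-15-11}), namely $\frac{n-2}{2n}\langle\nabla s,\nabla f\rangle=\alpha\,\Delta f-\frac{f}{m}|z|^2$, and evaluating on $f^{-1}(0)$ (where $f=\Delta f=0$) yields $\langle\nabla s,\nabla f\rangle=0$ in one line --- no divergence theorem, no matching across components of $f^{-1}(0)$.

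So the gap you acknowledge is real, and the integral route you sketch is not how the lemma is closed. You already possess every identity the paper uses; what you are missing is the pointwise conclusion $N(\alpha)=0$ drawn from the $d\alpha$ computation, after which the second assertion is a one-line corollary rather than a separate problem. Your relation $N(\alpha)=\frac{n-2}{2n}N(s)$ on $f^{-1}(0)$ then becomes a trivial consequence, not a stepping stone toward the result.
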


\begin{proof}
To show $\a$ is constant, it suffices to prove $\langle \n\alpha, \n f\rangle =0$ on $M$  from
Lemma~\ref{lem2021-4-20-5}. 
Since ${\tilde i}_{\n f}C = 0$ by (\ref{eqn2021-3-17-1}) and  $di_{\n f}z = d\a\wedge df$,  
we can see,  in the proof of Lemma~\ref{lem31} and Lemma~\ref{lem32}, that
$$
d\a = \frac{n-2}{2n(n-1)}  \sum_{k=2}^n s_{k} \theta^k,
$$
which implies that $\langle \n \a, \n f\rangle = 0$. (Note that by taking $N = e_1$, we have
$f_1= |\n f|, f_k = 0$ for $k \ge 2$.)

Now since $\a$ is constant and $i_{\n f}z = \a df$, we have
$\d i_{\n f}z =  - \a\Delta f$.
Also, from the definition of divergence, we have
\bea
\d i_{\n f}z  = - \frac{n-2}{2n} \langle \n s, \n f\rangle - \frac{f}{m}|z|^2,\label{eqn2021-5-31-1}
\eea
and hence
\be
\frac{n-2}{2n} \langle \n s, \n f\rangle = \a\Delta f - \frac{f}{m}|z|^2.\label{eqn2021-5-15-11}
\ee
In particular, we have $\langle \n s, \n f\rangle =0$ on the set $f^{-1}(0)$ since
$\Delta f = 0$ on $f^{-1}(0)$.
\end{proof}

\begin{lem}\label{cor2021-5-15-7}
Let $(M^n, g, f)$ be a  closed generalized $(\lambda, n+m)$-Einstein 
manifold with $f^{-1}(0) \ne \emptyset$.
 Assume that $\o = 0$. If $\langle \n s, \n f\rangle \ge 0$  on $M$, then $\a$ is nonpositive constant. 
\end{lem}

\begin{proof}
First, $\a$ is constant by Lemma~\ref{lem2021-5-15-6}.
From (\ref{eqn2021-5-15-11}), we have the following  inequality
\bea
\frac{n-2}{2n}\int_{f>0} \langle \n s, \n f\rangle = - \a \int_{f=0}|\n f| 
- \frac{1}{m}\int_{f>0} f|z|^2,\label{eqn2021-5-15-10}
\eea
which shows that $\a$ is nonpositive on the set $f^{-1}(0)$. Since $\a$ is constant, $\a \le 0$ on the whole $M$. 
\end{proof}

\begin{lem}
Let $(M^n, g, f)$ be a closed generalized $(\lambda, n+m)$-Einstein manifold with $f^{-1}(0) \ne \emptyset$.
 Assume that $\o = 0$. 
Then $\n s$ is parallel to $\n f$. In particular $\n s = 0$ on the set $f^{-1}(0)$.
\end{lem}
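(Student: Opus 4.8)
The plan is to read the statement off from Lemma~\ref{lem31}, using that the function $\a$ is a constant by Lemma~\ref{lem2021-5-15-6}. Recall that $\o=0$ already forces $i_{\n f}z=\a\,df$ with $\a=z(N,N)$ (and $i_{\n f}z=0$ wherever $\n f=0$, so in fact $i_{\n f}z=\a\,df$ holds on all of $M$), and that $\tilde i_{\n f}C=0$ on $M$ by (\ref{eqn2021-3-17-1}). Since $\a$ is constant, $d(i_{\n f}z)=d\a\wedge df=0$. Plugging $\tilde i_{\n f}C=0$ and $d(i_{\n f}z)=0$ into the identity of Lemma~\ref{lem31},
$$
\tilde i_{\n f}C \;=\; d\,i_{\n f}z-\frac{n-2}{2n(n-1)}\,df\wedge ds ,
$$
we obtain $df\wedge ds=0$ on $M$. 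Equivalently, in the orthonormal frame $\{e_i\}$ with $e_1=N=\n f/|\n f|$ used before, $e_k(s)=0$ for $2\le k\le n$; hence $\n s$ is a multiple of $\n f$ (the statement being vacuous where $\n f=0$).

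For the final assertion, restrict attention to $f^{-1}(0)$. By Lemma~\ref{prop2021-5-15-2} there are no critical points of $f$ on $f^{-1}(0)$, so $|\n f|>0$ there, and the first part lets us write $\n s=\beta\,\n f$ along $f^{-1}(0)$ for some function $\beta$. On the other hand Lemma~\ref{lem2021-5-15-6} gives $\la\n s,\n f\ra=0$ on $f^{-1}(0)$; therefore $\beta\,|\n f|^2=0$, so $\beta=0$, and $\n s=0$ on $f^{-1}(0)$.

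There is essentially no real obstacle here: the lemma merely isolates a consequence of the computation already carried out in the proof of Lemma~\ref{lem2021-5-15-6}, where one obtains $d\a=\frac{n-2}{2n(n-1)}\sum_{k=2}^n s_k\,\theta^k$; knowing that the left side vanishes immediately yields $s_k=0$ for $k\ge2$. The only point needing a line of justification is that $\tilde i_{\n f}C=0$ holds on all of $M$, not merely on $\{f\ne0\}$ --- this follows by continuity, $f^{-1}(0)$ being a hypersurface. The lemma will be used in the next step, together with the geometry of the level sets, to pin down the critical set of $f$.
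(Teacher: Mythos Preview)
Your proof is correct. Both you and the paper deduce $df\wedge ds=0$ from the constancy of $\alpha$ (Lemma~\ref{lem2021-5-15-6}), and then finish identically on $f^{-1}(0)$ using $\langle\n s,\n f\rangle=0$ there together with $\n f\ne 0$.

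The only difference is which identity you feed the constancy of $\alpha$ into. The paper takes the exterior derivative of (\ref{eqn2021-4-19-15}) in Lemma~\ref{lem2021-4-19-100}, obtaining $\left(\frac{m-1}{n}+\frac12\right)ds\wedge df=0$. You instead use Lemma~\ref{lem31}, plugging in $\tilde i_{\n f}C=0$ and $d(i_{\n f}z)=d(\a\,df)=0$ to get $\frac{n-2}{2n(n-1)}\,df\wedge ds=0$. Your route is arguably more direct, since (as you note) it simply reads off the conclusion from the computation $d\a=\frac{n-2}{2n(n-1)}\sum_{k\ge2}s_k\theta^k$ already performed inside Lemma~\ref{lem2021-5-15-6}; the paper's route invokes a separate divergence identity. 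Either way the content is the same, and your continuity remark for $\tilde i_{\n f}C=0$ across $f^{-1}(0)$ is the right justification.
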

\begin{proof}
Since $i_{\n f}z = \a df$ and $\a$ is constant,  by taking the differential operator $d$ of
 (\ref{eqn2021-4-19-15}), we obtain
$$
\left(\frac{m-1}{n} + \frac12 \right) ds \wedge df =0,
$$
which shows $\n s$ is parallel to $\n f$. In particular, since $\langle \n s, \n f\rangle =0$
on {$f^{-1}(0)$ by  Lemma~\ref{lem2021-5-15-6}}, we have
$\n s = 0$ on the set $f^{-1}(0)$ because $\n f \ne 0$ on $f^{-1}(0)$ by  Lemma~\ref{prop2021-5-15-2}.

\end{proof}

\begin{lem}\label{lem2021-5-4-1} 
Let $(M^n, g, f)$ be a  closed generalized $(\lambda, n+m)$-Einstein  manifold.
 Assume that  $\o = 0$ and $s$ is  a (positive) constant. 
Then we have
\be
\lambda = \frac{m+n-1}{n(n-1)}s + \frac{m-1}{n-1}\a. \label{eqn2021-5-4-2}
\ee
In particular, $\lambda$ is a positive constant and  $s < n \lambda$ on $M$.
\end{lem}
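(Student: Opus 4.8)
The plan is to push the hypotheses $\omega=0$ and $s\equiv\mathrm{const}>0$ through the first-order identity of Lemma~\ref{lem2021-4-19-100} and integrate. Since $\omega=0$, relation \eqref{eqn2021-3-17-1} gives $i_{\nabla f}z=\alpha\,df$ with $\alpha=z(N,N)$, and Lemma~\ref{lem2021-5-15-6} tells us that $\alpha$ is a constant. Substituting $i_{\nabla f}z=\alpha\,df$ and $ds=0$ into \eqref{eqn2021-4-19-15} collapses that identity to
\[
(n-1)\,d\mu \;=\; (m-1)\Bigl(\alpha+\tfrac{s}{n}\Bigr)\,df ,
\]
where $\mu=\lambda f-\frac{s}{n-1}f$ as in Lemma~\ref{lem2021-4-19-100}.

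Because the coefficient $(m-1)(\alpha+\frac{s}{n})$ on the right is now a constant, this says $d\bigl(\mu-\frac{m-1}{n-1}(\alpha+\frac{s}{n})f\bigr)=0$, so $\mu-\frac{m-1}{n-1}(\alpha+\frac{s}{n})f$ is a constant on $M$. Evaluating on the hypersurface $f^{-1}(0)$ (nonempty by the standing assumption, and a genuine hypersurface by Lemma~\ref{prop2021-5-15-2}), where both $\mu$ and $f$ vanish, forces that constant to be $0$; hence
\[
\mu \;=\; \frac{m-1}{n-1}\Bigl(\alpha+\tfrac{s}{n}\Bigr)f \qquad\text{on }M .
\]
Comparing with $\mu=(\lambda-\frac{s}{n-1})f$ and cancelling $f$ on the dense open set $\{f\neq0\}$ (dense precisely because $f^{-1}(0)$ is a hypersurface, again Lemma~\ref{prop2021-5-15-2}), I would get
\[
\lambda \;=\; \frac{s}{n-1}+\frac{(m-1)s}{n(n-1)}+\frac{m-1}{n-1}\alpha \;=\; \frac{m+n-1}{n(n-1)}\,s+\frac{m-1}{n-1}\,\alpha ,
\]
which is exactly \eqref{eqn2021-5-4-2}; by continuity of $\lambda$ this holds on all of $M$, and in particular $\lambda$ is a constant.

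It remains to record positivity and the strict inequality. By Lemma~\ref{lem2021-5-15-1} the set $\{f>0\}\cap\{\lambda\ge\frac{s}{n}\}$ is nonempty, and since $\lambda$ and $s$ are now constants this means $\lambda\ge\frac{s}{n}>0$, using $s>0$; thus $\lambda$ is a positive constant. Finally, if one had $s=n\lambda$, then \eqref{eqn2} would yield $\Delta f=\frac{f}{m}(s-n\lambda)=0$, so the harmonic function $f$ would be constant on the closed manifold $M$, contradicting $\min_M f<0<\max_M f$; hence $s<n\lambda$ on $M$.

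I do not expect a serious obstacle here: the argument is one substitution, one integration, and bookkeeping. The only point that needs a little care is the passage from "$d(\,\cdot\,)=0$" to "$(\,\cdot\,)\equiv0$" — it genuinely uses $f^{-1}(0)\neq\emptyset$ to pin the constant of integration, the constancy of $\alpha$ from Lemma~\ref{lem2021-5-15-6}, and the density of $\{f\neq0\}$ (so that cancelling $f$ and then invoking continuity of $\lambda$ is legitimate), the last of which rests on $f^{-1}(0)$ being an honest hypersurface.
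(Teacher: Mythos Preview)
Your proof is correct and follows essentially the same route as the paper: substitute $i_{\nabla f}z=\alpha\,df$ (with $\alpha$ constant) and $ds=0$ into the identity of Lemma~\ref{lem2021-4-19-100}, integrate, pin the constant via $f^{-1}(0)\ne\emptyset$, and read off \eqref{eqn2021-5-4-2}. For the final clause the paper argues in one stroke that $s-n\lambda\ge 0$ would make $f$ subharmonic on $\{f\ge 0\}$, contradicting the maximum principle; your route via Lemma~\ref{lem2021-5-15-1} plus the harmonic case $s=n\lambda$ reaches the same conclusion and is equally valid.
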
 
\begin{proof}
Since $i_{\n f}z = \a df$ and $\a$ is constant, it follows from Lemma~\ref{lem2021-4-19-100} that the function
$$
(m-1)\a f+ \frac{m-1}{n}sf - (n-1)\mu 
$$
is constant on $M$. Considering the set $f^{-1}(0)$, it must be vanishing. Finally, substituting $\mu = \lambda f - \frac{s}{n-1}f$, 
we obtain
$$
\lambda = \frac{m+n-1}{n(n-1)}s + \frac{m-1}{n-1}\a.
$$
In particular, $\lambda$ is constant. If $\lambda \le 0$, 
from $\Delta f = \frac{f}{m}(s-n \lambda)$, the function $f$ is subharmonic 
on the set $f \ge 0$, which is a contradiction to the maximum principle.
\end{proof}

\begin{cor}\label{cor2025-2-19-1}
Let $(M^n, g, f)$ be a closed generalized $(\lambda, n+m)$-Einstein manifold of constant scalar curvature
with $f^{-1}(0) \ne \emptyset.$
If $z(\n f, \n f) = 0$ on $M$, then $(M, g)$ is isometric to a sphere ${\Bbb S}^n$.

\end{cor}
\begin{proof}
If $\a = 0$, then  we have
$z = 0$ from (\ref{eqn2021-5-15-11}) on the set $f >0$. By elliptic theory, we have $z = 0$ on the whole $M$ which means $(M, g)$ is Einstein.
The equation (\ref{eqn1}) becomes
\be
mDdf = \left(\frac{s}{n}-\lambda\right) fg.\label{eqn2025-2-19-2}
\ee
By Obata's result in \cite{oba}, $M$ is isometric to a sphere ${\Bbb S}^n$ since $\frac{s}{n}-\lambda <0$ by Lemma~\ref{lem2021-5-4-1}.
\end{proof}

\begin{lem}\label{lem2021-5-31-2}
 Let $(M^n, g, f)$ be a  closed generalized $(\lambda, n+m)$-Einstein  manifold with 
 $f^{-1}(0) \ne \emptyset$. If the scalar curvature $s$ is constant and  $\o=0$,  we have
 $$
 {\rm Ric}(N,  N) \ge 0
 $$
on the set $M$.
 \end{lem}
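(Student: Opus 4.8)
The plan is to express ${\rm Ric}(N,N) = r(N,N)$ in terms of the quantities we have already controlled under the hypotheses $\omega = 0$ and $s$ constant. Since $r = z + \frac{s}{n}g$ and $z(N,N) = \a$, we have $r(N,N) = \a + \frac{s}{n}$, so the claim is equivalent to $\a \ge -\frac{s}{n}$. By Lemma~\ref{lem2021-5-31-2}'s predecessors, $\a$ is a constant, $s > 0$ is a positive constant (Proposition~\ref{prop2021-4-19-18}), and by Lemma~\ref{lem2021-5-4-1} we have $\lambda = \frac{m+n-1}{n(n-1)}s + \frac{m-1}{n-1}\a$, a positive constant. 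So the whole problem reduces to a lower bound on the single constant $\a$.

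The first step is to pin down $\a$ by exploiting the equation $fz = m\,\mathring{Ddf}$ together with what we know at the zero set and at the critical points. Since $i_{\n f}z = \a\,df$ and $\a$ is constant, equation~(\ref{eqn5}) gives, along the integral curves of $N$, a second-order ODE for $f$; restricting~(\ref{eqn5}) to $(N,N)$ yields $Ddf(N,N) = \frac{\a f}{m} + \frac{\Delta f}{n}$, while equation~(\ref{eqn2}) gives $\Delta f = \frac{f}{m}(s - n\lambda)$. Substituting the value of $\lambda$ from~(\ref{eqn2021-5-4-2}) produces $\Delta f = -\left(\frac{s}{m(n-1)} + \frac{n-1}{?}\cdots\right)f$ — a negative multiple of $f$ — and hence $f$ is superharmonic on $\{f > 0\}$ and subharmonic on $\{f < 0\}$, consistent with $a < 0 < b$. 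The key quantitative input should be integrating~(\ref{eqn2021-5-15-11}), namely $\frac{n-2}{2n}\langle \n s, \n f\rangle = \a\,\Delta f - \frac{f}{m}|z|^2$; with $s$ constant the left side vanishes, so $\a\,\Delta f = \frac{f}{m}|z|^2$ pointwise. Evaluating on $\{f > 0\}$ where $\Delta f = \frac{f}{m}(s - n\lambda)$ and $f > 0$, this forces $\a(s - n\lambda) = |z|^2 \ge 0$; since $s < n\lambda$ by Lemma~\ref{lem2021-5-4-1}, we get $\a \le 0$, recovering Corollary~\ref{cor2021-5-15-7}, and moreover $|z|^2 = \a(s-n\lambda)$ is a constant on $\{f>0\}$ (indeed on all of $M$).

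From $|z|^2 = \a(s - n\lambda)$ and the fact that $z$ is radially flat (so on the orthogonal complement of $N$ the tensor $z$ has eigenvalue $-\frac{\a}{n-1}$ each, by the $T = 0$ computation — or directly since ${\rm tr}\,z = 0$ and $z(N,\cdot) = \a\,df$), we get $|z|^2 = \a^2 + (n-1)\cdot\frac{\a^2}{(n-1)^2} = \frac{n}{n-1}\a^2$. Combining, $\frac{n}{n-1}\a^2 = \a(s - n\lambda)$, hence either $\a = 0$ (in which case $M$ is Einstein, a sphere, and $r(N,N) = \frac{s}{n} > 0$), or $\frac{n}{n-1}\a = s - n\lambda < 0$, giving $\a = \frac{n-1}{n}(s - n\lambda)$. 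Plugging back the value of $\lambda$ and solving for $\a$ in terms of $s$ should give a clean expression; the claim $\a \ge -\frac{s}{n}$, i.e. $r(N,N) \ge 0$, then becomes an explicit inequality among $m, n, s > 0$, which I expect to hold with equality exactly in the ${\Bbb S}^1 \times {\Bbb S}^{n-1}$ case.

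The main obstacle I anticipate is the bookkeeping in the last step: after substituting $\lambda = \frac{m+n-1}{n(n-1)}s + \frac{m-1}{n-1}\a$ into the relation $\frac{n}{n-1}\a = s - n\lambda$, one must solve a linear equation for $\a$ and then verify the resulting value satisfies $\a + \frac{s}{n} \ge 0$; a sign error here would be fatal, and it is conceivable the inequality is only non-strict, which is why the statement is phrased with $\ge$ rather than $>$. An alternative route, should the algebra misbehave, is to apply the maximum principle directly to $|\n f|^2$ or to $f^2$ using the Bochner formula, extracting $r(N,N)$ as the curvature term and using that $f$ attains an interior max where $\Delta(f^2) \le 0$; but the ODE/algebraic route above is more direct given everything already established.
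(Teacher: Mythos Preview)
Your strategy is different from the paper's and, with one correction, it works. The paper argues by contradiction using a stability/Fredholm alternative argument on the totally geodesic hypersurface $\Gamma \subset f^{-1}(0)$: if ${\rm Ric}(N,N)<0$, one produces a positive eigenfunction $\varphi$ of $\Delta^\Gamma + {\rm Ric}(N,N)$ on $\Gamma$, and the maximum principle forces $\varphi$ constant, a contradiction. Your route is purely algebraic, combining $|z|^2 = \a(s-n\lambda)$ with the expression for $\lambda$ from Lemma~\ref{lem2021-5-4-1}.

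The gap is your claim that ``on the orthogonal complement of $N$ the tensor $z$ has eigenvalue $-\frac{\a}{n-1}$ each.'' Neither justification you offer is valid here: the $T=0$ statement is proved \emph{later} in the paper (Theorem~\ref{thm001} and Lemma~\ref{lem2019-6-22-1}) and in fact relies on the present lemma, so invoking it is circular; and ``directly since ${\rm tr}\,z=0$ and $z(N,\cdot)=\a\,df$'' only gives $\sum_{i\ge 2} z_{ii} = -\a$, not that each $z_{ii}$ equals $-\frac{\a}{n-1}$. So your asserted equality $|z|^2 = \frac{n}{n-1}\a^2$ is unjustified.

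The fix is simple: replace the equality by the Cauchy--Schwarz inequality on the tangential block,
\[
|z|^2 \;=\; \a^2 + \sum_{i,j\ge 2} z_{ij}^2 \;\ge\; \a^2 + \frac{1}{n-1}\Big(\sum_{i\ge 2} z_{ii}\Big)^2 \;=\; \frac{n}{n-1}\a^2.
\]
Now combine with $|z|^2 = \a(s-n\lambda)$: if $\a = 0$ we are done since $r(N,N)=\frac{s}{n}>0$; if $\a<0$, divide by $\a$ (reversing the inequality) to get $\a \ge \frac{n-1}{n}(s-n\lambda)$. Substituting $\lambda = \frac{m+n-1}{n(n-1)}s + \frac{m-1}{n-1}\a$ from Lemma~\ref{lem2021-5-4-1} gives $\frac{n-1}{n}(s-n\lambda) = -\frac{m}{n}s - (m-1)\a$, so the inequality becomes $m\a \ge -\frac{m}{n}s$, i.e.\ $\a \ge -\frac{s}{n}$, which is exactly ${\rm Ric}(N,N)\ge 0$. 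Note the anticipated ``bookkeeping'' in your last paragraph is in fact trivial once you use the inequality; equality holds precisely when the tangential block of $z$ is a multiple of the identity, i.e.\ when $T=0$, consistent with the ${\Bbb S}^1\times{\Bbb S}^{n-1}$ case.
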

 \begin{proof}
 Suppose, on the contrary,  that ${\rm Ric}(N, N) < 0$ at a point $x\in M$. Since ${\rm Ric}(N, N) = \a + \frac{s}{n}$ is constant
 by Lemma~\ref{lem2021-5-15-6},  ${\rm Ric}(N, N) < 0$ on the whole $M$. Considering a connected component
 $\Gamma$ of $f^{-1}(0)$ which is totally geodesic by Lemma~\ref{lem202-5-15-3}, we have
 $$
 \int_\Gamma \left[|\n^\Gamma \vp|^2 - {\rm Ric}(N, N)\vp^2\right] \ge 0
 $$
 for any smooth function $\vp$ defined on $\Gamma$.
 By Fredholm alternative (cf. \cite{f-s}, Theorem 1), 
there exists a  positive function $\vp>0$ on $\Gamma$ satisfying 
$$
\Delta^\Gamma \vp + {\rm Ric}(N, N)\vp =0.
$$
However,  it follows from the maximum principle that $\vp$ must be a constant which is impossible.
 \end{proof}
 
 \begin{lem}\label{eqn2021-5-31-1}
  Let $(M^n, g, f)$ be a  closed generalized $(\lambda, n+m)$-Einstein  manifold with 
  $f^{-1}(0) \ne \emptyset$. If the  scalar curvature $s$ is constant and  $\o = 0$, then
  $$
  {\rm Ric}(N,  N) = \frac{(n-1)\lambda - s}{m-1}.
  $$
  In particular, we have $(n-1)\lambda  \ge s$.
 \end{lem}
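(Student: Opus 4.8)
The plan is to start from Lemma~\ref{lem2021-4-19-100}, which in the present situation (where $\o=0$, so $i_{\n f}z=\a df$ with $\a$ constant, and $s$ constant) collapses into a purely algebraic relation. Indeed, substituting $i_{\n f}z=\a\,df$ and $ds=0$ into \eqref{eqn2021-4-19-15} gives
$$
(m-1)\a\,df + \frac{m-1}{n}s\,df = (n-1)\,d\mu,
$$
so that $\mu = \lambda f - \frac{s}{n-1}f$ satisfies $(n-1)\mu = \left((m-1)\a+\frac{m-1}{n}s\right)f$ up to a constant, and since $f^{-1}(0)\ne\emptyset$ that additive constant vanishes. This is exactly the computation already carried out in Lemma~\ref{lem2021-5-4-1}, yielding
$$
\lambda = \frac{m+n-1}{n(n-1)}s + \frac{m-1}{n-1}\a.
$$
The point is then simply to solve this identity for $\a$ and recognize the answer in terms of $\mathrm{Ric}(N,N)$.

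First I would isolate $\a$: from the displayed equation,
$$
\frac{m-1}{n-1}\a = \lambda - \frac{m+n-1}{n(n-1)}s,
\qquad\text{hence}\qquad
\a = \frac{(n-1)\lambda}{m-1} - \frac{m+n-1}{n(m-1)}s.
$$
Next I would recall the elementary fact, already used in the proof of Lemma~\ref{lem2021-5-31-2}, that $\mathrm{Ric}(N,N) = z(N,N) + \frac{s}{n} = \a + \frac{s}{n}$. Adding $\frac{s}{n}$ to the expression for $\a$ and combining the two $s$-terms over the common denominator $n(m-1)$, one gets
$$
\mathrm{Ric}(N,N) = \a + \frac{s}{n}
= \frac{(n-1)\lambda}{m-1} - \frac{m+n-1}{n(m-1)}s + \frac{s}{n}
= \frac{(n-1)\lambda}{m-1} - \frac{(m+n-1) - (m-1)}{n(m-1)}s
= \frac{(n-1)\lambda - s}{m-1},
$$
since $(m+n-1)-(m-1) = n$ cancels the factor $n$ in the denominator. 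This is the asserted formula.

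Finally, for the last clause, I would invoke Lemma~\ref{lem2021-5-31-2}, which gives $\mathrm{Ric}(N,N)\ge 0$ on $M$ under precisely the present hypotheses (scalar curvature constant and $\o=0$, with $f^{-1}(0)\ne\emptyset$); combined with $m>1$ so that $m-1>0$, the identity $\mathrm{Ric}(N,N) = \frac{(n-1)\lambda - s}{m-1}$ forces $(n-1)\lambda - s \ge 0$, i.e. $(n-1)\lambda \ge s$. I do not anticipate a genuine obstacle here — the whole statement is a short consequence of already-established lemmas — the only thing to be careful about is tracking the constants correctly in the arithmetic that produces the cancellation $(m+n-1)-(m-1)=n$, and making sure the extension of $\a$ to a $C^0$ (hence, once shown constant, genuinely constant) function on all of $M$ is legitimate, which was noted at the start of Section~5.
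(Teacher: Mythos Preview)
Your argument is correct. You rely on the algebraic identity of Lemma~\ref{lem2021-5-4-1} (itself derived from Lemma~\ref{lem2021-4-19-100}), solve it for $\a$, and then add $\frac{s}{n}$ to pass from $\a=z(N,N)$ to $\mathrm{Ric}(N,N)$; the final inequality comes from Lemma~\ref{lem2021-5-31-2} exactly as you say.

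The paper takes a somewhat different, more structural route. It introduces the tensor $P=\mathrm{Ric}-\rho g$ with $\rho=\frac{(n-1)\lambda-s}{m-1}$ (following \cite{h-p-w}) and records the divergence identity
\[
\frac{f\,\nabla s}{2(m-1)} - \frac{(n-1)f\,\nabla\lambda}{m-1} + P(\nabla f,\cdot)=0,
\]
obtained from \eqref{eqn1}. With $s$ constant and $\lambda$ constant (via Lemma~\ref{lem2021-5-4-1}) this collapses to $P(\nabla f,\cdot)=0$, whence $\mathrm{Ric}(N,N)=\rho$ immediately. Your approach is more elementary in that it only cites lemmas already proved in the paper and does a line of arithmetic, while the paper's approach imports the $P$-tensor formalism, which is tidier conceptually (no cancellation to track) and yields the full statement $P(\nabla f,X)=0$ for every $X$ in one stroke. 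Both arguments ultimately rest on the same divergence identity \eqref{eqn2021-4-19-15} and the constancy of $\a$, $s$, $\lambda$.
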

 \begin{proof}
 Following \cite{h-p-w}, we define a tensor $P:= {\rm Ric} - \rho g$ with $\rho =\frac{(n-1)\lambda-s}{m-1}$. By taking the divergence of (\ref{eqn1}), we can obtain (cf. \cite{f-san})
$$
\frac{f\n s}{2(m-1)} - \frac{(n-1)f\n \lambda}{m-1} + P(\n f, \cdot) = 0.
$$
Since   $\lambda$ is also constant by  Lemma~\ref{lem2021-5-4-1}, we have
$$
P(\n f, \cdot) = 0.
$$
In particular, 
\bea
{\rm Ric}(N, N) = \rho  =\frac{(n-1)\lambda-s}{m-1}.\label{eqn2021-5-27-2-1}
\eea
 The  inequality $(n-1)\lambda  \ge s$ follows from Lemma~\ref{lem2021-5-31-2}.
 \end{proof}

\begin{lem}\label{lem2021-5-15-16} 
Let $(M^n, g, f)$ be a  closed generalized $(\lambda, n+m)$-Einstein 
manifold with $f^{-1}(0)\ne \emptyset$.
 Assume that  $\o = 0$ and $s$ is  a (positive) constant. 
Then there are no critical points of $f$ except at the minimum and maximum points of $f$.
\end{lem}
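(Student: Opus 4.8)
The plan is to produce a \emph{first integral} of $f$: I want to show that $|\n f|^2 + \kappa^2 f^2$ is constant on $M$ for a suitable positive constant $\kappa$. Once this is in hand, $\n f$ vanishes precisely where $f^2$ is maximal, i.e. precisely at the minimum and maximum level sets, and we are done. Before starting I would assemble what the earlier lemmas already give under the standing hypotheses $\o = 0$ and $s$ constant: from the definition introduced in this section, $i_{\n f}z = \alpha\, df$ with $\alpha = z(N,N)$, and by Lemma~\ref{lem2021-5-15-6} the function $\alpha$ is a \emph{constant}; by Lemma~\ref{lem2021-5-4-1}, $\lambda$ is a positive constant with $s < n\lambda$ strictly; by Corollary~\ref{cor2021-5-15-7} (applicable since $\langle \n s, \n f\rangle = 0$) we have $\alpha \le 0$; and $D_N N = 0$ by Lemma~\ref{lem2021-4-20-5}.

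The main computation is to contract the Hessian equation against $\n f$. From (\ref{eqn5}) one has $Ddf = \frac{f}{m}z + \frac{\Delta f}{n}g$; substituting (\ref{eqn2}) in the form $\Delta f = \frac{f}{m}(s-n\lambda)$ and using $i_{\n f}z = \alpha\, df$ gives
$$
Ddf(\n f, \cdot) = \left(\frac{\alpha}{m} + \frac{s-n\lambda}{mn}\right) f\, df =: c_0\, f\, df .
$$
Since $d(|\n f|^2) = 2\, Ddf(\n f, \cdot)$, this says $d\big(|\n f|^2 - c_0 f^2\big) = 0$, so $|\n f|^2 - c_0 f^2$ is constant on the connected manifold $M$. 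The one point that genuinely matters is the sign of $c_0$: because $s - n\lambda < 0$ strictly and $\alpha \le 0$, we have $n\alpha + s - n\lambda < 0$, hence $c_0 < 0$; writing $c_0 = -\kappa^2$ with $\kappa > 0$ we obtain $|\n f|^2 + \kappa^2 f^2 \equiv c$ for a constant $c$.

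To finish, evaluate this identity at a maximum point of $f$ (a critical point, since $M$ is closed): $c = \kappa^2 b^2$; and at a minimum point: $c = \kappa^2 a^2$. Hence $a = -b$ and $|\n f|^2 = \kappa^2(b^2 - f^2) = \kappa^2 (b - f)(b + f)$, where both factors $b - f$ and $b + f = f - a$ are nonnegative on $M$. Therefore $\n f(p) = 0$ if and only if $f(p) = b$ or $f(p) = a$, which is exactly the assertion. I do not expect a genuine obstacle: the conceptual reason the argument works is that, since $D_N N = 0$, the integral curves of $N = \n f/|\n f|$ are unit-speed geodesics along which $u(r) = f(\gamma(r))$ solves $u'' = c_0 u = -\kappa^2 u$, and $|\n f|^2 + \kappa^2 f^2$ is the conserved energy of this oscillator. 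The only step needing care is the \emph{strict} inequality $c_0 < 0$, which is precisely where the bound $s < n\lambda$ of Lemma~\ref{lem2021-5-4-1} enters; one should also note that $i_{\n f}z = \alpha\, df$ holds trivially at critical points of $f$ (both sides vanish), so contracting with $\n f$ is legitimate everywhere on $M$.
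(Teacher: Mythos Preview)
Your argument is correct and genuinely simpler than the paper's. You contract the Hessian equation against $\n f$ to obtain the conserved quantity $|\n f|^2 + \kappa^2 f^2 \equiv c$ and read off the critical set directly; the key sign check $c_0 < 0$ needs only $\a \le 0$ (Corollary~\ref{cor2021-5-15-7}) and the strict inequality $s < n\lambda$ (Lemma~\ref{lem2021-5-4-1}), both of which you invoke correctly. The paper instead first disposes of the case $\a = 0$ via Corollary~\ref{cor2021-5-15-7}, and for $\a < 0$ applies the Bochner--Weitzenb\"ock formula to derive
\[
\frac{1}{2}\Delta |\n f|^2 - \frac{m}{2f}\,\n f(|\n f|^2) = |Ddf|^2 + \left(\frac{s-n\lambda}{m} + \lambda\right)|\n f|^2,
\]
checks that the coefficient on the right is positive (which requires the further input $(n-1)\lambda \ge s$ from Lemma~\ref{eqn2021-5-31-1}), uses the maximum principle on $\{f>0\}$ to forbid interior local maxima of $|\n f|^2$, and then argues by contradiction that an intermediate critical level would force such a maximum. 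Your route avoids the Bochner formula, the maximum principle, the case split on $\a$, and the appeal to Lemma~\ref{eqn2021-5-31-1}; as a bonus it yields $a = -b$ and the explicit expression $|\n f|^2 = \kappa^2(b^2 - f^2)$, neither of which the paper records at this stage. The paper's approach has the compensating virtue of being more robust in situations where only a differential \emph{inequality} for $|\n f|^2$ is available and no exact first integral exists.
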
 
\begin{proof}
We have $s>0$, $\a \le 0$, $s-n\lambda <0$ and these are all constants.
If $\a = 0$, then  $M$ is Einstein by (\ref{eqn2021-5-15-11}) and so is isometric to a sphere by the proof of Corollary~\ref{cor2021-5-15-7}.  So, we may assume   $\a <0$.  Recall that, by (\ref{eqn2}) and (\ref{eqn2021-5-15-11})
\be
\Delta f = \frac{s-n\lambda}{m} f\quad \mbox{and}\quad \a \Delta f = \frac{f}{m}|z|^2\label{eqn2021-4-20-8}
\ee
and so $|z|^2 = \a(s-n \lambda) $ is also a positive constant. 
It follows from the Bochner-Weitzenb\"ock formula that
$$
\frac{1}{2}\Delta |\n f|^2 = |Ddf|^2 + \langle \n \Delta f, \n f\rangle + z(\n f, \n f) + \frac{s}{n}|\n f|^2.
$$
From (\ref{eqn5}), we have
$$
z(\n f, \n f) = \frac{m}{f} Ddf(\n f, \n f) - \frac{m}{n} \frac{\Delta f}{f}|\n f|^2 = \frac{m}{2f}\n f(|\n f|^2) - \frac{s-n\lambda}{n}|\n f|^2.
$$
Thus,
\be
\frac{1}{2}\Delta |\n f|^2 -\frac{m}{2f}\n f(|\n f|^2)  = |Ddf|^2 
+ \left(\frac{s-n\lambda}{m} + \lambda\right)|\n f|^2.\label{eqn2021-4-20-10}
\ee
Note that (\ref{eqn2021-4-20-10}) is valid only on the set  $f>0$  or $f<0$.

\vspace{.12in}
\noindent
{\bf Assertion:} We claim $\dis{\frac{s-n\lambda}{m} + \lambda > 0.}$

\vspace{.1in}
\noindent
From Lemma~\ref{lem2021-5-4-1}, we have
$$
\frac{s-n\lambda}{m} = - \frac{s}{n-1} - \frac{n(m-1)}{m(n-1)}\a.
$$
So, by Lemma~\ref{eqn2021-5-31-1},
$$
\frac{s-n\lambda}{m} +\lambda = \lambda - \frac{s}{n-1} - \frac{n(m-1)}{m(n-1)}\a >0.
$$

\vspace{.1in}
\noindent
Now applying the  maximum principle to (\ref{eqn2021-4-20-10}) on the set $f>0$,  
the function $|\nabla f|^2$ cannot have its local maximum on the set $f>0$. 

Now, suppose that there is a critical point $p$ of $f$ with $0< f(p)=c<b=\max f$. 
Since $|\nabla f|$ is constant on each level set of $f$, $|\nabla f|=0$ on $f^{-1}(c)$. 
However, this implies that there should be a local maximum of $|\nabla f|^2$ in the set 
$\{x\in M\, \vert\, c <f(x) <b\}$, which is impossible by the above maximum principle.
Since $|\n f|^2$ cannot have its local maximum in the set $f<0$, the exactly same argument as above shows that $f$ cannot have its critical points  in the set $f<0$.
\end{proof}

Let $(M, g, f)$ be a  closed generalized $(\lambda, n+m)$-Einstein 
manifold with $f^{-1}(0) \ne \emptyset$.
 Assume that  $\o = 0$ and $s$ is  a (positive) constant.  Let ${\min_M f = a}$ and
${\max_M f = b}$ with $a<0<b$.
Lemma~\ref{lem2021-5-15-16}   shows that the sets  $f^{-1}(a)$ and $f^{-1}(b)$ are both connected, 
and either each set  is a single point or a hypersurface by the isotopy lemma. In fact, if $f^{-1}(a)$ has at least two components,  $f$ may have
 a critical point other than $f^{-1}(a)$. 
 Lemma~\ref{lem2021-5-15-16}   also shows that if $f^{-1}(0)$ is disconnected, 
then it has only two connected components.
Furthermore, if $f^{-1}(0)$ is connected, then both
 $f^{-1}(a)$ and $f^{-1}(b)$ consist of only a single point, and in this case each level hypersurface including $f^{-1}(0)$ is homotopically  an $(n-1)$-sphere ${\Bbb S}^{n-1}$.
 

The following result shows that  if  $f^{-1}(a)$ contains only a single point, then so does $f^{-1}(b)$, and vice versa. 
Moreover, in case of hypersurface, it should be  a totally geodesic stable minimal hypersurface.

\begin{lem}\label{lem2021-5-15-17}
Let $(M^n, g, f)$ be a  closed generalized $(\lambda, n+m)$-Einstein 
manifold with $f^{-1}(0)\ne \emptyset$.
 Assume that  $\o = 0$ and $s$ is  a (positive) constant.  Let ${\min_M f = a}$ and
${\max_M f = b}$ with $a<0<b$.
If  $f^{-1}(a)$  contains only a single point, then so does  $f^{-1}(b)$, and vice versa.
Furthermore, if $f^{-1}(a)$ is a single point, then 
 every level set $f^{-1}(t)$ except $t=a$ and $t=b$ is
a hypersurface and is homotopically a sphere ${\Bbb S}^{n-1}$.
\end{lem}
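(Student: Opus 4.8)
The plan is to combine a first‑integral identity for $|\nabla f|^{2}$, the Hessian form of the defining equation $Ddf=\frac{f}{m}z+\frac{\Delta f}{n}g$, and a local analysis of $f$ near $f^{-1}(a)$. Throughout I use what has already been established in this section: $s>0$, $\lambda>0$ and $\alpha:=z(N,N)$ are constants, $\Delta f=\frac{s-n\lambda}{m}f=:c_{0}f$ with $c_{0}<0$, and each of $f^{-1}(a)$, $f^{-1}(b)$ is connected and is either a single point or a hypersurface on which $\nabla f$ vanishes.

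First I would establish the first integral. Since $|\nabla f|$ is constant on every level set of $f$ and $N(|\nabla f|)=Ddf(N,N)=(\frac{\alpha}{m}+\frac{c_{0}}{n})f=:c_{1}f$, differentiating $|\nabla f|^{2}$ along a trajectory of $N=\nabla f/|\nabla f|$ gives $\frac{d}{df}|\nabla f|^{2}=2c_{1}f$, hence $|\nabla f|^{2}=c_{1}f^{2}+C$ along that trajectory. Running the trajectory down to $f^{-1}(a)$, where $|\nabla f|=0$, pins $C=-c_{1}a^{2}$, so $|\nabla f|^{2}=c_{1}(f^{2}-a^{2})$ on a neighborhood of $f^{-1}(a)$; positivity of $|\nabla f|^{2}$ on $\{a<f<b\}$ forces $c_{1}<0$. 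Next I would record the dichotomy that decides whether $f^{-1}(a)$, $f^{-1}(b)$ are points or hypersurfaces: if $f^{-1}(b)=\Sigma$ is a hypersurface, then $\nabla f=0$ and $f\equiv b$ on $\Sigma$, so $Ddf(X,X)=0$ for $X$ tangent to $\Sigma$; feeding this into $Ddf=\frac{b}{m}z+\frac{c_{0}b}{n}g$ gives $z(X,X)=-\frac{mc_{0}}{n}$, and since $z(\nu,\nu)=\alpha$ for the unit normal $\nu$ (a limit of $N$) and $z$ is traceless, $\alpha=\frac{(n-1)mc_{0}}{n}$; the same computation applies at $f^{-1}(a)$. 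Thus either of the two sets being a hypersurface forces $\alpha$ to equal the fixed number $\frac{(n-1)mc_{0}}{n}\neq0$.

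Now suppose $f^{-1}(a)=\{p\}$. Near $p$, $|\nabla f|^{2}=c_{1}(f^{2}-a^{2})\sim 2ac_{1}(f-a)$ with $2ac_{1}>0$, so $\sqrt{f-a}$ satisfies $|\nabla\sqrt{f-a}|^{2}=|\nabla f|^{2}/(4(f-a))\to ac_{1}/2>0$ as $x\to p$; this forces $p$ to be a non‑degenerate minimum, i.e. $Ddf(p)=\frac{a}{m}z(p)+\frac{c_{0}a}{n}g$ is positive definite. Choosing a common orthonormal eigenbasis $\{e_{j}\}$ of $Ddf(p)$ and $z(p)$ and approaching $p$ along the $e_{j}$‑axis yields $N\to\pm e_{j}$, whence $z(p)(e_{j},e_{j})=\lim z(N,N)=\alpha$ for every $j$, so $z(p)=\alpha\,g_{p}$ and, $z$ being traceless, $\alpha=0$. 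Corollary~\ref{cor2021-5-15-7} (applicable because $s$ is constant, so $\langle\nabla s,\nabla f\rangle\equiv0$) now gives that $M$ is Einstein, isometric to a round sphere $\mathbb{S}^{n}(r)$; there $z=0$, so $\mathring{Ddf}=0$ and $f$ is a first eigenfunction of $\Delta$, hence $f^{-1}(b)$ is a single point and every level set $f^{-1}(t)$, $a<t<b$, is a round $\mathbb{S}^{n-1}$ — and independently, $\alpha=0\neq\frac{(n-1)mc_{0}}{n}$ together with the dichotomy rules out $f^{-1}(b)$ being a hypersurface. For the reverse implication I would apply what has just been proved to the triple $(M,g,-f)$, which is again a closed generalized $(\lambda,n+m)$‑Einstein manifold satisfying all the hypotheses in force, with $\min(-f)=-b$ and $\max(-f)=-a$.

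The step I expect to be the main obstacle is the non‑degeneracy of the minimum at $p$: one must pass from the analytic first integral to genuine control of the local structure of $f$ at its critical point, and, relatedly, justify that $N$ realizes each eigendirection of $Ddf(p)$ as a limit along the corresponding axis. If the ``$\sqrt{f-a}$ behaves like a distance function'' argument needs reinforcing, a fallback is to exclude a degenerate isolated minimum by combining the Morse theory of $f$ (recall that $a$ and $b$ are its only critical values, by Lemma~\ref{lem2021-5-15-16}) with the PIC hypothesis.
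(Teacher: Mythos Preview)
Your route is quite different from the paper's. The paper argues by contradiction at the level of topology and a maximum principle: assuming $f^{-1}(a)$ is a hypersurface while $f^{-1}(b)$ is discrete, it invokes Lemma~\ref{lem2021-5-15-16} together with the Isotopy Lemma to force $M\setminus f^{-1}(a)$ to split into two connected components (so that $f^{-1}(b)$ must contain two points, one in each), and then, after disposing of the case $\alpha=0$ via Corollary~\ref{cor2021-5-15-7}, applies a maximum-principle argument to $\Delta f=\frac{f}{m\alpha}|z|^{2}$ on each component. You instead extract the exact first integral $|\nabla f|^{2}=c_{1}(f^{2}-a^{2})$ and set up a dichotomy for the constant $\alpha$: a hypersurface end forces $\alpha=\frac{(n-1)mc_{0}}{n}<0$, while a non-degenerate point end forces $\alpha=0$. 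This is an attractive strategy and, once completed, immediately identifies the Einstein/sphere case.

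The step you flag is, however, a genuine gap as written. Differentiating the first integral twice at $p$ yields only $(Ddf(p))^{2}=c_{1}a\,Ddf(p)$, so the eigenvalues of $Ddf(p)$ lie in $\{0,c_{1}a\}$; this does \emph{not} by itself exclude a nontrivial kernel $V_{0}$. Worse, your approach-along-eigenaxes argument fails precisely on $V_{0}$: differentiating the first integral a third time shows $f_{kjj}(p)=0$ unless $e_{k}\in V_{1}$, so along a geodesic in a $V_{0}$-direction the leading part of $\nabla f$ lands in $V_{1}$ and $N$ limits into $V_{1}$, not to $e_{j}$. Hence the conclusion $z(p)(e_{j},e_{j})=\alpha$ is unjustified for $e_{j}\in V_{0}$, and the chain to $\alpha=0$ breaks. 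The proposed fallback (Morse theory plus PIC) does not close this: PIC has already been spent in establishing $\omega=0$ and says nothing about the local jet of $f$ at $p$.

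The clean repair is to make your ``$\sqrt{f-a}$ behaves like a distance function'' remark rigorous via the transnormal structure. Since $|\nabla f|^{2}=b(f)$ with $b(a)=0$ and $b'(a)=2ac_{1}>0$, the function
\[
r(x)=\int_{a}^{f(x)} b(t)^{-1/2}\,dt
\]
satisfies $|\nabla r|\equiv 1$ on a punctured neighbourhood of $p$ and vanishes only at $p$; hence $r=d(p,\cdot)$ there and $f=\phi(d(p,\cdot))$ for some smooth $\phi$. From $(\phi')^{2}=b(\phi)$ one gets $\phi''(0)=\tfrac{1}{2}b'(a)=ac_{1}>0$, so $Ddf(p)=\phi''(0)\,g_{p}$ is positive definite. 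With this in hand your eigenbasis argument legitimately gives $z(p)=\alpha\,g_{p}$, forcing $\alpha=0$, and the rest of your proof goes through.
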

\begin{proof}
First of all, if $\a=0$, then $(M, g)$ is Einstein by Corollary~\ref{cor2025-2-19-1}. Moreover, since $\Delta f = - \frac{s}{n-1}f$
by Lemma~\ref{lem2021-5-4-1}  and (\ref{eqn2025-2-19-2}), $f$ is a height function and so satisfies our property. 
Thus, we may assume that $\a<0$. Suppose that $f^{-1}(a)$ is a hypersurface, but $f^{-1}(b)$ is not a hypersurface. It follows from
Lemma~\ref{lem2021-5-15-16} together with isotopy lemma that the set $f^{-1}(b)$ consists of
only two points, and the set $M-f^{-1}(a)$ has two connected components. Now, by (\ref{eqn2021-5-15-11}), 
\be
\Delta f = \frac{f}{m\a}|z|^2.\label{eqn202105-15-13}
\ee
Applying the maximum {principle} to  (\ref{eqn202105-15-13}) on each connected component of
$M - f^{-1}(a)$, $f$ must attain  its maximum on the boundary $f^{-1}(a)$, which is a contradiction.
\end{proof}

\begin{lem}\label{lem2021-3-3-2} 
Let $(M^n, g, f)$ be a  closed generalized $(\lambda, n+m)$-Einstein 
manifold with $f^{-1}(0)\ne \emptyset$.
 Assume that   $\o = 0$ and $s$ is  a (positive) constant. 
Suppose that $\Sigma:= f^{-1}(a)$ $[$$\Sigma = f^{-1}(b)$$]$  is a hypersurface for 
$a = \min_M f$ $[$$b = \max_M f$$]$  and assume $\nu$ is a unit normal vector field on $\Sigma$. Then we have the following.
\begin{itemize}
\item[(1)] $\a  = z(\nu, \nu)= -\frac{n-1}{n} \left(n\lambda - s\right)  < 0$ and
$Ddf_p(X, X)=0$ for a vector $X$ orthogonal to $\nu$ at any point $p \in \Sigma$.
\item[(2)] $\Sigma$ is totally geodesic.
\end{itemize}
\end{lem}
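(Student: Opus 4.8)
The plan is to first compute the number $\alpha=z(\nu,\nu)$ exactly on $\Sigma$, using that $\Sigma$ is a level hypersurface consisting entirely of critical points of $f$; then to bootstrap from this single value to umbilicity of the restriction of $z$ to \emph{every} level hypersurface, which kills the tangential part of $Ddf$ and makes each level hypersurface strictly between the extrema totally geodesic; and finally to pass to the limit to reach $\Sigma$ itself. Throughout I assume $\Sigma=f^{-1}(b)$ with $b=\max_M f$; the case $\Sigma=f^{-1}(a)$ is identical, the value of $f$ on $\Sigma$ cancelling in the computations below.

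\emph{Item (1).} Since $\Sigma$ is a hypersurface, every one of its points is a critical point of $f$, so $\nabla f\equiv 0$ on $\Sigma$; hence, for $X,Y$ tangent to $\Sigma$ at $p$, the covariant derivative $D_X\nabla f$ vanishes (it is the derivative along $\Sigma$ of the identically vanishing vector field $\nabla f|_\Sigma$), and therefore $Ddf_p(X,Y)=\langle D_X\nabla f,Y\rangle=0$, which gives the stated Hessian identity. Substituting $f|_\Sigma=b$, $\Delta f|_\Sigma=\frac bm(s-n\lambda)$ (from (\ref{eqn2})) and $Ddf|_{T\Sigma}=0$ into (\ref{eqn5}) yields $z(X,Y)=\frac1n(n\lambda-s)\langle X,Y\rangle$ for $X,Y\in T_p\Sigma$. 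Recalling that $i_{\nabla f}z=\alpha\,df$ off the critical set, with $\alpha$ a constant by Lemma~\ref{lem2021-5-15-6}, and letting interior points tend to $p$, we get $z(\nu,X)=0$ for $X\in T_p\Sigma$ and $z(\nu,\nu)=\alpha$; then the vanishing of $\operatorname{tr}z$ forces $\alpha=z(\nu,\nu)=-\frac{n-1}n(n\lambda-s)$, which is negative because $s<n\lambda$ on $M$ by Lemma~\ref{lem2021-5-4-1}.

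\emph{Item (2), main computation.} On each level hypersurface $f^{-1}(t)$, $a<t<b$, one has $z(N,N)=\alpha$ and $z(N,e_i)=0$ for $e_i\perp N$ (this is $\omega=0$); hence the restriction of $z$ to $Tf^{-1}(t)$ has trace $-\alpha$ and, since $|z|^2=\alpha(s-n\lambda)$ is a constant (proof of Lemma~\ref{lem2021-5-15-16}), squared norm $\alpha(s-n\lambda)-\alpha^2$. Inserting the value $\alpha=\frac{n-1}n(s-n\lambda)$ from item (1), a one-line computation shows $(\operatorname{tr}z|_{Tf^{-1}(t)})^2=(n-1)\,|z|_{Tf^{-1}(t)}|^2$ (both equal $\frac{(n-1)^2}{n^2}(s-n\lambda)^2$), i.e. equality in the inequality $(\operatorname{tr}A)^2\le(n-1)|A|^2$ for symmetric $(n-1)\times(n-1)$ matrices; therefore $z|_{Tf^{-1}(t)}=-\frac{\alpha}{n-1}g$. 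Using $Ddf=\frac fm z+\frac{\Delta f}n g$, $\Delta f=\frac fm(s-n\lambda)$ and $-\frac{\alpha}{n-1}=-\frac1n(s-n\lambda)$, the tangential part of $Ddf$ now cancels, $Ddf|_{Tf^{-1}(t)}=0$; since the second fundamental form of $f^{-1}(t)$ with respect to $N$ equals $\frac1{|\nabla f|}Ddf|_{Tf^{-1}(t)}$, every level hypersurface $f^{-1}(t)$ with $a<t<b$ is totally geodesic.

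\emph{Item (2), passing to $\Sigma$ — the main obstacle.} The identity $\mathrm{II}=\frac1{|\nabla f|}Ddf$ degenerates on $\Sigma$, so I would argue through a normal-geodesic/Fermi-coordinate picture using $D_NN=0$ (Lemma~\ref{lem2021-4-20-5}): the integral curves of $N=\nabla f/|\nabla f|$ are geodesics which, by the same argument that shows the level sets meet $N$ orthogonally, meet $\Sigma$ orthogonally, so near $\Sigma$ they are exactly the normal geodesics of $\Sigma$. Along such a geodesic $\gamma$ one has $\frac{d}{dr}f(\gamma)=|\nabla f|$ and $\frac{d}{dr}|\nabla f|=Ddf(N,N)=\frac{\alpha f}m+\frac{\Delta f}n=\frac{s-n\lambda}m f$ (using the value of $\alpha$), so $f(\gamma(r))$ solves $f''=\frac{s-n\lambda}m f$ with $f(0)=b$, $f'(0)=0$, whence $f(\gamma(r))=b\cos(kr)$ with $k^2=\frac{n\lambda-s}m>0$. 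Thus $f$ depends only on the Fermi coordinate $r$ near $\Sigma$, the level sets of $f$ there are precisely the parallel hypersurfaces $\{r=\text{const}\}$, and since all of those with $r\neq0$ are totally geodesic by the previous step, the induced metrics are independent of $r$; hence $g=dr^2+g_\Sigma$ near $\Sigma$ and $\Sigma$ is totally geodesic. (Alternatively one may simply invoke continuity of the second fundamental form as $t\to b$.) I expect that making this last step airtight — identifying the $N$-flow with the normal geodesics of $\Sigma$ and controlling $f$ along them up to $\Sigma$ — is the only real difficulty; items (1) and the umbilicity computation are essentially linear algebra once the generalized $(\lambda,n+m)$-Einstein identities are in hand.
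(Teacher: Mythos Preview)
Your proof is correct, and for item (1) it is actually more direct than the paper's. The paper decomposes the ambient Laplacian along $\Sigma$ to obtain $\Delta f=Ddf(\nu,\nu)$ and hence the value of $z(\nu,\nu)$, and then uses that $\Sigma$ is an extremum (so $Ddf_p(e_i,e_i)$ has a sign) together with the trace condition $\sum_{i\ge2}z_p(e_i,e_i)=-z_p(\nu,\nu)$ to force $Ddf_p(e_i,e_i)=0$. Your observation that $\nabla f|_\Sigma\equiv 0$ already kills $D_X\nabla f$ for $X\in T_p\Sigma$, and hence all of $Ddf(X,\cdot)$, bypasses both steps.

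For item (2) the approaches diverge completely. The paper argues locally on $\Sigma$: for a unit geodesic $\gamma$ with $\gamma'(0)$ tangent to $\Sigma$, the function $\varphi=f\circ\gamma$ satisfies a second-order ODE in $\varphi$ with $\varphi'(0)=\varphi''(0)=0$, and one concludes that $\gamma$ stays in $\Sigma$. You instead work \emph{globally on all level sets}: having pinned down $\alpha=-\frac{n-1}{n}(n\lambda-s)$ on $\Sigma$, and knowing $\alpha$ and $|z|^2=\alpha(s-n\lambda)$ are constants on $M$, you get equality in Cauchy--Schwarz for the tangential part of $z$ on every regular $f^{-1}(t)$, hence $z|_{Tf^{-1}(t)}=-\frac{\alpha}{n-1}g$ and $Ddf|_{Tf^{-1}(t)}=0$ everywhere. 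This is exactly the identity $|z|^2=\frac{n}{n-1}\alpha^2$, i.e.\ $T=0$, which the paper establishes only later (Theorem~\ref{thm001}); so your route in effect absorbs that theorem into the present lemma. What the paper's ODE argument buys is that it never leaves $\Sigma$; what your argument buys is the much stronger conclusion that the whole level-set foliation is totally geodesic.

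The one place to tighten is the passage to $\Sigma$. Your Fermi-coordinate sketch is fine, but the key input making it rigorous is the normal nondegeneracy $Ddf_p(\nu,\nu)=\Delta f(p)=\frac{b}{m}(s-n\lambda)\neq 0$ (which you have from item (1)); this makes $\Sigma$ a Morse--Bott critical manifold, forces $N\to\pm\nu$ along approaching integral curves, and guarantees the smooth convergence $f^{-1}(t)\to\Sigma$ needed for either your product-metric argument or the continuity-of-second-fundamental-form alternative. Stating this explicitly would remove the only soft spot.
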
 
\begin{proof}
(1)  For a sufficiently small $\epsilon >0$, $f^{-1}(a-\epsilon)$ has two connected components 
$\Sigma_{\epsilon}^+$ and $\Sigma_{\epsilon}^-$. 
Let $\nu$ be a unit normal vector field on $\Sigma=f^{-1}(a)$. On a tubular neighborhood of 
$\Sigma$, $\nu$ can be extended smoothly to a vector field $\tilde{\nu}$ such that 
$\tilde{\nu}\vert_{\Sigma}=\nu$ with 
$\tilde{\nu}\vert_{\Sigma_{\epsilon}^+} =N=\frac {\nabla f}{|\nabla f|}$ 
and $\tilde{\nu}\vert_{\Sigma_{\epsilon}^-}=-N=-\frac {\nabla f}{|\nabla f|}$.

The Laplacian of $f$ on $f^{-1}(a-\epsilon)=\Sigma_{\epsilon}^-\cup \Sigma_{\epsilon}^+$ is given by
\be 
\tr f =\tr ' f+Ddf(\tilde \nu, \tilde \nu)+ H\langle \tilde \nu, \nabla f\rangle, \label{eq41}
\ee
where $\tr '$  and $H$ denote the intrinsic Laplacian and the mean curvature of $f^{-1}(a-\epsilon)$, respectively. Since $\nabla f=0$ at $\Sigma$ and $\tr 'f=0$ in (\ref{eq41}), 
by letting $\epsilon \to 0$, we have
\bea 
\tr f = Ddf(\nu,\nu) \label{eq42}
\eea
on $\Sigma$. It is clear that $H$ is bounded around  $\Sigma$. Thus, 
\bea
-\frac {n\lambda -s}{m} a= \Delta f = Ddf(\nu, \nu) = \frac{a}{m} z(\nu, \nu)
-\frac {n\lambda -s}{mn} a, \label{eqn2021-3-8-1}
\eea
implying that 
\be 
z(\nu, \nu)= -\frac{n-1}{n} \left(n\lambda - s\right)  < 0.  \label{eq43}
\ee
Now let $p \in \Sigma$ be a point and choose an orthonormal basis $\{e_i\}_{i=1}^n$ at
 $p$ with $e_1=\nu$. Since $\Sigma$ is the maximum set of $f$, we have
\be
Ddf_p(e_i, e_i) = \frac{a}{m} z_p(e_i, e_i)-\frac {n\lambda -s}{mn} a \ge  0\label{eqn2025-2-19-3}
\ee
for each $i,    2\leq i\leq n$, and so 
$$ z_p(e_i, e_i) \ge \frac {n\lambda -s}{n} = -\frac 1{n-1}z_p(\nu, \nu).
$$
Since $\sum_{i=2}^{n}z_p(e_i, e_i)=-z_p(\nu, \nu)$, this  together with (\ref{eq43}) implies that 
\bea 
z_p(e_i, e_i) = \frac {n\lambda -s}{n} > 0\label{eq44}
\eea
and hence, by (\ref{eqn2025-2-19-3}),
\bea 
Ddf_p(e_i, e_i)=0\label{eq45}
\eea 
on $\Sigma$ for each $i,  2\leq i\leq n$. In case of maximum set $\Sigma = f^{-1}(b)$ with
$b = \max_M f$, the inequalities are just reversed above and we have the same conclusion.

\vspace{.08in}
\noindent
(2) 
Since $z(\nu, X)=0$ for $X$ orthogonal to $\nu$ at $p\in \Sigma$, we may take the previously 
mentioned orthonormal basis $\{e_i\}_{i=1}^n$ so that $\{e_i\}_{i=2}^{n}$ are tangent to 
$\Sigma$.

For $e_2$, let $\gamma : [0,l)\to M$ be a unit speed geodesic 
such that $\gamma(0)=p$, $\gamma'(0)=e_2$ for some $l>0$. 
Defining  $\varphi (t)=f\circ \gamma(t)$, we have  $\varphi '(0)=df_p (\gamma'(0))=0$,  and by (1) above
$$
\varphi ''(0)=Ddf(\gamma'(0), \gamma'(0))= Ddf_p(e_2, e_2)=0.
$$
Note that
$$ 
\varphi''(t) =Ddf(\gamma'(t), \gamma'(t))=   \left[z(\gamma'(t), \gamma'(t))
+\frac {(s-n\lambda)}{n}\right]\frac{\varphi(t)}{m}.
$$
So, it follows from the uniqueness of ODE solution that $\vp$ vanishes identically, which implies
that $\gamma(t)$ stays in $\Sigma$. Since $e_2$ is an arbitrary tangent vector perpendicular to $\nu$, we can conclude that
$\Sigma$ is totally geodesic.

\end{proof}

Let $(M^n, g, f)$ be a  closed generalized $(\lambda, n+m)$-Einstein manifold 
with $f^{-1}(0) \ne \emptyset$.
 Assume that   $\o = 0$ and $s$ is  a (positive) constant. 
 By Lemma~\ref{eqn2021-5-31-1}, we have
\be
{\rm Ric}(N, N) = \rho  =\frac{(n-1)\lambda-s}{m-1} \ge 0 \label{eqn2021-5-27-2}
\ee
and so
\be
\a = z(N, N) = {\rm Ric}(N, N) -\frac{s}{n} = \frac{(n-1)\lambda-s}{m-1}-\frac{s}{n}.\label{eqn2021-5-27-1}
\ee
Furthermore, by (\ref{eq43}), we have
$$
 \frac{(n-1)\lambda-s}{m-1}-\frac{s}{n} = \a =- \frac{n-1}{n}(n \lambda -s)
$$
So, we obtain
\bea
(n-1)\lambda - s= 0\label{eqn2021-5-6-2}
\eea 
and hence
$$
{\rm Ric}(N, N) = 0.
$$
 By continuity, this equality hold on the whole $M$ including  
the set $\Sigma = f^{-1}(a)$ for $a = \min_M f$ or  $\Sigma = f^{-1}(b)$ for $b = \max_M f$
if we assume  $\Sigma$ is a hypersurface. In particular, by (\ref{eqn2021-5-27-1}), 
\bea
z(N, N) =  - \frac{s}{n}.
\eea
Since $\Sigma$ is totally geodesic, we have
$$
s_{\Sigma} = s - 2{\rm Ric}(\nu, \nu) = s .
$$

\begin{lem}\label{lem2021-5-6-3} 
Let $(M^n, g, f)$ be a  closed generalized $(\lambda, n+m)$-Einstein 
manifold with $f^{-1}(0)\ne \emptyset$.
 Assume that   $\o = 0$ and $s$ is  a (positive) constant.  
Suppose that $\Sigma:= f^{-1}(a)$ $[\Sigma = f^{-1}(b)]$ is a hypersurface for $a = \min_M f$
$[b = \max_M f]$ and assume $\nu$ is a unit normal vector field on $\Sigma$. Then we have 
$$
\lambda = \frac{s}{n-1},\quad \a = - \frac{s}{n}\quad \mbox{and}\quad
{\rm Ric}(N, N) = 0.
$$
Moreover, $\Sigma$ is stable.
\end{lem}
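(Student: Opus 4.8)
The plan is to recognize that the three scalar identities have already been extracted in the discussion immediately preceding the statement, so the only genuinely new assertion to dispatch is the stability of $\Sigma$, which will follow from the totally geodesic property. First I would recombine Lemma~\ref{lem2021-3-3-2}(1), which on the hypersurface $\Sigma$ gives $\alpha = z(\nu,\nu) = -\frac{n-1}{n}(n\lambda-s)$, with the Ricci identity ${\rm Ric}(N,N) = \frac{(n-1)\lambda-s}{m-1}$ established above; since $\alpha = {\rm Ric}(N,N) - \frac{s}{n}$, equating the two expressions for $\alpha$ gives $\left(\frac{1}{m-1}+1\right)\big((n-1)\lambda-s\big) = 0$, and as $m>1$ the first factor is nonzero, whence $(n-1)\lambda = s$, i.e. $\lambda = \frac{s}{n-1}$. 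Feeding this back in yields ${\rm Ric}(N,N) = 0$ and, via (\ref{eq43}), $\alpha = z(\nu,\nu) = -\frac{n-1}{n}\lambda = -\frac{s}{n}$, so that ${\rm Ric}(\nu,\nu) = z(\nu,\nu) + \frac{s}{n} = 0$ along $\Sigma$. These are exactly the identities displayed just before the lemma, and the computation is the same whether $\Sigma = f^{-1}(a)$ or $\Sigma = f^{-1}(b)$.

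For the stability claim I would recall that, by Lemma~\ref{lem2021-3-3-2}(2), $\Sigma$ is a closed, two-sided, totally geodesic hypersurface; in particular it is minimal and its second fundamental form $A$ vanishes identically. The index form of the area functional along a normal variation with speed $\varphi \in C^\infty(\Sigma)$ is
\[
Q(\varphi,\varphi) = \int_{\Sigma}\Big(|\n^{\Sigma}\varphi|^2 - \big(|A|^2 + {\rm Ric}(\nu,\nu)\big)\varphi^2\Big),
\]
and with $|A|^2 \equiv 0$ and ${\rm Ric}(\nu,\nu) \equiv 0$ on $\Sigma$ this collapses to $Q(\varphi,\varphi) = \int_{\Sigma} |\n^{\Sigma}\varphi|^2 \ge 0$ for every $\varphi$, which is precisely the statement that $\Sigma$ is stable.

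I do not anticipate a real obstacle: every analytic ingredient --- constancy of $s$, $\lambda$, $\alpha$, the Ricci identity ${\rm Ric}(N,N) = \rho$, and the totally geodesic property of $\Sigma$ --- is already in hand from the earlier lemmas, and the stability statement then reduces to the elementary observation that a closed totally geodesic hypersurface with vanishing normal Ricci curvature is automatically stable. The only point meriting a line of care is the symmetry between the minimum case $\Sigma = f^{-1}(a)$ and the maximum case $\Sigma = f^{-1}(b)$, i.e. that the sign conventions in Lemma~\ref{lem2021-3-3-2} deliver the same conclusion at both ends; but that lemma already records both cases (the relevant inequalities merely reverse), so no additional work is required.
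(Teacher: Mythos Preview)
Your proposal is correct and follows the paper's approach essentially verbatim: the three scalar identities are recovered by combining $\alpha=-\tfrac{n-1}{n}(n\lambda-s)$ from Lemma~\ref{lem2021-3-3-2}(1) with ${\rm Ric}(N,N)=\tfrac{(n-1)\lambda-s}{m-1}$ from Lemma~\ref{eqn2021-5-31-1} exactly as in the discussion preceding the lemma, and the stability argument is identical to the paper's, namely that $|A|^2\equiv 0$ and ${\rm Ric}(\nu,\nu)\equiv 0$ reduce the index form to $\int_\Sigma|\nabla^\Sigma\varphi|^2\ge 0$.
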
 
\begin{proof}
The stability operator becomes
$$
\int_{\Sigma} \left\{|\n \varphi|^2 - (|A|^2+{\rm Ric}(\nu, \nu))\vp^2\right\}
=\int_{\Sigma} |\n \varphi|^2 \ge 0
$$
for any function $\varphi$ on $\Sigma$. Here $A$ denotes the second fundamental form of $\Sigma$.
\end{proof}

\begin{thm}\label{thm001} 
Let $(M^n, g, f)$ be a  closed generalized $(\lambda, n+m)$-Einstein 
manifold with $f^{-1}(0)\ne \emptyset$.
 Assume that   $\o = 0$ and $s$ is  a (positive) constant. If $f^{-1}(a)$ is a  hypersurface for
$a = \max_M f$ or $a = \min_M f$, then $(M, g)$ has harmonic Weyl curvature and is
Bach-flat.
\end{thm}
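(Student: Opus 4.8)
The goal is to reduce this to the earlier structural result, Theorem~\ref{lem23}, which asserts that vanishing of the tensor $T$ already forces harmonic Weyl curvature and Bach-flatness. So the entire plan is: under the hypotheses ($\o = 0$, $s$ constant, and $f^{-1}(a)$ a hypersurface for an extremal value $a$), show that $T \equiv 0$ on $M$. Recall from \eqref{eqn6} that $T = \frac{1}{n-2} df\wedge z + \frac{1}{(n-1)(n-2)} i_{\n f}z\wedge g$, and from \eqref{eq1001}--\eqref{eqn2021-3-17-1} that $\tilde i_{\n f}T = \frac{1}{n-1}\o$, which is already $0$ by assumption. The point of this theorem is that the \emph{remaining} components of $T$ — those not contracted with $\n f$ — also vanish, which does not follow from $\o = 0$ alone in general but should follow here because $\o = 0$ has forced $i_{\n f}z = \a df$ with $\a$ a constant and, crucially, the extremal analysis (Lemmas~\ref{lem2021-3-3-2}, \ref{lem2021-5-6-3}) has pinned down $\lambda = \frac{s}{n-1}$, $\a = -\frac{s}{n}$, and ${\rm Ric}(N,N) = 0$ everywhere on $M$.

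The key computational step I would carry out is to evaluate $T(e_i, e_j, e_k)$ in a local orthonormal frame adapted to the level sets, $e_1 = N = \n f/|\n f|$ with $e_2,\dots,e_n$ tangent to $f^{-1}(t)$. Since $\o = 0$ gives $z(N, e_j) = 0$ for $j \ge 2$ and $z(N,N) = \a$, the traceless Ricci tensor restricted to a level hypersurface is $z(e_i, e_j)$ for $i,j \ge 2$, with $\sum_{i\ge 2} z(e_i,e_i) = -\a$. Now $df\wedge z$ vanishes unless exactly one index is $1$; its nonzero components are $\pm|\n f|\, z(e_j, e_k)$ with $j, k \ge 2$. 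Similarly $i_{\n f}z\wedge g = \a\, df\wedge g$ (using $i_{\n f}z = \a df$), whose nonzero components are $\pm\a|\n f|\,\delta_{jk}$ again with one index equal to $1$. Putting these together, for $2 \le j,k \le n$,
\[
(n-2)\,T(e_1, e_j, e_k) = |\n f|\left(z(e_j,e_k) + \frac{\a}{n-1}\,\delta_{jk}\right).
\]
So $T = 0$ on $M\setminus{\rm Crit}(f)$ is equivalent to the pointwise pinching $z|_{TH} = -\frac{\a}{n-1}\, g|_{TH}$ on each level hypersurface $H$, i.e.\ $z$ restricted to the level sets is umbilical (a constant multiple of the induced metric). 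The task is therefore to prove this umbilicity identity.

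To establish the pinching I would run the Bochner argument for $|\n f|^2$ exactly as in Lemma~\ref{lem2021-5-15-16}, but now extract the \emph{equality} case rather than just the sign. From \eqref{eqn2021-4-20-10}, $\frac12\Delta|\n f|^2 - \frac{m}{2f}\n f(|\n f|^2) = |Ddf|^2 + \big(\frac{s-n\lambda}{m} + \lambda\big)|\n f|^2$, and we must relate $|Ddf|^2$ to the level-set geometry: writing $Ddf = \frac{f}{m}z + \frac{\Delta f}{n}g$ from \eqref{eqn5}, and using $z(N,N) = \a$, $z(N,e_j) = 0$, one gets $|Ddf|^2 = \big(\frac{f}{m}\a + \frac{\Delta f}{n}\big)^2 + \sum_{i,j\ge 2}\big(\frac{f}{m}z_{ij} + \frac{\Delta f}{n}\delta_{ij}\big)^2$. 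The extremal data $\lambda = \frac{s}{n-1}$, $\a = -\frac{s}{n}$ make the coefficient $\frac{s-n\lambda}{m}+\lambda$ an explicit positive constant, and I expect the integrated identity over $\{f>0\}$ (with the boundary term at $f^{-1}(b)$ controlled by Lemma~\ref{lem2021-3-3-2}(1), where $Ddf_p(X,X) = 0$ is already known at the extremal hypersurface) to force the quadratic $\sum_{i,j\ge 2}(\frac{f}{m}z_{ij} + \frac{\Delta f}{n}\delta_{ij})^2$ to vanish identically — which is precisely $z_{ij} = -\frac{\Delta f/n}{f/m}\delta_{ij} = \frac{n\lambda-s}{n}\delta_{ij} = -\frac{\a}{n-1}\delta_{ij}$, the desired umbilicity. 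The main obstacle will be making this equality-case argument airtight: I need to verify the boundary contributions at $f^{-1}(a)$ and $f^{-1}(b)$ really do vanish (using totally geodesic-ness and $Ddf|_\Sigma = 0$ from Lemmas~\ref{lem2021-3-3-2} and \ref{lem2021-5-6-3}), handle the locus ${\rm Crit}(f) = f^{-1}(a)\cup f^{-1}(b)$ where the frame degenerates (extending $T = 0$ there by continuity since $T$ is a globally defined smooth tensor while $\a$ only extends as $C^0$), and confirm no sign obstruction arises when $f < 0$. Once $T = 0$ is in hand on all of $M$, Theorem~\ref{lem23} immediately yields harmonic Weyl curvature and Bach-flatness. $\hfill\rule{.5em}{1em}$
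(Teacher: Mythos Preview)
Your overall strategy is right: reduce to $T\equiv 0$ and invoke Theorem~\ref{lem23}. Your componentwise computation is also correct --- in the adapted frame, $T=0$ on $M\setminus{\rm Crit}(f)$ is exactly the umbilicity $z_{ij} = -\frac{\a}{n-1}\delta_{ij}$ for $i,j\ge 2$. The gap is in how you propose to establish that umbilicity.

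You plan to extract it as the equality case of an integrated Bochner identity on $\{f>0\}$, worrying about boundary terms at the extremal hypersurfaces and the singular factor $m/(2f)$. This is speculative (you write ``I expect the integrated identity\ldots\ to force'') and unnecessarily involved. What you are overlooking is that $|z|^2$ is \emph{already} known to be a constant: from \eqref{eqn2021-4-20-8} (proved in Lemma~\ref{lem2021-5-15-16}), $\a\Delta f = \frac{f}{m}|z|^2$ together with $\Delta f = \frac{s-n\lambda}{m}f$ gives $|z|^2 = \a(s-n\lambda)$ pointwise. Combined with Lemma~\ref{lem2021-5-6-3}'s values $\lambda = \frac{s}{n-1}$ and $\a = -\frac{s}{n}$, this is pure algebra: $(n-1)\lambda + n\a = s - s = 0$, so
\[
|z|^2 - \frac{n}{n-1}\a^2 \;=\; \a(s-n\lambda) - \frac{n}{n-1}\a^2 \;=\; -\frac{\a}{n-1}\bigl[(n-1)\lambda + n\a\bigr] \;=\; 0.
\]
Since $|z|^2 = \a^2 + \sum_{i,j\ge 2} z_{ij}^2$ and Cauchy--Schwarz gives $\sum_{i,j\ge 2} z_{ij}^2 \ge \frac{1}{n-1}\bigl(\sum_{i\ge 2} z_{ii}\bigr)^2 = \frac{\a^2}{n-1}$ with equality iff $z_{ij} = -\frac{\a}{n-1}\delta_{ij}$, the identity $|z|^2 = \frac{n}{n-1}\a^2$ forces the umbilicity immediately at every regular point. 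Equivalently, the paper uses the norm formula $|T|^2 = \frac{2}{(n-2)^2}|\n f|^2\bigl(|z|^2 - \frac{n}{n-1}\a^2\bigr)$ and reads off $T=0$ directly. No integration, no boundary analysis, no Bochner equality case --- just two lines of arithmetic with constants already in hand.
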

\begin{proof}
Note that
$$
|T|^2 = \frac{2} {(n-2)^2}{|\n f|^2}\left(|z|^2 - \frac{n}{n-1}\a^2\right).
$$
By (\ref{eqn2021-4-20-8}), we have
$$
|z|^2 = \a(s - n\lambda) = - \a \lambda.
$$
So,
\bea
|z|^2 - \frac{n}{n-1}\a^2 &=& - \frac{\a}{n-1}\left[(n-1) \lambda + n \a\right]\\
&=&
 - \frac{\a}{n-1}\left[(n-1) \lambda -s\right] =0,
\eea
which means $T=0$. The conclusion follows from Theorem~\ref{lem23}.
\end{proof}

Let $(M^n, g, f)$ be a  closed generalized $(\lambda, n+m)$-Einstein 
manifold with $f^{-1}(0) \ne \emptyset$.
 Assume that   $\o = 0$ and $s$ is  a (positive) constant. If $f^{-1}(a)$ is a hypersurface for
$a = \max_M f$ or $a = \min_M f$ so that $f^{-1}(0)$ has two connected components 
which are both totally geodesics, then $T=0$ and so we have
$$
z_{ij} = - \frac{\a}{n-1}\d_{ij} = \frac{s}{n(n-1)}\d_{ij} \quad\mbox{for $2 \le i, j \le n$}
$$
and
$$
z_{1k} = \a \d_{1k} = - \frac{s}{n}\d_{1k}\quad \mbox{for $1\le k \le n$}.
$$
That is,
\bea
{\rm Ric}_{ij} =  \frac{s}{n-1}\d_{ij} \quad\mbox{for $2 \le i, j \le n$}
\eea
and
\bea
{\rm Ric}_{1k} = 0.
\eea
In particular, on each level hypersurface $f^{-1}(t)$, we have
$$
\frac{s}{n(n-1)} g - z = 0
$$
and from $\frac{s}{n(n-1)} g(N, N) - z(N, N) = \frac{s}{n-1}$, we obtain
\be
\frac{s}{n(n-1)} g =  z + \frac{s}{n-1}\frac{df}{|df|}\otimes \frac{df}{|df|}.\label{eqn2021-5-1-1}
\ee
On the other hand, for the curvature tensor $\mathcal R$ with $N = \n f /|\n f|$, $R_N$ is defined  as follows 
$$
R_N(X, Y) = {\mathcal R}(X, N, Y, N)
$$
for any vector fields $X$ and $Y$.
From $C=0$ and Lemma~\ref{lem202-3-3-4}, we have
$$
i_{\n f}{\mathcal W} = 0.
$$
So, by the curvature decomposition (\ref{eqn2025-2-19-5}),  we can obtain $ R_N(X, Y) = 0$ for any vector fields $X$ and $Y$ orthogonal to $\n f$.
Since $\Sigma=f^{-1}(a)$ with $a = \max_M f$ or $a = \min_M f$ is totally geodesic by Lemma~\ref{lem2021-3-3-2}, it follows from the Gauss equation that
$$
{\rm Ric}^\Sigma(X, X) = \lambda - R(X, \nu, X, \nu) = \lambda,
$$
which shows that $\Sigma$ is an Einstein manifold with positive Ricci curvature. 
From the same reason, any connected component of
the set  $f^{-1}(0)$ is also Einstein by Lemma~\ref{lem202-3-3-4}.
In fact, since $(M, g)$ has harmonic Weyl curvature and $i_{\n f}{\mathcal W} = 0$, from a result in \cite{cat}
$g$ is locally a warped product of an interval with an Einstein manifold around any regular point of $f$.
In our case, from (\ref{eqn2021-5-1-1}), we can see that $g$ is, in fact, a product of a circle
${\Bbb S}^1$ with an Einstein manifold $\Sigma_0$ which is totally geodesic and Einstein 
with positive Ricci curvature.  Therefore, we have the following.

\begin{thm}
Let $m>1$ and let $(M^n, g, f)$ be a closed generalized $(\lambda, n+m)$-Einstein manifold of constant scalar curvature with $f^{-1}(0)\ne \emptyset$. If $\o=0$  and  $f^{-1}(0)$ is disconnected, then $f^{-1}(0)$ has only two connected components, and, up to finite cover and rescaling, $M$ is isometric to the product ${\Bbb S}^1 \times {\Sigma}^{n-1}$ of a circle and an $(n-1)$-dimensional Einstein manifold $\Sigma$ of positive Ricci curvature. 
\end{thm}

In particular the fundamental group of $\Sigma_0$ is finite 
by Myers' theorem. So if we assume $(M, g)$ has PIC, then $\Sigma_0$ has also PIC and 
up to finite cover, it is homeomorphic to a standard sphere ${\Bbb S}^{n-1}$ by a result in
 \cite{mm88}. Hence if the set $f^{-1}(0)$ is disconnected, then $M$ is isometric to 
${\Bbb S}^1\times {\Bbb S}^{n-1}$, up to finite cover and rescaling.


\begin{thm}\label{str001} 
Let $(M^n, g, f), n \ge 4$, be a  closed generalized $(\lambda, n+m)$-Einstein manifold of constant scalar curvature with $f^{-1}(0)\ne \emptyset$ and $\o=0$.  Assume that $(M, g)$ has PIC and    $f^{-1}(0)$ is disconnected. Then $f^{-1}(0)$ has only two connected components and  $M$ is isometric to ${\Bbb S}^1 \times {\Bbb S}^{n-1}$,  up to finite cover and rescaling.
 \end{thm}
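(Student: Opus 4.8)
The plan is to combine the structural analysis already developed in Sections~4 and~5 with the classical consequences of harmonic Weyl curvature. We start from the hypotheses: $(M^n,g,f)$ is a closed generalized $(\lambda,n+m)$-Einstein manifold with $f^{-1}(0)\ne\emptyset$, $s$ constant, $(M,g)$ has PIC, and $f^{-1}(0)$ is disconnected. By Theorem~\ref{thm34}, the PIC hypothesis forces $\o=df\wedge i_{\n f}z=0$, so every result in Section~5 is available; in particular $s>0$ by Proposition~\ref{prop2021-4-19-18}, $\a$ is a nonpositive constant, $\lambda$ is a positive constant, $\n s$ is parallel to $\n f$ (hence $s$ is constant anyway), and $f$ has no critical points except at $\min_M f=a$ and $\max_M f=b$ by Lemma~\ref{lem2021-5-15-16}.

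The first real step is to show that disconnectedness of $f^{-1}(0)$ forces the extremal level sets $f^{-1}(a)$ and $f^{-1}(b)$ to be hypersurfaces rather than single points. Indeed, by Lemma~\ref{lem2021-5-15-16} and the Isotopy Lemma, $M\setminus f^{-1}(a)$ and $M\setminus f^{-1}(b)$ each have at most two components, and $f^{-1}(0)$ separates $M$ into exactly the regions $\{f>0\}$ and $\{f<0\}$; if $f^{-1}(a)$ were a single point then (by Lemma~\ref{lem2021-5-15-16}) every level set $f^{-1}(t)$ with $a<t<b$ would be connected, forcing $f^{-1}(0)$ to be connected, a contradiction. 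Hence $f^{-1}(a)$ is a hypersurface, and by Lemma~\ref{lem2021-5-15-17} so is $f^{-1}(b)$, and $f^{-1}(0)$ has exactly two components, each of which is totally geodesic by Lemma~\ref{lem202-5-15-3}.

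Next I invoke Lemma~\ref{lem2021-3-3-2} and the computation following it: in this hypersurface case we get $(n-1)\lambda=s$, hence $\Ric(N,N)=0$ and $z(N,N)=\a=-s/n$, the structural tensor $T$ vanishes by Theorem~\ref{thm001}, so $(M,g)$ has harmonic Weyl curvature and is Bach-flat, and moreover $i_{\n f}{\mathcal W}=0$ (via $C=0$ and Lemma~\ref{lem202-3-3-4}). Combining the curvature decomposition with $T=0$ gives $R_N(X,Y)=0$ for $X,Y\perp\n f$, i.e.\ all radial sectional curvatures vanish, and on each level hypersurface $\frac{s}{n(n-1)}g=z+\frac{s}{n-1}N^\flat\otimes N^\flat$. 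The decisive input is then the result of \cite{cat}: a metric with harmonic Weyl curvature and $i_{\n f}{\mathcal W}=0$ is locally a warped product $I\times_\psi \Sigma_0$ of an interval with an Einstein fiber around any regular point of $f$; using $R_N\equiv 0$ the warping function is constant, so $g$ is locally a \emph{Riemannian product} $\mathbb{R}\times\Sigma_0$ with $\Sigma_0$ totally geodesic and Einstein of positive Ricci curvature. Since $M$ is closed and $f$ ranges over $[a,b]$ with totally geodesic extremal slices, the global picture is $M=\mathbb{S}^1\times\Sigma_0$ up to finite cover and rescaling.

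Finally I close the argument: $\Sigma_0$ is closed, Einstein with positive Ricci curvature, so by Myers' theorem it has finite fundamental group, and it inherits PIC from $M$ (being totally geodesic with $R_N\equiv 0$, the isotropic curvature condition on $M$ restricts to $\Sigma_0$); hence by the Micallef--Moore theorem \cite{mm88} $\Sigma_0$ is homeomorphic to $\mathbb{S}^{n-1}$ up to finite cover, giving $M\cong\mathbb{S}^1\times\mathbb{S}^{n-1}$ up to finite cover and rescaling. I expect the main obstacle to be the passage from the \emph{local} warped-product splitting of \cite{cat} to a \emph{global} product decomposition: one must check that the Einstein fibers are mutually isometric across the whole range $(a,b)$, that the splitting extends across the totally geodesic hypersurfaces $f^{-1}(a)$ and $f^{-1}(b)$ (using that these are themselves totally geodesic Einstein hypersurfaces on which the same relation $\frac{s}{n(n-1)}g=z$ holds), and that the resulting line factor closes up to a circle rather than remaining an interval — i.e.\ that $a$ and $b$ are genuinely interior extrema of a smooth product function rather than boundary points of a manifold with boundary. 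The constancy of $|\n f|$ on level sets and the vanishing of $Ddf$ in directions tangent to the slices (Lemma~\ref{lem2021-3-3-2}(1)) are exactly what is needed to carry this out.
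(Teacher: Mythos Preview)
Your proposal is correct and follows essentially the same approach as the paper: the paper also deduces $\omega=0$ from PIC, uses the hypersurface nature of the extremal level sets to obtain $(n-1)\lambda=s$, $T=0$, harmonic Weyl curvature, $i_{\n f}{\mathcal W}=0$, and $R_N\equiv 0$, then appeals to \cite{cat} for the local warped-product structure, to (\ref{eqn2021-5-1-1}) for the global product, to Myers for finite $\pi_1$, and to \cite{mm88} for the spherical fiber. The local-to-global passage you flag as the main obstacle is handled in the paper only by the single sentence ``from (\ref{eqn2021-5-1-1}), we can see that $g$ is, in fact, a product of a circle ${\Bbb S}^1$ with an Einstein manifold $\Sigma_0$,'' so your concern is legitimate but does not distinguish your argument from the paper's.
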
 
\begin{proof}
First, assume $n \ge 5$ so that $\dim(\Sigma) \ge 4$. As mentioned above, $\Sigma$ is  homeomorphic to a sphere, up to finite cover.
 Moreover, it is proved in \cite{BS} that $\Sigma$ has  constant positive sectional curvature since it has PIC and Einstein of positive Ricci curvature. Therefore, $\Sigma$ must be isometric to a sphere. Now, assume the dimension of $M$ is $4$. Since  $\Sigma$ is an Einstein $3$-manifold of positive Ricci curvature, it has a positive constant sectional curvature too.
\end{proof}

\section{compact generalized Einstein manifold  with connected zero set}

Throughout this section, we assume that  $(M^n, g, f)$ is a  closed generalized 
$(\lambda, n+m)$-Einstein  manifold with constant scalar curvature $s$. We also assume that
$(M, g)$ has  $\o =df \wedge i_{\n f} z =0$ and the zero set $f^{-1}(0)$ is connected. Under these hypotheses, we show that $M$ is isometric to a sphere ${\Bbb S}^n$,  up to finite cover and rescaling.

Recall that $\a \le 0$, and if $\a = 0$, or equivalently ${\rm Ric}(N, N) >0$, it follows from Lemma~\ref{lem2021-5-15-16}  and Lemma~\ref{lem2021-5-15-17} that
the minimum set $f^{-1}(a)$ with $a = \min_Mf$ and the maximum set
$f^{-1}(b)$ with $b = \max_M f$ consist of a single point, respectively, and
$f^{-1}(0)$ must be connected. In particular, we have ${\rm Ric}(N,  N) > 0$. 
The following shows that the converse is also true.

 \begin{lem}\label{lem2021-5-27-3}
 Let $(M^n, g, f)$ be a  closed generalized $(\lambda, n+m)$-Einstein  manifold with constant scalar curvature $s$. If $\o = 0$ and $f^{-1}(0)$ is connected, then we have
 $$
 {\rm Ric}(N,  N) > 0
 $$
on the set $M$.
 \end{lem}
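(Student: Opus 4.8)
The plan is to argue by contradiction, exploiting that by Lemma~\ref{eqn2021-5-31-1} we already know ${\rm Ric}(N,N)=\frac{(n-1)\lambda-s}{m-1}\ge 0$ everywhere (recall $m>1$), so it remains only to rule out ${\rm Ric}(N,N)\equiv 0$. I would first dispose of the subcase $\alpha=0$: then $(M,g)$ is Einstein by Corollary~\ref{cor2021-5-15-7}, so ${\rm Ric}(N,N)=s/n>0$ since $s>0$ by Proposition~\ref{prop2021-4-19-18}, a contradiction. Hence we may assume $\alpha<0$, and, toward a contradiction, $(n-1)\lambda=s$; then $\lambda=\frac{s}{n-1}$ and $\alpha={\rm Ric}(N,N)-\frac{s}{n}=-\frac{s}{n}$ (both constant, by Lemmas~\ref{lem2021-5-15-6} and \ref{lem2021-5-4-1}), so that ``${\rm Ric}(N,N)=0$'' is a condition holding on all of $M$.

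Next I would show this forces $T\equiv 0$, exactly as in the proof of Theorem~\ref{thm001}: from $|z|^2=\alpha(s-n\lambda)=-\alpha\lambda$ (as in Lemma~\ref{lem2021-5-15-16}) and $n\alpha=-s$ one gets
\[
|z|^2-\frac{n}{n-1}\alpha^2=-\frac{\alpha}{n-1}\big[(n-1)\lambda+n\alpha\big]=-\frac{\alpha}{n-1}\big[s+n\alpha\big]=0,
\]
so $|T|^2=\frac{2}{(n-2)^2}|\nabla f|^2\big(|z|^2-\frac{n}{n-1}\alpha^2\big)=0$. By Theorem~\ref{lem23} the manifold then has harmonic Weyl curvature with $C=0$, and Lemma~\ref{lem202-3-3-4} with $T=0$ gives $f\,C=m\,\tilde i_{\nabla f}\mathcal W$, hence $\tilde i_{\nabla f}\mathcal W=0$, i.e.\ $i_{\nabla f}\mathcal W=0$.

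With harmonic Weyl curvature and $i_{\nabla f}\mathcal W=0$ in hand I would invoke the result of \cite{cat}, exactly as in the discussion preceding Theorem~\ref{str001}: on the regular set of $f$ the metric is a warped product $dt^2+\varphi(t)^2 g_{\Sigma_0}$ over an interval with $(\Sigma_0,g_{\Sigma_0})$ Einstein, and since $t$ may be taken to be arclength along the $\nabla f$-geodesics ($\partial_t=N$, using $D_NN=0$), this description is global on the connected regular set $M\setminus(f^{-1}(a)\cup f^{-1}(b))$, i.e.\ for $t$ in a bounded interval. Because $f^{-1}(0)$ is connected, the discussion following Lemma~\ref{lem2021-5-15-16} shows that $f^{-1}(a)$ and $f^{-1}(b)$ are single points, so the slices shrink at both ends and $\varphi\to 0$ at both endpoints while $\varphi>0$ in the interior. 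But for such a warped product ${\rm Ric}(N,N)={\rm Ric}(\partial_t,\partial_t)=-(n-1)\varphi''/\varphi$, so ${\rm Ric}(N,N)\equiv 0$ forces $\varphi''\equiv 0$; an affine function that is positive on a bounded open interval cannot vanish at both of its endpoints. This contradiction yields ${\rm Ric}(N,N)>0$ on $M$.

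The delicate point — the main obstacle — is upgrading the \emph{local} warped-product statement of \cite{cat} to one that is \emph{global} on the entire regular set, so that the affine conclusion for $\varphi$ applies simultaneously at both poles; here one uses that $t$ (arclength along the $\nabla f$-flow) and $\varphi$ are globally defined and that the regular set is connected. If one prefers to avoid \cite{cat}, the same contradiction follows more directly: $T=0$ together with (\ref{eqn5}) gives $Ddf(X,Y)=0$ for all $X,Y\perp\nabla f$, so every level set $f^{-1}(t)$, $a<t<b$, is totally geodesic; but $f$ restricted to a $\nabla f$-geodesic issuing from the pole $f^{-1}(a)$ depends only on arclength, since $|\nabla f|$ is constant on level sets, so these level sets are geodesic spheres about that pole — and a small geodesic sphere is never totally geodesic.
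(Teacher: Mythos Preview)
Your argument is correct and follows essentially the same route as the paper: assume ${\rm Ric}(N,N)=0$, deduce $T=0$ exactly as in Theorem~\ref{thm001}, and then contradict the fact that $f^{-1}(a)$ and $f^{-1}(b)$ are single points when $f^{-1}(0)$ is connected. The paper phrases the last step slightly differently---it invokes the discussion below Theorem~\ref{thm001} to say that (\ref{eqn2021-5-1-1}) forces $g$ to be a genuine product metric, which cannot close up at isolated points---whereas you make the warping function explicit ($\varphi''=0$, hence affine, hence cannot vanish at both endpoints); your alternative via totally geodesic level sets being small geodesic spheres is a pleasant, more elementary variant that sidesteps \cite{cat} entirely.
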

 \begin{proof}
By Lemma~\ref{lem2021-5-31-2}, we have $ {\rm Ric}(N,  N) \ge 0$.
 Suppose that ${\rm Ric}(N, N) = 0$ on $M$ so that $\a = - \frac{s}{n}$ and 
 $(n-1)\lambda - s = 0$ by (\ref{eqn2021-5-27-1}). Then as in the proof of Theorem~\ref{thm001} and the argument just below it, we have $T=0$ in this case, and so the metric
 $g$ has the same form as (\ref{eqn2021-5-1-1}):
$$
\frac{s}{n(n-1)} g =  z + \frac{s}{n-1}\frac{df}{|df|}\otimes \frac{df}{|df|}
$$ 
which shows that $g$ is, in fact, a product metric. However, since $f^{-1}(0)$ is connected, 
 both minimum set $f^{-1}(a)$ and maximum set $f^{-1}(b)$ consist of a single point, respectively, the metric $g$ cannot be a product, a contradiction.
 \end{proof}

 Let $(M^n, g, f)$ be a  closed generalized $(\lambda, n+m)$-Einstein 
manifold.
 Assume the scalar curvature $s$ is   constant. If $(M, g)$ has PIC  and $f^{-1}(0)$ is connected,  then, from Lemma~\ref{lem2021-5-15-17} and \cite{mm88}, $M$ is homeomorphic to a sphere.
 
Now, we introduce a warped product metric involving
$\frac {df}{|\nabla f|}\otimes \frac {df}{|\nabla f|}$ as a fiber metric on each level
set $f^{-1}(c)$. Consider a warped product metric $\bar{g}$ on $M$ by
\be
\bar{g}= \frac {df}{|\nabla f|}\otimes \frac {df}{|\nabla f|}+|\nabla f|^2 g_{\Gamma},
\label{eqn2019-8-27-1}
\ee
where $g_{\Gamma}$ is the restriction of $g$ to  $\Gamma:= f^{-1}(0)$. 
Note that, from Lemma~\ref{lem2021-5-15-16} , 
the metric $\bar g$ is smooth on $M$ except, possibly at two points, the maximum and minimum points of $f$.

The following lemma shows that $\nabla f$ is a conformal  vector field with respect to the metric $\bar{g}$.

\begin{lem}\label{lemt1} 
 Let $(M, g, f)$ be a  closed generalized $(\lambda, n+m)$-Einstein 
manifold with constant scalar curvature $s$ and $\omega=0$. Then,
$$\frac 12 {\mathcal L}_{\nabla f}\bar{g} =N(|\nabla f|)\bar{g}= \frac 1n 
(\bar{\tr }f)\, \bar{g}.
$$
Here, $\mathcal L$ denotes the Lie derivative.
\end{lem}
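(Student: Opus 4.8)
The plan is to use the flow of the unit vector field $N=\n f/|\n f|$ to recognize $\bar g$ as a genuine warped product, after which all the claimed identities become one-line computations. First I would recall that $D_NN=0$ by Lemma~\ref{lem2021-4-20-5}, so the integral curves of $N$ are unit-speed geodesics; since $s$ is constant and $f^{-1}(0)$ is connected, Lemma~\ref{lem2021-5-15-16} shows that $f$ has no critical points other than its (unique) maximum point $p_b$ and minimum point $p_a$. Hence, writing $\Phi_r$ for the time-$r$ flow of $N$ and $\Gamma:=f^{-1}(0)$, the map $\Phi(r,y)=\Phi_r(y)$ is a diffeomorphism from $(r_a,r_b)\times\Gamma$ onto $M\setminus\{p_a,p_b\}$. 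A standard computation using $D_NN=0$ and $\langle N,N\rangle\equiv 1$ shows $\Phi^*g=dr^2+g_r$, where $g_r$ is the induced metric on the level set; moreover, since $|\n f|$ is constant on each (connected) level set and $N(f)=|\n f|$, both $\tilde f:=f\circ\Phi$ and $v(r):=\Phi^*|\n f|$ depend only on $r$, with $\tilde f'(r)=v(r)$. Interpreting $g_\Gamma$ in the definition of $\bar g$ as the $r$-independent extension of $g|_\Gamma$ along $\Phi$, and using $\frac{df}{|\n f|}=dr$, we obtain
\[
\Phi^*\bar g=dr^2+v(r)^2\,g_\Gamma ,
\]
a warped product with warping function $v=|\n f|$.

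Next I would compute on $(r_a,r_b)\times\Gamma$, writing $\n f=v\,\partial_r$. Since $v$ depends only on $r$ and $g_\Gamma$ is $r$-independent and has no $dr$-component,
\[
\mathcal L_{v\partial_r}(dr^2)=2v'\,dr^2,\qquad \mathcal L_{v\partial_r}\big(v^2g_\Gamma\big)=v\,\partial_r(v^2)\,g_\Gamma=2v^2v'\,g_\Gamma ,
\]
so that $\mathcal L_{\n f}\bar g=2v'\big(dr^2+v^2g_\Gamma\big)=2v'\,\bar g$. Since $v'=\partial_r|\n f|=N(|\n f|)$, this is precisely $\tfrac12\mathcal L_{\n f}\bar g=N(|\n f|)\,\bar g$.

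For the remaining equality, I would note that $\bar g(\n f,\cdot)=v\,dr=d\tilde f=df$ — because $df=\tilde f'(r)\,dr=v\,dr$ and $dr$ is $\bar g$-dual to $\partial_r$ — so $\n f$ is also the $\bar g$-gradient of $f$; therefore
\[
\bar{\tr}\,f={\rm div}_{\bar g}(\n f)=\tfrac12\,{\rm tr}_{\bar g}\big(\mathcal L_{\n f}\bar g\big)=\tfrac12\,{\rm tr}_{\bar g}\big(2N(|\n f|)\bar g\big)=n\,N(|\n f|),
\]
which yields $\tfrac1n\bar{\tr}\,f=N(|\n f|)$. The identity holds on $M\setminus\{p_a,p_b\}$, the locus where $\bar g$ is smooth. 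The main obstacle is entirely in the first paragraph: one must be confident that the flow identification is legitimate — which is where the hypotheses that $s$ is constant and $f^{-1}(0)$ is connected enter, through Lemma~\ref{lem2021-5-15-16} — and that the definition of $\bar g$ genuinely produces the warped product $dr^2+v^2g_\Gamma$; once that structure is established, the Lie-derivative and Laplacian computations are immediate.
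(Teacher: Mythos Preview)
Your proof is correct and takes a genuinely different route from the paper's. The paper computes each piece of $\bar g$ intrinsically: it uses the defining equation $Ddf=\frac{f}{m}({\rm Ric}-\lambda g)$ together with the fact (implicit from PIC, via $\omega=0$) that ${\rm Ric}(\nabla f,\cdot)={\rm Ric}(N,N)\,df$ to evaluate $\frac12\mathcal L_{\nabla f}(df\otimes df)$, and then treats $|\nabla f|^2 g_\Gamma$ by noting $\mathcal L_{\nabla f}g_\Gamma=0$. You instead invoke $D_NN=0$ and Lemma~\ref{lem2021-5-15-16} to pass to flow coordinates, identify $\bar g$ as the honest warped product $dr^2+v(r)^2 g_\Gamma$, and compute $\mathcal L_{v\partial_r}$ termwise. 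Your approach front-loads the geometric content---the diffeomorphism $\Phi:(r_a,r_b)\times\Gamma\to M\setminus\{p_a,p_b\}$ and the interpretation of $g_\Gamma$ as the $r$-independent extension---but then the Lie-derivative and Laplacian identities become automatic one-variable calculus; it also makes the warped-product structure explicit, which is exactly what the next lemma needs for Tashiro's theorem. The paper's computation is shorter and stays coordinate-free, but relies on the structural equation and leaves the warped-product picture implicit. Both arguments ultimately rest on the same two facts: $|\nabla f|$ is constant on level sets, and $g_\Gamma$ is invariant under the flow of $\nabla f$.
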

\begin{proof}
Note that, by (\ref{eqn1}) we have,
$$ 
\frac 12 {\mathcal L}_{\nabla f} g  =D_gdf = \frac{f}{m}({\rm Ric}-\lambda g).
$$
 By the definition of Lie derivative, 
\bea
\frac 12{\mathcal L}_{\nabla f}(df \otimes df)(X,Y)&=& Ddf(X, \nabla f)df(Y)+df(X)Ddf(Y, \nabla f)\\
&=&   \frac{2f}{m}({\rm Ric}(N, N)-\lambda) \, df\otimes df(X, Y).
\eea
Therefore, 
\be 
\frac 12 {\mathcal L}_{\nabla f}\left( \frac {df}{|\nabla f|} \otimes \frac {df}{|\nabla f|} \right) 
= N(|\nabla f|)  \frac {df}{|\nabla f|} \otimes \frac {df}{|\nabla f|}.\label{eqnt2}
\ee
Since
$$ 
\frac 12 {\mathcal L}_{\nabla f} (|\nabla f|^2 g_{\Sigma})
= \frac 12 \nabla f(|\nabla f|^2) g_\Sigma
= Ddf(\nabla f, \nabla f)g_\Sigma= N(|\nabla f|) |\nabla f|^2 g_\Sigma,
$$
we conclude that
$$ 
\frac 12 {\mathcal L}_{\nabla f}\bar{g}=\bar{D}df= N(|\nabla f|) \bar{g}.
$$
In particular, we have $\bar{\tr} f =n N(|\nabla f|) $.
\end{proof}

\begin{lem} \label{lem2019-6-22-1}
 Let $(M^n, g, f)$ be a  closed generalized $(\lambda, n+m)$-Einstein 
manifold with constant scalar curvature $s$ and $\omega=0$.    If $f^{-1}(0)$ is connected, then $T=0$ on $M$.
\end{lem}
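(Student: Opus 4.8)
The plan is to show $\a:=z(N,N)=0$; since $\a=0$ forces $z\equiv 0$ by Corollary~\ref{cor2021-5-15-7}, this yields $T=0$ at once. So I assume $\a<0$ and aim for a contradiction. Because $(M,g)$ has PIC, Theorem~\ref{thm34} gives $\o=0$, hence $i_{\n f}z=\a\,df$ with $\a$ constant, $\lambda$ a positive constant, and $s<n\lambda$. As in the proof of Theorem~\ref{thm001} (which uses only $\o=0$ and constancy of $s$), in a local orthonormal frame with $e_1=N$ one has $|T|^2=\frac{2}{(n-2)^2}|\n f|^2\bigl(|z|^2-\frac{n}{n-1}\a^2\bigr)$ and $|z|^2=\a(s-n\lambda)$. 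Eliminating $\lambda$ by Lemma~\ref{lem2021-5-4-1} and using ${\rm Ric}(N,N)=\a+\frac sn$, a short computation gives $|z|^2-\frac{n}{n-1}\a^2=-\frac{mn}{n-1}\,\a\,{\rm Ric}(N,N)$, so that at every noncritical point of $f$
$$
|T|^2=-\frac{2mn}{(n-1)(n-2)^2}\,\a\,|\n f|^2\,{\rm Ric}(N,N).
$$
By Lemma~\ref{lem2021-5-27-3}, connectedness of $f^{-1}(0)$ forces ${\rm Ric}(N,N)>0$ on $M$; together with $\a<0$ this makes $|T|^2>0$ off the two critical points of $f$. In particular $|T|^2>0$ on the hypersurface $\Gamma:=f^{-1}(0)$, where $|\n f|\neq 0$; so it suffices to produce a single point of $\Gamma$ at which $T$ vanishes.

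For this I use the warped product metric $\bar g$ of \eqref{eqn2019-8-27-1} together with Lemma~\ref{lemt1}: $\n f$ is a conformal Killing field of $(M,\bar g)$ with $\bar D df=N(|\n f|)\,\bar g$. Since $N(|\n f|)=Ddf(N,N)=\frac{\a f}{m}+\frac{\Delta f}{n}=\frac fm\bigl({\rm Ric}(N,N)-\lambda\bigr)$ and ${\rm Ric}(N,N),\lambda$ are constants, this reads $\bar D df=c\,f\,\bar g$ for a constant $c$, which is negative (if $c=0$ then $|\n f|$ is constant along every $N$-flow line, impossible since $|\n f|$ vanishes at the two critical points; and as in the proof of Lemma~\ref{lem2021-5-15-16}, $|\n f|^2$ cannot have an interior local maximum in $\{f>0\}$, so $|\n f|$ must decrease toward the maximum of $f$, forcing $c<0$). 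Thus $\bar g$ satisfies the Obata equation $\bar D df=cf\bar g$, and an Obata/Tashiro-type rigidity argument forces $(M,\bar g)$ to be isometric to a round sphere. Since $\bar g=\frac{df}{|\n f|}\otimes\frac{df}{|\n f|}+|\n f|^2 g_\Gamma$ is a warped product over $(\Gamma,g_\Gamma)=(f^{-1}(0),g|_\Gamma)$, it follows that $(\Gamma,g_\Gamma)$ has constant sectional curvature; in particular $\Gamma$ is Einstein.

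It remains to see that $\Gamma$ Einstein forces $T|_\Gamma=0$. Recall $\Gamma$ is totally geodesic in $(M,g)$ by Lemma~\ref{lem202-5-15-3}, and $N$ generates a geodesic flow ($D_NN=0$). Parametrize the level sets near $\Gamma$ by $g$-arclength along $N$; the second fundamental form of the level set is $\frac1{|\n f|}Ddf=\frac1{|\n f|}\bigl(\frac fm z+\frac{\Delta f}{n}g\bigr)$, which vanishes on $\Gamma$ because $f=\Delta f=0$ there. Differentiating along $N$ and using $N(f)=|\n f|$, $N(\Delta f)=\frac{s-n\lambda}{m}|\n f|$, the Riccati equation gives $R_N|_{T\Gamma}=\pm\bigl(\frac1m z|_{T\Gamma}+\frac{s-n\lambda}{mn}g|_{T\Gamma}\bigr)$, where $R_N(X,Y)=R(X,N,Y,N)$. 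Substituting this into the Gauss equation ${\rm Ric}^M|_{T\Gamma}=R_N|_{T\Gamma}+{\rm Ric}^\Gamma$, and using ${\rm Ric}^M|_{T\Gamma}=z|_{T\Gamma}+\frac sn g|_{T\Gamma}$ and ${\rm Ric}^\Gamma=\frac{s_\Gamma}{n-1}g|_{T\Gamma}$, yields $\bigl(1\mp\frac1m\bigr)z|_{T\Gamma}=(\text{const})\,g|_{T\Gamma}$; since $m>1$, $z|_{T\Gamma}$ is a constant multiple of $g|_{T\Gamma}$, and taking the trace it equals $-\frac{\a}{n-1}g|_{T\Gamma}$. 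Together with $z(N,X)=0$ this is precisely $T|_\Gamma=0$, contradicting $|T|^2>0$ on $\Gamma$. Hence $\a=0$, so $z\equiv0$ and $T\equiv0$ on $M$.

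The one genuinely delicate point is the rigidity step: as already noted after \eqref{eqn2019-8-27-1}, $\bar g$ is only known to be smooth on $M$ minus the maximum and minimum points of $f$, so "$(M,\bar g)$ is a round sphere" cannot simply be quoted from Obata's theorem — one must either verify that $\bar g$ extends smoothly across these two points or carry the rigidity argument out on the incomplete manifold $M$ with those two points removed, whose two ends are cones over $(\Gamma,g_\Gamma)$. Everything else reduces to manipulations of the structure equations \eqref{eqn1}, \eqref{eqn5} and the facts established above.
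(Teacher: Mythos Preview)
Your argument is correct (modulo the smoothness caveat you yourself flag), and it shares with the paper the crucial step: introduce the auxiliary warped metric $\bar g$, use Lemma~\ref{lemt1} to see that $\nabla f$ is conformal Killing for $\bar g$ (in fact $\bar D df=cf\,\bar g$ with $c$ constant), and invoke a Tashiro/Obata-type rigidity to conclude that $(\Gamma,g|_\Gamma)$ has constant curvature. The paper handles the incompleteness of $\bar g$ at the two critical points by working on $\bar M=M\setminus\{p,q\}$ and citing Tashiro together with \cite[Theorem~1]{bgv}, which is essentially the alternative you describe.

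Where you diverge is in the endgame. The paper repeats the $\bar g$-argument for every level set $\Gamma_t=f^{-1}(t)$, concludes that the original metric $g$ is itself a warped product over $\Gamma$ (equation \eqref{eqn2019-12-23-1}), and then reads off ${\rm Ric}(e_j,e_j)=\frac{1}{n-1}[s-{\rm Ric}(N,N)]$ for $j\ge 2$ directly from the Lie-derivative identity; this gives $|z|^2=\frac{n}{n-1}\alpha^2$ and hence $T=0$ without any hypothesis on the sign of $\alpha$. You instead set up a contradiction: your identity $|z|^2-\frac{n}{n-1}\alpha^2=-\frac{mn}{n-1}\,\alpha\,{\rm Ric}(N,N)$ (a clean repackaging of the paper's formulas, valid under $\omega=0$ and constant $s$) combined with Lemma~\ref{lem2021-5-27-3} forces $|T|^2>0$ on $\Gamma$ when $\alpha<0$, while your Riccati/Gauss computation on the totally geodesic $\Gamma$ yields $z|_{T\Gamma}=-\frac{\alpha}{n-1}g|_{T\Gamma}$ and hence $T|_\Gamma=0$. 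Your route therefore proves the stronger statement $\alpha=0$ (so $z\equiv 0$) directly, effectively merging this lemma with the subsequent theorem; the paper's route is more modular, avoids the Riccati step, and keeps the conclusion $T=0$ separate from the final Einstein rigidity.
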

\begin{proof}
Let $p, q \in M$ be the only two points such that $f(p) = \min_M f$ and $f(q) = \max_M f$, respectively, and let $\bar M = M \setminus \{p, q\}$.
 Due to Lemma~\ref{lem2021-5-15-16}  together with our assumption that $f^{-1}(0)$ is connected and Lemma~\ref{lemt1}, we can apply Tashiro's result \cite{tas} and can see that
  $(\bar M, \bar g)$  is conformally equivalent
to ${\Bbb S}^n\setminus \{\bar p, \bar q\}$, where $\bar p$ and $\bar q$ are the points in ${\Bbb S}^n$  corresponding to $p$ and $q$, respectively.  
In particular, by Theorem 1 in \cite{bgv}, 
the fiber space $(\Gamma, g|_{\Gamma})$ is a space of constant curvature.
Thus, 
$$
(\Gamma, g|_\Gamma) \equiv ({\Bbb S}^{n-1}, \, r \cdot g_{{\Bbb S}^{n-1}}),
$$
where $r>0$ is a positive constant and $g_{{\Bbb S}^{n-1}}$ is a round metric.

Now, replacing $\Gamma = f^{-1}(0)$ by $\Gamma_t:= f^{-1}(t)$ in  (\ref{eqn2019-8-27-1}),
 it can be easily concluded that the warped product metric $\bar g_t$ also
satisfies Lemma~\ref{lemt1}, and hence, the same argument mentioned above 
shows that,
for any level hypersurface $\Gamma_t:= f^{-1}(t)$,
$$
(\Gamma_t, g|_{\Gamma_t}) \equiv ({\Bbb S}^{n-1}, r(t) \cdot  g_{{\Bbb S}^{n-1}}).
$$
Therefore, the original metric $g$ can also be written as
\be
g= \frac {df}{|\nabla f|}\otimes \frac {df}{|\nabla f|}+ b(f)^2 g_\Gamma,\label{eqn2019-12-23-1}
\ee
where $b (f)>0$ is a positive function depending only on $f$. 
From (\ref{eqnt2}) and the following identity
$$ 
\frac 12 {\mathcal L}_{\nabla f}(b^2 g_\Gamma)=b\langle \nabla f, \nabla b\rangle g_\Gamma = b|\nabla f|^2 \frac {db}{df}g_\Gamma,
$$
we obtain
\be 
\frac 12 {\mathcal L}_{\nabla f}g=N(|\nabla f|)\frac {df}{|\nabla f|}\otimes \frac {df}{|\nabla f|}+b|\nabla f|^2 \frac{db}{df}g_\Gamma.\label{eqnt5-1}
\ee
On the other hand, from (\ref{eqn1}) and (\ref{eqn2019-12-23-1}), we have
\bea
\frac 12 {\mathcal L}_{\nabla f}g &=& 
 \frac{f}{m}({\rm Ric}-\lambda g)\\
&=& N(|\nabla f|) \frac {df}{|\nabla f|}\otimes \frac {df}{|\nabla f|} + \frac{f}{m}{\rm Ric}
- \frac{f}{m}{\rm Ric}(N, N)  \frac {df}{|\nabla f|}\otimes \frac {df}{|\nabla f|}  
- \frac{\lambda f}{m} b^2 g_\Gamma.
\eea
Comparing this to (\ref{eqnt5-1}), we obtain
\be
\left( b|\nabla f|^2 \frac {db}{df} +\frac{\lambda f}{m} b^2\right) g_\Gamma
= \frac{f}{m} \left({\rm Ric} - {\rm Ric}(N, N)  \frac {df}{|\nabla f|}\otimes \frac {df}{|\nabla f|}\right).\label{eqnt7-1}
\ee
Now, let $\{e_1, e_2, \cdots, e_n\}$ be a local frame with $e_1=N$. 
Then, by applying $(e_j, e_j)$ for each $2\leq j\leq n$ to (\ref{eqnt7-1}), we have
$$ 
b|\nabla f|^2 \frac {db}{df}=  \frac{f}{m} {\rm Ric}(e_j, e_j)-\frac {\lambda f}{m}b^2
$$
for $2\leq j\leq n$. Summing up these, we obtain
$$ 
(n-1)b|\nabla f|^2 \frac {db}{df}= \frac{f}{m}\left[s-{\rm Ric}(N, N) \right]
-\frac {(n-1)\lambda f}{m} b^2.
$$
Substituting this into (\ref{eqnt7-1}), we get
\be
 \frac {1}{n-1} \left[s-{\rm Ric}(N, N)\right] g_\Gamma
  = \left({\rm Ric} - {\rm Ric}(N, N)  \frac {df}{|\nabla f|}\otimes \frac {df}{|\nabla f|}\right). \label{eqn2020-8-29-1}
 \ee
Replacing $(\Gamma, g_\Gamma)$ by $(\Gamma_t , g_{\Gamma_t})$, we can see that the argument mentioned above is also valid. 
Thus, (\ref{eqn2020-8-29-1}) shows that, on each level hypersurface $f^{-1}(t)$, we have
$$ 
{\rm Ric}(e_j, e_j)= \frac {1}{n-1} \left[s-{\rm Ric}(N, N)\right]
$$
for $2\leq j\leq n$, which is equivalent to 
$$
z(e_i, e_i) = - \frac{\a}{n-1}.
$$
Hence,
$$ 
|z|^2=\a^2+\frac {\a^2}{n-1}=\frac n{n-1}\a^2=\frac n{n-1}|i_Nz|^2,
$$
since $z(N, e_i)=0$ for $i\geq 2$. As a result, we have $T=0.$

\end{proof}

\begin{thm}\label{str002} 
Let $(M, g, f)$ be a  closed generalized $(\lambda, n+m)$-Einstein 
manifold.
 Assume the scalar curvature $s$ is  constant and $\omega=0$ If  $f^{-1}(0)$ is connected,  then  $M$ is isometric to a sphere ${\Bbb S}^{n}$.
\end{thm} 
\begin{proof}
By Lemma~\ref{lem2021-5-27-3} together with (\ref{eqn2021-5-27-2}) and (\ref{eqn2021-5-27-1}),  we have
\be
{\rm Ric}(N, N)   =\frac{(n-1)\lambda-s}{m-1}>0\quad \mbox{and}
\quad
\a = \frac{(n-1)\lambda-s}{m-1} - \frac{s}{n} \le 0.\label{eqn2021-5-29-1}
\ee
Suppose that $\a <0$. From Lemma~\ref{lem2019-6-22-1} and (\ref{eqn2021-4-20-8}), 
we  have
$$ 
\frac{n}{n-1}\a^2 = |z|^2 = \a(s-n\lambda)
$$
and so
\bea
\a = \frac{n-1}{n}s- (n-1)\lambda = [s-(n-1)\lambda]-\frac{s}{n}.\label{eqn2021-5-29-2}
\eea
Comparing this to (\ref{eqn2021-5-29-1}), we have $(n-1)\lambda - s =0$, which
 contradicts ${\rm Ric}(N, N)> 0$.
 Hence we have $\a = 0$ and consequently $(M, g)$ is Einstein, and  is isometric to a sphere.

\end{proof}

\section{Final Remarks}

Let $(M^n, g, f)$ be a  closed generalized $(\lambda, n+m)$-Einstein 
manifold with $f^{-1}(0) \ne \emptyset$. If $\o = 0$ and the scalar curvature $s$ is constant,
by Lemma~\ref{lem2021-5-4-1},   the function $\lambda$ is also a (positive) constant
(see also Corollary~\ref{cor2021-4-19-12} for $m=1$ without vanishing of $\o$).
In case of $m \ge 1$, we can show that the converse is also true. Namely, we have the following.

\begin{lem}\label{constantscalar}
Let $(M, g, f)$ be a  closed generalized $(\lambda, n+m)$-Einstein 
manifold with $f^{-1}(0) \ne \emptyset$. If $\o = 0$ and the function $\lambda$ is constant,
then the scalar curvature $s$ is also a (positive) constant.
\end{lem}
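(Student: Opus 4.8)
The plan is to push the structure equation of Lemma~\ref{lem2021-4-19-100} as far as it will go once $\lambda$ is constant. Under $\omega=0$ we have $i_{\nabla f}z=\alpha\,df$ with $\alpha=z(N,N)$, and by Lemma~\ref{lem2021-5-15-6} the function $\alpha$ is a constant. Substituting this together with $d\mu=\lambda\,df-\tfrac1{n-1}(s\,df+f\,ds)$ into (\ref{eqn2021-4-19-15}), and using that $\lambda$ is constant, all terms not involving $ds$ collect into a single multiple of $df$; one is left with the identity of $1$-forms
\[
G\,df=-\tfrac12\,f\,ds \quad\text{on }M,\qquad\text{where}\quad G:=(m-1)\alpha+\tfrac{m+n-1}{n}\,s-(n-1)\lambda .
\]
Since $\alpha,\lambda$ are constants, $dG=\tfrac{m+n-1}{n}\,ds$, so this also reads $2(m+n-1)\,G\,df+n\,f\,dG=0$.

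Next I would evaluate on the zero set. At any $p\in f^{-1}(0)$ one has $f(p)=0$ while $df_p\neq0$ by Lemma~\ref{prop2021-5-15-2}, so $G(p)=0$; as $\alpha$ and $\lambda$ are constant this forces $s\equiv s_0:=\tfrac{n}{m+n-1}\bigl[(n-1)\lambda-(m-1)\alpha\bigr]$ on $f^{-1}(0)$.

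The core of the argument is to propagate $G\equiv0$ to all of $M$. From $2(m+n-1)\,G\,df+n\,f\,dG=0$ one checks that the continuous function $|f|^{2(m+n-1)}G^{n}$ has vanishing differential on the dense open set $\{f\neq0\}$ (on $\{f>0\}$, $d\bigl(f^{2(m+n-1)}G^{n}\bigr)=f^{2(m+n-1)-1}G^{n-1}\bigl[2(m+n-1)G\,df+nf\,dG\bigr]=0$, and similarly on $\{f<0\}$); hence it is constant on each connected component of $\{f\neq0\}$. Every such component $U$ has nonempty frontier contained in $f^{-1}(0)$ — otherwise $U$ would be open and closed in the connected manifold $M$, contradicting $\min_M f<0<\max_M f$ — and along any sequence in $U$ tending to that frontier, $|f|^{2(m+n-1)}G^{n}\to0$ because $G$ is bounded on the compact $M$. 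So the constant is $0$ on every component, i.e. $|f|^{2(m+n-1)}G^{n}\equiv0$; since $f$ does not vanish on $\{f\neq0\}$ this gives $G\equiv0$ there, and then $G\equiv0$ on $M$ by continuity. Therefore $s\equiv s_0$, a constant. (Equivalently: $ds\parallel df$ is already known, so along the flow of $\nabla f$ the function $G$ depends on $f$ alone and solves $G'(f)=-\tfrac{2(m+n-1)}{n}G/f$, whose only solution staying bounded as $f\to0$ is $G\equiv0$.) I expect this propagation step — extracting $G\equiv0$ from the single relation $G\,df=-\tfrac12 f\,ds$ together with boundedness — to be the only genuine difficulty; everything before it is algebra.

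Finally I would check that the constant $s$ is positive. Now that $s$ is constant, $\langle\nabla s,\nabla f\rangle\equiv0$, so Corollary~\ref{cor2021-5-15-7} gives $\alpha\le0$, and Lemma~\ref{lem2021-5-4-1} gives that $\lambda$ is a positive constant with $\lambda=\tfrac{m+n-1}{n(n-1)}s+\tfrac{m-1}{n-1}\alpha$. Since $m\ge1$ and $\alpha\le0$, this yields $\tfrac{m+n-1}{n(n-1)}s=\lambda-\tfrac{m-1}{n-1}\alpha\ge\lambda>0$, hence $s>0$. (Alternatively, once $s$ is known to be constant one may simply invoke Corollary~\ref{cor2021-4-19-12} for $m=1$ and Proposition~\ref{prop2021-4-19-18} for integer $m>1$.)
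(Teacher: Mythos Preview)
Your argument is correct and follows essentially the same route as the paper. Both proofs feed $i_{\nabla f}z=\alpha\,df$ (with $\alpha$ constant) and $\lambda$ constant into the structure identity of Lemma~\ref{lem2021-4-19-100} to obtain the single relation $G\,df=-\tfrac12 f\,ds$, and then integrate along the $f$-direction using boundedness near $f^{-1}(0)$ to kill the integration constant; the paper writes this as $s=\tfrac{k_2}{k_1}+c_0 f^{-2k_1}$ on $\{f>0\}$ and lets $f\to 0$, while you package the same computation as constancy of $|f|^{2(m+n-1)}G^{\,n}$, which is a cosmetic difference. You also supply an explicit positivity argument for $s$, which the paper's proof leaves implicit in the statement.
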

\begin{proof}
Suppose that $\lambda$ is constant. We recall that, from Lemma~\ref{lem2021-5-15-6}, the function $\a$ is constant. Since $i_{\n f} z = \a df$, it follows from Lemma~\ref{lem2021-4-19-100} that
$$
ds + \frac{2(m+n-1)}{n} s  df = 2\left[(n-1)\lambda - (m-1)\a \right] df.
$$
Now let $k_1 = \frac{m+n-1}{n}$ and $k_2 = (n-1)\lambda - (m-1)\a$. 
Considering the set $M^0 = \{x\in M\,:\, f(x)>0\}$ and multiplying by $\frac{1}{f}$, we obtain, on the set $M^0$,
\be
\n s + 2k_1 s \n \ln f = 2k_2 \n \ln f.\label{eqn2021-8-4-1}
\ee
Defining $\varphi = 2k_1 \ln f$, we can rewrite (\ref{eqn2021-8-4-1}) as 
$$
\n s + s\n \vp =  \frac{k_2}{k_1} \n \vp,
$$
or equivalently
$$
\n (s e^\vp) = \frac{k_2}{k_1} \n e^\vp.
$$
Since $k_1$ and $k_2$ are constants, we conclude that
$$
s = \frac{k_2}{k_1} + c_0 e^{-\vp}=\frac{k_2}{k_1} + \frac{c_0}{f^{2k_1}}.
$$
Consequently, we obtain
$$
sf^{2k_1} = \frac{k_2}{k_1} f^{2k_1} + c_0
$$
on the set  $M^0 = \{x\in M\,:\, f(x)>0\}$. Taking a sequence $p_l \in M^0$ tending to a point
$p \in f^{-1}(0)$ as $l \to \infty$, we have $c_0 = 0$ and hence
\be
s = \frac{k_2}{k_1} = \frac{n\left[(n-1)\lambda - (m-1)\a\right]}{m+n-1}
\label{eqn2021-8-4-2}
\ee
which shows that $s$ is constant on the set $f>0$. The same argument works on the set
$M_0 := \{x\in M \,:\, f(x) <0\}$ and the proof is complete.
\end{proof}

Note that (\ref{eqn2021-8-4-2}) is exactly the same  as (\ref{eqn2021-5-4-2}) in
Lemma~\ref{lem2021-5-4-1}. Applying Theorem~\ref{str001}  and Theorem~\ref{str002},
we have the following result.

\begin{thm}\label{thm2021-8-4-3} 
Let $(M^n, g, f)$ be a  closed generalized $(\lambda, n+m)$-Einstein 
manifold.  Assume $(M, g)$ has PIC  and  the function $\lambda$ is  constant. Then 
\begin{itemize}
\item[(1)]  if $f^{-1}(0)$ is connected,  then  $M$ is isometric to a sphere ${\Bbb S}^{n}$.
\item[(2)] if $f^{-1}(0)$ is disconnected,  then, it has only two connected components and
 $M$ is isometric to ${\Bbb S}^1 \times {\Bbb S}^{n-1}$,
 up to finite cover and rescaling.
\end{itemize}
\end{thm}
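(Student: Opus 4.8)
The plan is to reduce Theorem~\ref{thm2021-8-4-3} to the classification already obtained under the hypothesis of constant scalar curvature, by first using the PIC assumption to make the $2$-form $\omega$ vanish and then using the preceding Lemma to upgrade the constancy of $\lambda$ to the constancy of $s$. Throughout, the standing convention $\min_M f<0<\max_M f$ guarantees $f^{-1}(0)\neq\emptyset$, so all the lemmas of the previous sections apply.

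I would carry this out in three steps. First, since $(M,g)$ has PIC, Theorem~\ref{thm34} shows that $\omega=df\wedge i_{\nabla f}z$ vanishes identically on $M$; in particular, by Lemma~\ref{lem2021-5-15-6}, $i_{\nabla f}z=\alpha\,df$ with $\alpha$ a constant. Second, feeding the two facts "$\omega=0$" and "$\lambda$ is constant" into the preceding (unlabelled) Lemma --- whose proof combines the Cotton-type identity of Lemma~\ref{lem2021-4-19-100} with an integrating-factor computation and is valid for all $m\ge 1$ --- we conclude that the scalar curvature $s$ is a positive constant. Third, with $s$ now constant and $(M,g)$ still having PIC, the stated dichotomy is precisely the content of the two structure theorems proved earlier: if $f^{-1}(0)$ is connected, Theorem~\ref{str001} gives that $M$ is isometric to ${\Bbb S}^n$; if $f^{-1}(0)$ is disconnected, Theorem~\ref{str001} gives that $f^{-1}(0)$ has exactly two connected components and $M$ is isometric to ${\Bbb S}^1\times{\Bbb S}^{n-1}$, up to finite cover and rescaling.

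I do not expect a substantive obstacle at this stage: all of the analytic work --- the vanishing of $\omega$ under PIC, the rigidity of the induced warped-product structure on the level sets, and the warped-product classification of Tashiro together with the sphere theorem of Micallef--Moore --- has already been absorbed into the earlier results, so the present argument is essentially an assembly. The one point that warrants care is checking the logical independence of the implication "$\lambda$ constant $\Rightarrow$ $s$ constant" from the structure theorems it is combined with: one must confirm that this implication uses only $\omega=0$ (equivalently, PIC) together with Lemma~\ref{lem2021-4-19-100} and the constancy of $\alpha$, and presupposes nothing about $s$. Since that is indeed how the preceding Lemma is proved, the chain of implications closes and the theorem follows.
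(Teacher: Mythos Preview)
Your proposal is correct and follows essentially the same route as the paper: the paper also deduces Theorem~\ref{thm2021-8-4-3} by invoking the preceding Lemma (PIC $\Rightarrow \omega=0$ via Theorem~\ref{thm34}, then $\lambda$ constant $\Rightarrow s$ constant) and then applying the two structure theorems labeled \texttt{str001}. Your remark about checking that the implication ``$\lambda$ constant $\Rightarrow s$ constant'' presupposes nothing about $s$ is well taken and matches the paper's organization.
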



\setlength{\baselineskip}{15pt}

\vspace{.12in}
\noindent
Seungsu Hwang\\
Department of Mathematics\\
Chung-Ang University\\ 
221 HeukSuk-dong, DongJak-ku, Seoul, Korea 156-756\\
{\tt E-mail:seungsu@cau.ac.kr}

\vspace{.12in}
\noindent
Marcio Santos\\
Departamento de Matemática\\
Universidade Federal da Paraíba\\
João Pessoa, PB, Brazil\\
{\tt E-mail:marcio.santos@academico.ufpb.br}

\vspace{.12in}
\noindent
Gabjin Yun\\
Department of Mathematics and the Natural Science of Research Institute\\
Myong Ji University\\
San 38-2, Nam-dong, Yongin-si, Gyeonggi-do, Korea, 17058\\
{\tt E-mail:gabjin@mju.ac.kr}


\begin{thebibliography}{99}








\bibitem{b-b-b-v} H. Baltazar, A. Barros, R. Batista and E. Viana, {\it On static manifolds and related critical spaces with zero radial Weyl curvature}, Monatshefte f\"ur  Mathematik, {\bf 191} (2020), 449—463.


\bibitem{b-g} A. Barros, J. N. Gomes,
 {\it A compact gradient generalized quasi-Einstein metric with constant scalar curvature},  J. Math. Anal. Appl.   {\bf 401} (2013), 702--705.


\bibitem{Be} A.L. Besse, Einstein Manifolds, New York: Springer-Verlag 1987.



\bibitem{BS} {S.  Brendle},  {\it Einstein manifolds with nonnegative isotropic curvature are locally symmetric}, Duke Math. J.,   {\bf 151} (1) (2010), 1–-21. 




\bibitem{bgv} M. Brozos-V\'{a}zquez, E. Garc\'{i}a-R\'{i}o and R. V\'{a}zquez-Lorenzo, 
{\it Some remarks on locally conformally flat static space-times}, J. Math. Phys. 
{\bf 46} (2), 022501 (2005), 11 pp.

\bibitem{cat}  G. Catino, {\it Generalized quasi-Einstein manifolds with harmonic Weyl tensor}, Math. Z. {\bf 271} (2012),  751--756. 






\bibitem{f-s} D. Fischer-Colbrie, R. Schoen, 
{\it The Structure of Complete Stable Minimal Surfaces in
$3$-Manifolds of Non-Negative Scalar Curvature}, 
Comm. Pure and App. Math.  {\bf 33} (1980), 199--211.

\bibitem{f-san} A. Freitas, M. Santos,
{\it Boundary topology and rigidity results for generalized $(\lambda, n+m)$-Einstein manifolds}, 
Annali di Matematical Pure ed Applicata,   {\bf 199} (2020), 2511--2520.

\bibitem{h-p-w} C. He, P. Petersen, W. Wylie,  {\it On the classification of warped product Einstein manifolds}, Commun, Anal. Geom.   {\bf 20} (2) (2012),   271--311.



\bibitem{h-y3} S. Hwang and G. Yun, {\it Erratum to : Besse conjecture with positive isotropic curvature}, Ann.  Glob. Anal. Geom., 11 (3) (2022),  507-532.

\bibitem{h-y5} S. Hwang, G. Yun, {\it Structures of  compact static vacuum   spaces and positive isotropic curvature}, arXiv:2103.15818.



\bibitem{kim2} {D. S. Kim,  Y. H. Kim}, {\it Compact Einstein warped product spaces with nonpositive scalar curvature}, Proc. Amer. Math. Soc., {\bf 131}(8)  (2003),  2573--2576.



\bibitem{pl1} P. Li, Geometric Analysis, Cambridge studies in advanced mathematics, 134, Cambridge University Press.




\bibitem{mm88} M. Micallef, J. D. Moore, {\it Minimal two-spheres and the topology of 
manifolds with positive curvature on totally isotropic two-planes}, Ann. Math. 
{\bf 127} (2) (1988), 199--227.

\bibitem{m-w} M. Micallef, M. Y. Wang, {\it Metrics with nonnegative 
isotropic curvature}, Duke Math. J.  {\bf 72} (3) (1993),   649--672.


\bibitem{oba} M. Obata,  {\it Certain conditions for a Riemannian manifold to be isometric with a sphere}, J. Math. Soc. Japan  {\bf 14} (3) (1962),  333--340.

\bibitem{p-w} P. Petersen and W. Wylie, {\it Rigidity of gradient Ricci solitons}, Pacific Jour. Math., {\bf 24} (2) (2009), 329--245.


\bibitem{tas} Y. Tashiro, {\it Complete Riemannian manifolds and some vector fields}, Trans. Amer. Math. Soc.  {\bf 117} (1965), 251--275.

\bibitem{wu} H. Wu, The Bochner Technique in Differential Geometry, in; Mathematical Reports, vol. 3, Pt 2, Harwood Academic Publishing, London, 1987.

\bibitem{pz} P. Zhu, {\it Harmonic two-forms on manifolds with non-negative isotropic curvature}, Ann. Glob. Anal. Geom.  {\bf 40} (2011), 427--434.

\end{thebibliography}
\end{document}